\definecolor{my_color}{rgb}{0,0.5,0.5}
\definecolor{MIXT}{rgb}{0.8,0.5,0.2}
\numberwithin{equation}{section}
\font\tencyr=wncyr10 
\font\tencyi=wncyi10 
\font\tencysc=wncysc10 
\def\rus{\tencyr\cyracc}
\def\rusi{\tencyi\cyracc}
\def\rusc{\tencysc\cyracc}
\newcounter{rmke}
\numberwithin{rmke}{section}
\newtheorem{thm}{Theorem}[section]
\newtheorem{lm}[thm]{Lemma}
\newtheorem{cl}[thm]{Corollary}
\newtheorem{prop}[thm]{Proposition}
\theoremstyle{remark}
\newtheorem{rmk}[thm]{Remark}
\theoremstyle{definition}
\newtheorem{ex}[thm]{Example} 
\newtheorem{opr}{Definition}
\newtheorem{Not}[thm]{Notation}
\newtheorem*{rema}{Remark}
\newtheorem{rme}[rmke]{Remark}
\theoremstyle{plain}
\newtheorem{thme}[rmke]{Theorem}
\newtheorem{lme}[rmke]{Lemma}
\newcommand {\g}{{\mathfrak g}}
\newcommand {\h}{{\mathfrak h}}
\newcommand {\q}{{\mathfrak q}}
\newcommand {\te}{{\mathfrak t}}
\newcommand {\gln}{{\mathfrak{gl}}_n}
\newcommand {\sln}{{\mathfrak{sl}}_n}
\newcommand {\eus}{\EuScript}
\newcommand {\gA}{{\eus A}}
\newcommand {\gI}{{\eus I}}
\newcommand {\gJ}{{\eus J}}
\newcommand {\gS}{{\eus S }}
\newcommand {\ap}{\alpha}
\newcommand {\lb}{\lambda}
\newcommand {\vp}{\varphi}
\newcommand {\N}{{\mathcal N}}
\newcommand {\BT}{{\mathbb T}}
\newcommand {\BV}{{\mathbb V}}
\newcommand {\BU}{{\mathbb U}}
\newcommand {\BZ}{{\mathbb Z}}
\newcommand {\BN}{{\mathbb N}}
\newcommand {\md}{/\!\!/}
\newcommand {\Ad}{{\mathrm{Ad\,}}}
\newcommand {\codim}{{\mathrm{codim\,}}}
\newcommand {\ind}{{\mathsf{ind\,}}}
\newcommand {\Lie}{{\mathsf{Lie\,}}}
\newcommand {\Ker}{\operatorname{Ker}}
\newcommand {\Ima}{{\mathrm{Im\,}}}
\newcommand {\Mor}{\operatorname{Mor}}
\newcommand {\rk}{{\mathrm{rk\,}}}
\newcommand {\rko}{{\mathrm{rk}}}
\newcommand {\spe}{{\mathsf{Spec\,}}}
\newcommand {\tr}{{\mathrm{tr\,}}}
\newcommand {\trdeg}{{\mathrm{tr.deg\,}}}
\newcommand {\tri}{\mathfrak{sl}_2}
\newcommand {\GR}[2]{{\textrm{{\bf #1}}}_{#2}}
\newcommand {\GRt}[2]{{\tilde{\textrm{{\bf #1}}}}_{#2}}
\newcommand {\ov}{\overline}
\newcommand {\un}{\underline}
\newcommand {\sfr}{\mathsf{R}}
\newcommand {\gig}{\mathsf{g.i.g.}}
\newcommand {\ctc}{\textsf{\bfseries C${\cdot}2{\cdot}$C}\ }
\newcommand {\ctrc}{\textsf{\bfseries C${\cdot}3{\cdot}$C}\ }
\newcommand {\cnc}{\textsf{\bfseries C${\cdot}n{\cdot}$C}\ }
\newcommand {\beq}{\begin{equation}}
\newcommand {\eeq}{\end{equation}}
\renewcommand{\le}{\leqslant}
\renewcommand{\ge}{\geqslant}
\newcommand {\bbk}{\Bbbk}
\begin{document}
\setlength{\parskip}{3pt plus 2pt minus 0pt}
\hfill { {\color{blue}\scriptsize May 6, 2017}}
\vskip1ex

\title{Semi-direct products of Lie algebras and covariants}
\author{Dmitri I. Panyushev}
\address[D.P.]%
{Institute for Information Transmission Problems of the R.A.S, Bolshoi Karetnyi per. 19, 
Moscow 127051, Russia}
\email{panyushev@iitp.ru}
\author[O.\,Yakimova]{Oksana S.~Yakimova}
\address[O.Y.]{Institut f\"ur Mathematik, Friedrich-Schiller-Universit\"at Jena,  07737 Jena, 
Deutschland}
\email{oksana.yakimova@uni-jena.de}
\thanks{The research of the first author was carried out at the IITP RAS at the expense of the Russian Foundation for Sciences (project {\rus N0} 14-50-00150).  The second author is partially supported by the DFG priority programme SPP 1388 
``Darstellungstheorie" and by Graduiertenkolleg GRK 1523 ``Quanten- und Gravitationsfelder".}
\keywords{index of Lie algebra, coadjoint representation, symmetric invariants}
\subjclass[2010]{14L30, 17B08, 17B20, 22E46}
\begin{abstract}
The coadjoint representation of a connected algebraic group $Q$ with Lie algebra $\q$ is a thrilling and fascinating object. Symmetric invariants of $\q$ (= $\q$-invariants in the symmetric algebra $S(\q)$) can be considered as a first approximation to the understanding of the coadjoint action $(Q:\q^*)$ and coadjoint orbits. In this article, we study a class of non-reductive Lie algebras, where the description of the symmetric invariants is possible and the coadjoint representation has a number of nice invariant-theoretic properties. If $G$ is a semisimple group with Lie algebra $\g$ and $V$ is $G$-module, then we define $\q$ to be the semi-direct product of $\g$ and $V$. Then we are interested in the case, where the generic isotropy group for the $G$-action on $V$ is reductive and commutative. It turns out that in this case symmetric invariants of $\q$ can be constructed via certain $G$-equivariant maps from $\g$ to $V$ ("covariants").

\end{abstract}
\maketitle


\section*{Introduction}

\noindent
The coadjoint representation of an algebraic group $Q$ is a thrilling and fascinating object. It encodes 
information about many other representations of $Q$ and $\q=\Lie Q$. Yet, it is a very difficult object to 
study.  Symmetric invariants of $\q$ can be considered as a first approximation to the understanding of 
the coadjoint action $(Q:\q^*)$ 
and coadjoint orbits. The goal of this article is to describe and study a class of non-reductive Lie algebras, 
where the description of the symmetric invariants is possible and the 
coadjoint representation has a number of nice invariant-theoretic properties. The ground field $\bbk$ is algebraically closed and of characteristic $0$.

Let $G\to GL(V)$ be a (finite-dimensional rational) representation of a connected algebraic group $G$ with 
$\Lie G=\g$. We form a new Lie algebra $\q$ as the semi-direct product $\q=\g\ltimes V^*$, where 
$V^*$ is an abelian ideal. Then $Q=G\times V^*$ can be regarded as a connected algebraic group with 
$\Lie Q=\q$, where $1\ltimes V^*$ is a commutative unipotent normal subgroup. Here $\q^*=\g^*\oplus V$ 
and the algebra of symmetric invariants $\eus S(\q)^Q=\bbk[\q^*]^Q$ contains $\bbk[V]^G$ as a 
subalgebra. But finding the other invariants is a difficult and non-trivial problem. Nevertheless, 
one can use certain $G$-equivariant morphisms $F: V\to \g$ for constructing $Q$-invariants in 
$\bbk[\q^*]$. Our observation is that  if a generic stabiliser for 
$(G:V)$ is toral, then this is usually sufficient for obtaining a generating set for $\bbk[\q^*]^Q$.
\\ 
\indent
For $G$-modules $V$ and $N$, let $\Mor(V,N)$ denote the graded $\bbk[V]$-module of polynomial 
morphisms $F:V\to N$.  There is the natural map $\phi: \Mor(V,\g){\to} \Mor(V,V)$ such that  
$(\phi(F))(v):= F(v){\cdot}v$ for 
$v\in V$. If $F\in\Ker(\phi)$, then one obtains 
a $(1\ltimes V^*)$-invariant polynomial  $\hat F\in\bbk[\q^*]$ by letting 
$\hat F(\xi,v)=\langle F(v),\xi\rangle$ (Lemma~\ref{lm:V-invar}). Furthermore, if $F$ is also $G$-equivariant, then 
$\hat F\in \bbk[\q^*]^Q$.  Likewise, if $\Mor_G(V,N)$ denotes the $\bbk[V]^G$-module of 
$G$-equivariant morphisms ({\it covariants}), then there is the map 
$\Mor_G(V,\g)\stackrel{\phi_G}{\longrightarrow} \Mor_G(V,V)$, which is the 
restriction of $\phi$. Suppose that $G$ is reductive and $H\subset G$ is a generic isotropy group for
$(G:V)$, with $\h=\Lie H$. It is known that $\rk_{\bbk[V]}\Ker(\phi)=\dim \h$~\cite{jac}, and we prove that 
$\rk_{\bbk[V]^G}\Ker(\phi_G)=\dim \h^H$ whenever the action $(G:V)$ is stable (Theorem~\ref{thm:rangi}). 
Hence $\rk_{\bbk[V]}\Ker(\phi)=\rk_{\bbk[V]^G}\Ker(\phi_G)$ if
and only if the adjoint representation of $H$ is trivial; in particular, $\h$ must be toral. The main 
hope behind our considerations is that if $\Ker(\phi)$ is generated by $G$-equivariant morphisms, 
then $\bbk[V]^G$ and the polynomials $\hat F$ with $F\in \Ker(\phi_G)$ together generate the whole 
ring $\bbk[\q^*]^Q$. Actually, we prove this under certain additional constraints, see below.
For our general theorems, we also  
need the {\it codimension-2 condition} (=\,\textsf{\bfseries C${\cdot}2{\cdot}$C}) on the set  $V_{\sf reg}$ of $G$-regular elements in $V$. This 
means that $V\setminus V_{\sf reg}:=\{v\in V\mid \dim G{\cdot}v \text{ is not maximal}\}$ 
does not contain divisors.
\\ \indent
Our results concern the case in which $G$ is semisimple and \ctc holds for $(G:V)$.
Suppose that there are linearly independent homogeneous morphisms
$F_1,\dots,F_l\in \Ker(\phi)$ such that $l=\dim \h$ and  $\sum_i \deg F_i=\dim V-q(V\md G)$, where
$q(V\md G)$ is the minus degree of the Poincar\'e series of $\bbk[V]^G$.
Then we prove that $\Ker(\phi)$ is a free $\bbk[V]$-module with basis $F_1,\dots,F_l$ and
$\bbk[\q^*]^{1\ltimes V^*}\simeq \bbk[V][\hat F_1,\dots,\hat F_l]$ is a polynomial ring 
(Theorem~\ref{thm:main1}). Under certain additional assumptions (namely, $\h=\h^H$ and $H$ is 
not contained in a proper normal subgroup of $G$), we then prove that such $F_1,\dots,F_l$ are 
necessarily $G$-equivariant and hence $\Ker(\phi_G)$ is a free $\bbk[V]^G$-module and 
$\bbk[\q^*]^{Q}\simeq \bbk[V]^G[\hat F_1,\dots,\hat F_l]$. Furthermore, if $\bbk[V]^G$ is a polynomial ring, then the Kostant (regularity) criterion holds for $\q$ (Theorem~\ref{thm:main2}). In case $\dim\h=1$, our results are stronger and more precise, see Theorem~\ref{thm:l=1}.
\\ \indent
Using Elashvili's classification \cite{alela1,alela2}, one can write down the arbitrary representations 
of simple groups and irreducible representations of arbitrary semisimple groups with toral generic 
stabilisers. We then demonstrate that for most of these representations, the assumptions of our 
general theorems are satisfied. In each example, an emphasise is made on an explicit construction 
of morphisms $F_1,\dots,F_l$ and verification that they belong to $\Ker(\phi)$. In some cases, the 
construction is rather intricate and involved, cf. Examples~\ref{ex:tri-SL} and \ref{ex:sl-2-slag}.

The structure of the paper is as follows.
In Section~\ref{sect:prelim}, we gather some standard well-known facts on semi-direct products, 
regular elements, and generic stabilisers. In Section~\ref{sect:rank},  we consider the $\bbk[V]$-module 
of  polynomial morphisms $\Mor(V,\g)$ and the associated exact sequence
$0\to\Ker(\phi)\to \Mor(V,\g)\stackrel{\phi}{\to} \Mor(V,V)$.
We also compute the rank of the $\bbk[V]^G$-module  $\Ker(\phi_G)$.
Section~\ref{sect:semi} is the heart of the article. Here we present our main results on semi-direct 
products related to the case in which the \ctc holds for $(G:V)$, a generic stabiliser $\h$ for $(G:V)$ 
is toral, and there are linearly independent morphisms $F_1,\dots,F_l\in \Ker(\phi)$ such that 
$l=\dim \h$ and $\sum_{i=1}^l \deg F_i=\dim V-q(V\md G)$.
In Section~\ref{sect:codim2}, we explain how to verify that  the \ctc holds for a $G$-module $V$.
Examples of representations with toral generic stabilisers are presented in 
Sections~\ref{sect:primery} and \ref{sect:primery2}. For each example, we explicitly construct 
the morphisms $F_1,\dots,F_l$ such that the assumptions of our theorems from Section~\ref{sect:semi} 
are satisfied. Our results are summarised in Appendix~\ref{sect:tables}, where we provide tables of the representations with toral generic stabilisers.

This is a part of a general project initiated by the second author~\cite{Y}:  to classify all semi-direct products $\q=\g\ltimes V^*$ with semisimple $\g$ such that the ring  $\bbk[\q^*]^Q$ is polynomial.

{\sl \un{ Notation.}}
If an algebraic group $G$ acts on an irreducible affine variety $X$, then $\bbk[X]^G$ 
is the algebra of $G$-invariant regular functions on $X$ and $\bbk(X)^G$
is the field of $G$-invariant rational functions. If $\bbk[X]^G$
is finitely generated, then $X\md G:=\spe \bbk[X]^G$, and
the {\it quotient morphism\/} $\pi_{X,G}: X\to X\md G$ is induced by
the inclusion $\bbk[X]^G \hookrightarrow \bbk[X]$. If $X=V$ is a $G$-module, then $\N_G(V):=
\pi_{V,G}^{-1}(\pi_{V,G}(0))$ is the null-cone in $V$.
Whenever the ring $\bbk[X]^G$ is graded polynomial, 
the elements of any set of algebraically independent homogeneous generators 
will be referred to as {\it basic invariants\/}.
For a $G$-module  $V$ and $v\in V$, $\g_v=\{s\in\g\mid s{\cdot}v=0\}$ is the {\it stabiliser\/} of 
$v$ in $\g$ and $G_v=\{g\in G\mid g{\cdot}v=v\}$ is the {\it isotropy group\/} of $v$ in $G$.
\\ 
\textbullet \quad See also an explanation of the multiplicative (highest weight) notation for representations of semisimple groups in~\ref{notation-phi}.

\section{Preliminaries}
\label{sect:prelim}

\noindent
Let $G$ be a connected affine algebraic group with Lie algebra $\g$. The symmetric algebra 
$\gS (\g)$ is identified with the algebra of polynomial functions on $\g^*$ and we also write 
$\bbk[\g^*]$ for it.  
The algebra $\gS (\g)$ has the natural Poisson structure $\{\ ,\ \}$ such that
$\{x,y\}=[x,y]$ for $x,y\in\g$. A subalgebra $\gA\subset \gS (\g)$ is said to be {\it Poisson-commutative}, if it 
is a subalgebra in the usual (associative-commutative) sense and also $\{f,g\}=0$ for all $f,g\in\eus A$. The algebra of invariants $\gS (\g)^G=\bbk[\g^*]^G$ is the centraliser of $\g$ w.r.t $\{\ ,\ \}$, therefore it is the Poisson-centre of $\gS (\g)$.

\begin{opr}
 The index of $\g$, denoted $\ind\g$,  is $\min_{\xi\in\g^*}\dim\g_\xi$, where $\g_\xi$ is the stabiliser of $\xi$ with respect to the coadjoint representation of $\g$.
\end{opr}
\noindent
Set $b(\g)=(\dim\g+\ind \g)/2$. If $\g$ is reductive, then  
$\ind\g=\rk\g$ and $b(\g)$ equals the dimension of a Borel subalgebra. 
If 
$\eus A\subset \gS (\g)$ is Poisson-commutative, then 
\beq   \label{eq:ub}
\trdeg\eus A\le b(\g) .
\eeq

It is also known that this upper bound is always attained.

Let $V$ be a (finite-dimensional rational) $G$-module. 
The set of  $G$-{\it regular\/} elements of $V$ is defined to be
\[
   V_{\sf reg}=\{ v\in V\mid \dim G{\cdot}v \ge \dim G{\cdot}v' 
   \text{ for all } v' \in V\} \ .
\]
As is well-known, $V_{\sf reg}$ is a dense open subset of $V$ \cite{VP}. In particular, $\g^*_{\sf reg}$ is the set of $G$-regular elements w.r.t. the coadjoint representation of $G$.

\begin{opr}   \label{def:codim2}
We say that the {\it codimension-$n$ condition} (=\,\cnc) holds for the action $(G:V)$, if
$\codim_V (V\setminus V_{\sf reg})\ge n$.
\end{opr}%

\noindent
Suppose that $\trdeg \gS (\g)^G=\ind\g (=:l)$. Then
$\max_{\xi\in\g^*}\dim G\xi=\dim\g-l$. 
For any $f\in \gS(\g)$, let $(\textsl{d}f)_\xi\in\g$ denote the differential of $f$ at $\xi$.
We say that $\g$ satisfies {\it the Kostant (regularity) criterion\/} if the following properties hold for $\gS (\g)^G$ and
$\xi\in\g^*$:
\begin{itemize}
\item $\gS (\g)^G=\bbk[f_1,\dots,f_l]$ is a graded polynomial ring (with basic invariants $f_1,\dots,f_l$);
  \item $\xi\in\g^*_{\sf reg}$ if and only if $(\textsl{d}f_1)_\xi,\dots,(\textsl{d}f_l)_\xi$ are linearly
independent.
\end{itemize}
A very useful fact is that if \ctc holds for $(G:\g^*)$,  $\trdeg \gS (\g)^G=\ind\g=l$, and
there are algebraically independent $f_1,\dots,f_l\in \gS (\g)^G$ such that
$\sum_{i=1}^l \deg f_i=b(\g)$, then $f_1,\dots,f_l$ freely generate $\gS (\g)^G$ 
and the Kostant criterion holds for
$\g$, see~\cite[Theorem\,1.2]{coadj}.

{\it\bfseries Example.} If $\g$ is reductive and nonabelian, then 
$\codim (\g\setminus \g_{\sf reg})=3$. Hence the (co)adjoint representation of a 
reductive Lie algebra satisfies the \ctrc.

\noindent
For a $G$-module $V$, 
the vector space $\g\oplus V^*$ has a natural structure of Lie algebra, the {\it semi-direct product 
of\/ $\g$ and $V^*$}.
Explicitly, if $x,x'\in \g$ and $\zeta,\zeta'\in V^*$, then
\[
   [(x,\zeta), (x',\zeta')]=([x,x'], x{\cdot}\zeta'-x'{\cdot}\zeta) \ .
\]
This Lie algebra is denoted by $\q=\g\ltimes V^*$, and $V^*\simeq \{(0,\zeta)\mid \zeta\in V^*\}$ 
is an abelian ideal of $\q$. The corresponding connected algebraic group $Q$ is the semi-direct 
product of $G$ and the commutative unipotent group $\exp(V^*)\simeq V^*$. 
The group $Q$ can be identified with  $G\times V^*$,  the product being given by
\[
    (s,\zeta)(s',\zeta')= (ss', (s')^{-1}{\cdot}\zeta+\zeta'), \ \text{ where } \ s,s'\in G .
\]
In particular,  $(s,\zeta)^{-1}=(s^{-1}, -s{\cdot}\zeta)$. Then $\exp(V^*)$ can be identified with
$1\ltimes V^*:=\{(1,\zeta)\mid \zeta\in V^*\} \subset G\ltimes V^*$.
If $G$ is reductive, then the subgroup $1\ltimes V^*$ is the unipotent radical of $Q$, also denoted
by $R_u(Q)$.

Let $\mu: V\times V^* \to \g^*$ be the {\it moment map}, i.e., 
$\mu(v,\zeta)(g):=\langle \zeta, g{\cdot}v\rangle$, where $g\in\g$ and $\langle\ ,\ \rangle$ is the pairing  of $V$ and $V^*$. The restriction of the coadjoint representation of $Q$ to $1\ltimes V^*$ is explicitly described as follows. If $\zeta\in V^*$ and $\eta=(\xi,v)\in \q^*=\g^*\times V$, then
\beq    \label{eq:coadj-V}
   (1\ltimes \zeta){\cdot}\eta=(\xi+\mu(v,\zeta), v) .
\eeq
Since $\mu(v,\zeta)=0$ if and only if $\zeta\in (\g{\cdot}v)^\perp$, the maximal dimension of the
$(1\ltimes V^*)$-orbits in $\q^*$ equals $\max_{v\in V}\dim (\g{\cdot}v)=\dim\g-\min_{v\in V}\dim \g_v$.

\begin{lm}   \label{lm:vspomogat}
 For $\q=\g\ltimes V^*$. There is a dense open subset $\tilde\Omega\in V_{\sf reg}$ such that for 
 any $x\in\tilde \Omega$
 \begin{itemize}
\item[\sf (i)] \   $b(\q)=\dim V +b(\g_x)$;
\item[\sf (ii)] \  $\trdeg (\bbk[\q^*]^{1\ltimes V^*})=\dim V+\dim\g_x$.
\end{itemize}
\end{lm}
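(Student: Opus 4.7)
The plan is to derive both statements by a direct computation of coadjoint stabilisers and orbits of $\q$ on $\q^*$, essentially recovering Ra\"is's formula for the index of a semi-direct product.

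For (ii), I would use formula (\ref{eq:coadj-V}) directly. The subgroup $1\ltimes V^*$ preserves the projection $\q^*\to V$, and on each fibre $\g^*\times\{v\}$ it acts by translation by the image of the linear map $\zeta\mapsto\mu(v,\zeta)$. This image is $(\g_v)^\perp\subset\g^*$, of dimension $\dim\g-\dim\g_v$, as already noted just before the lemma. Hence every $(1\ltimes V^*)$-orbit through $(\xi,v)$ has dimension $\dim\g-\dim\g_v$, which is maximal precisely when $v\in V_{\sf reg}$. A standard argument (Rosenlicht, or directly by noting that $(\xi,v)\mapsto(\xi|_{\g_v},v)$ realises the invariant-theoretic quotient over the principal $G$-stratum) then gives $\trdeg\bbk[\q^*]^{1\ltimes V^*}=\dim\q^*-(\dim\g-\dim\g_x)=\dim V+\dim\g_x$ for $x\in V_{\sf reg}$.

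For (i), I would compute the full $\q$-stabiliser of $(\xi,v)\in\q^*$. The Lie bracket of $\q$ yields
\[
\ads_{(y,\zeta)}(\xi,v)=\bigl(\ads_y\xi-\mu(v,\zeta),\; y\cdot v\bigr),
\]
so $(y,\zeta)\in\q_{(\xi,v)}$ iff $y\in\g_v$ and $\ads_y\xi=\mu(v,\zeta)$. For $y\in\g_v$, the second equation is solvable iff $\ads_y\xi\in(\g_v)^\perp$, equivalently $\bar\xi([y,z])=0$ for all $z\in\g_v$ with $\bar\xi:=\xi|_{\g_v}\in\g_v^*$, that is, $y\in(\g_v)_{\bar\xi}$. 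When solvable, $\zeta$ is unique modulo $\ker\mu(v,\cdot)=(\g{\cdot}v)^\perp$, of dimension $\dim V-\dim\g+\dim\g_v$. Therefore
\[
\dim\q_{(\xi,v)}=\dim(\g_v)_{\bar\xi}+\dim V-\dim\g+\dim\g_v .
\]
Since the restriction $\g^*\twoheadrightarrow\g_v^*$ is surjective, the preimage of $(\g_v^*)_{\sf reg}$ is a dense open subset of $\g^*$, and on it $\dim(\g_v)_{\bar\xi}=\ind\g_v$. Restricting $v$ to the principal $G$-stratum inside $V_{\sf reg}$ fixes the conjugacy class of $\g_v$, so $\dim\g_v=\dim\g_x$ and $\ind\g_v=\ind\g_x$ hold uniformly there. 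Minimising then gives Ra\"is's formula
\[
\ind\q=\ind\g_x+\dim V-\dim\g+\dim\g_x,
\]
and combined with $\dim\q=\dim\g+\dim V$ this produces $b(\q)=(\dim\q+\ind\q)/2=\dim V+b(\g_x)$.

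Taking $\tilde\Omega$ to be the principal $G$-stratum in $V_{\sf reg}$ therefore proves both (i) and (ii). There is no serious technical obstacle: everything reduces to the elementary image/kernel duality $\operatorname{im}\mu(v,\cdot)=(\g_v)^\perp$ and $\ker\mu(v,\cdot)=(\g{\cdot}v)^\perp$, together with standard orbit/stabiliser dimension bookkeeping and the openness of the $\g_v$-regular locus in $\g_v^*$.
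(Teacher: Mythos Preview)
Your argument is correct and follows essentially the same route as the paper: part (ii) is identical (compute the $(1\ltimes V^*)$-orbit dimensions from \eqref{eq:coadj-V} and invoke Rosenlicht), while for part (i) the paper simply \emph{cites} Ra\"is's index formula \cite{rais}, whereas you rederive it from scratch by computing $\dim\q_{(\xi,v)}$ explicitly. Your derivation is a standard proof of Ra\"is's formula, so the two arguments differ only in that yours is self-contained.

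One minor caution: you take $\tilde\Omega$ to be the principal $G$-stratum, which presupposes the existence of a generic stabiliser. The lemma as stated does not assume $G$ reductive, and Ra\"is's result does not need this hypothesis either---it only needs a dense open set on which $\dim\g_v$ and $\ind\g_v$ are simultaneously minimal, which follows from upper semicontinuity of $(\xi,v)\mapsto\dim\q_{(\xi,v)}$ without requiring the $\g_v$ to be conjugate. In the paper's applications $G$ is semisimple, so this is harmless, but if you want the argument to match the generality of the lemma you should replace ``principal $G$-stratum'' by the open locus just described.
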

\begin{proof}
(i) \ By \cite{rais}, there is a dense open subset  $\tilde\Omega\in V_{\sf reg}$ such that
$\ind\q=\dim V-\max_{v\in V} \dim\g{\cdot}v +\ind \g_x=
\dim V-\dim\g+\dim\g_x+\ind\g_x$ for any $x\in\tilde\Omega$. This yields the desired formula for $b(\q)$.

(ii) By Rosenlicht's theorem~\cite[2.3]{VP}, 
\[
\trdeg (\bbk[\q^*]^{1\ltimes V^*})=\dim\q-\max_{\eta\in\q^*}
\dim\bigl( (1\ltimes V^*){\cdot}\eta\bigr)=\dim\q-\dim\g+\dim\g_x . \qedhere
\]
\end{proof}

It follows from this lemma that $\trdeg (\bbk[\q^*]^{1\ltimes V^*})\ge b(\q)$ and the equality holds 
if and only if $\ind\g_x=\dim\g_x$, i.e., $\g_x$ is abelian for generic elements of $V$. 
By \cite{Y16}, if there is a dense open subset $\tilde\Omega$ of $V$ such that $\g_x$ is abelian for 
all $x\in \tilde\Omega$, then $\eus P:=\bbk[\q^*]^{1\ltimes V^*}$ is Poisson-commutative.
Having in mind the general upper bound \eqref{eq:ub}, we conclude that in such a case  $\eus P$ is 
a Poisson-commutative subalgebra of $\bbk[\q^*]$ of maximal dimension.  Moreover, since $\eus P$ is the centraliser of $V^*$ in $(\gS (\q),\ \{\ ,\ \})$, it is also a maximal Poisson-commutative subalgebra, cf.~\cite[Theorem\,3.3]{BSM14}.

We say that the action $(G{:}V)$ {has a generic stabiliser}, if there exists
a dense open subset $\Omega\subset V$ such that all stabilisers $\g_v$, $v\in \Omega$, are 
$G$-conjugate. Then any subalgebra $\g_v$, $v\in\Omega$, is called a {\it generic stabiliser}
(=\,\textsf{g.s.}).
Likewise, one defines  a {\it generic isotropy group} (=\,$\gig$), 
which is a subgroup of $G$. By~\cite[\S\,4]{Ri}, the linear action $(G:V)$ has a generic stabiliser if and only if it has a generic isotropy group. It is also known that $\gig$ always exists if $G$ is reductive.
A systematic treatment of generic stabilisers in the context of reductive group
actions can be found in \cite[\S 7]{VP}.

\section{On the rank of certain modules of covariants}
\label{sect:rank}

\noindent
For finite-dimensional $\bbk$-vector spaces $V$ and $N$, let $\Mor(V,N)$ denote the set of 
polynomial morphisms $F: V\to N$. Clearly, $\Mor(V,N)\simeq \bbk[V]\otimes N$ and it is a free graded 
$\bbk[V]$-module of rank $\dim N$. Here $\deg F=d$, if $F(tv)=t^dF(v)$ for any $t\in \bbk^\times$ and 
$v\in V$.

If both $V$ and $N$ are $G$-modules, then $G$ acts on $\Mor(V,\g)$ by $(g\ast F)(v)=g(F(g^{-1}v))$. 
Therefore, $g\ast F=F$ for all $g\in G$ if and only 
if $F$ is $G$-equivariant. Write $\Mor_G(V,N)$ for the set
of $G$-{\it equivariant} polynomial morphisms $V\to N$. It is also called the 
{\it module of covariants of type\/} $N$. We have 
$\Mor_G(V,N)\simeq (\bbk[V]\otimes N)^G$. In the rest of the section, we assume that $G$ is reductive. 
Then $\Mor_G(V,N)$ is a finitely generated $\bbk[V]^G$-module, see e.g. \cite[3.12]{VP}.

Given a $G$-module $V$, consider the exact sequence of $\bbk[V]$-modules
\[
   0\to\Ker(\phi)\to \Mor(V,\g)\stackrel{\phi}{\to} \Mor(V,V)\ ,
\]
where $\phi(F)(v):= F(v){\cdot}v$ for $F\in \Mor(V,\g)$ and $v\in V$.
Therefore, 
\[
    \Ker(\phi)=\{ F\in \Mor(V,\g)\mid F(v)\in \g_v \quad\forall v\in V\} \ .
\]
Here $\rk\phi=\max_{v\in V}\dim \g{\cdot}v$ \cite[Prop.\,1.7]{jac} and hence
$\rk \Ker(\phi)=\min_{v\in V}\dim\g_v$. Recall that if $R$ is a domain and $M$ is a finitely generated 
$R$-module, then the rank of $M$ is $\rk M=\rko_R(M)=\dim_{\mathsf{Quot}(R)}M\otimes 
{\mathsf {Quot}(R)}$. 
\\ \indent
We also consider the ``equivariant sequence'' that comprises $\bbk[V]^G$-modules:
\[
   0\to\Ker(\phi_G) \to \Mor_G(V,\g)\stackrel{\phi_G}{\longrightarrow} \Mor_G(V,V) \ .
\]
Here $\phi_G$ is the restriction of $\phi$ to $\Mor_G(V,\g)$. We are interested in conditions under which the $\bbk[V]$-module $\Ker(\phi)$ is generated by $G$-equivariant morphisms. In other words, when is it true that $\Ker(\phi)\simeq \bbk[V]\otimes_{\bbk[V]^G} \Ker(\phi_G)$ ?

If $H$ is a generic isotropy group  for $(G:V)$ and $\h=\Lie H$, then
we write $\h=\mathsf{g.s.}(\g:V)$ and $H=\gig(G:V)$ for this. Then $\min_{v\in V}\dim \g_v=\dim\h$ and hence
\beq    \label{eq:rk-ker-hat-phi}
    \rk \Ker(\phi)=\dim\h .
\eeq
Recall that the $G$-action on $V$ is said to be {\it stable}, if
the union of closed $G$-orbits is dense in $V$, see \cite[\S\,7]{VP}. Then
 $H$ is a reductive (not necessarily connected) group.
By a general result of Vust \cite[Chap.\,III]{vust}, if the action $(G:V)$ is stable, then
\begin{equation}         \label{vust-rang}
\text{the rank of the $\bbk[V]^G$-module $\Mor_G(V,N)$ equals $\dim N^H$}.
\end{equation}

For the reader's convenience, we outline a proof:
\\ \indent
\textbullet \quad If $F$ is $G$-equivariant, then $F(v)\in N^{G_v}$ for any $v\in V$. Applying this to the open set of $G$-generic elements in $V$, we obtain that $\rk \Mor_G(V,N)\le \dim N^H$.
\\ \indent
\textbullet \quad On the other hand, the "evaluation" map 
$\epsilon_v: \Mor_G(V,N) \to N^{G_v}$, $F\mapsto F(v)$, is onto whenever $G{\cdot}v=\ov{G{\cdot}v}$, see \cite[Theorem\,1]{indag02}. Hence if generic $G$-orbits in $V$ are closed (and isomorphic to $G/H$), then the upper bound $\dim N^H$ is attained.

Our goal is to compute the rank of the $\bbk[V]^G$-module $\Ker(\phi_G)$.

\begin{thm}      \label{thm:rangi}  
If the action $(G{:}V)$ is stable and $H=\gig(G{:}V)$, then $\rk\Ker(\phi_G)=\dim \h^H$.
\end{thm}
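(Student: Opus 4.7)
The plan is to reduce the computation to the evaluation of $\phi_G$ at a single generic point, exploiting Vust's rank formula \eqref{vust-rang}. Since $\bbk[V]^G$ is a domain, rank is additive on the short exact sequence
\[
0\to \Ker(\phi_G)\to \Mor_G(V,\g)\stackrel{\phi_G}{\to} \Ima(\phi_G)\to 0,
\]
so by \eqref{vust-rang} one obtains $\rk\Ker(\phi_G)=\dim\g^H-\rk\Ima(\phi_G)$, reducing the theorem to the claim $\rk\Ima(\phi_G)=\dim\g^H-\dim\h^H$.

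Stability of $(G{:}V)$ allows one to pick a generic $v_0\in V$ with $G{\cdot}v_0$ closed and $G_{v_0}=H$. By \cite[Thm.\,1]{indag02}, both evaluations $\epsilon_{v_0}\colon \Mor_G(V,\g)\twoheadrightarrow \g^H$ and $\epsilon_{v_0}\colon \Mor_G(V,V)\twoheadrightarrow V^H$ are surjective, and $\phi_G$ is compatible with them in the commutative square
\[
\begin{array}{ccc}
\Mor_G(V,\g) & \stackrel{\phi_G}{\longrightarrow} & \Mor_G(V,V) \\
\downarrow \epsilon_{v_0} & & \downarrow \epsilon_{v_0} \\
\g^H & \stackrel{\psi}{\longrightarrow} & V^H,
\end{array}
\]
where $\psi(x)=x{\cdot}v_0$. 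The kernel is $\Ker(\psi)=\g^H\cap\g_{v_0}=\g^H\cap\h=\h^H$, so $\dim\Ima(\psi)=\dim\g^H-\dim\h^H$.

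To turn this fibre computation into the desired rank over $\bbk[V]^G$, I would tensor the defining exact sequence with the flat extension $K=\bbk(V)^G$, obtaining $\Ker(\phi_G)\otimes_{\bbk[V]^G}K=\Ker(\phi_G\otimes K)$ sitting inside a $K$-linear map between free $K$-modules of ranks $\dim\g^H$ and $\dim V^H$ by Vust. The rank of $\phi_G\otimes K$ equals the rank of any of its specialisations at a point of $V\md G$ where the two modules of covariants are locally free of the expected ranks; at $\pi_{V,G}(v_0)$ this specialisation is exactly the map $\psi$, via $\epsilon_{v_0}$. Hence $\dim_K\Ker(\phi_G\otimes K)=\dim\g^H-\dim\Ima(\psi)=\dim\h^H$, as desired. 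The main obstacle is precisely this last generic-freeness step --- making rigorous that evaluation at a generic closed orbit computes the rank of the sub- and quotient-modules. The cleanest way to handle it is via Luna's \'etale slice theorem: stability implies that $V$ is \'etale-locally $G\times_H N$ near $G{\cdot}v_0$ for the slice $H$-representation $N$, which trivialises the covariant modules and presents $\phi_G$ locally as $\psi$ with coefficients in $\co_{V\md G}$.
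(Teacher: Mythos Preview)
Your argument is correct and takes a genuinely different route from the paper's. You localise at a single generic closed orbit $G\cdot v_0$, identify the fibre of $\phi_G$ over $\pi_{V,G}(v_0)$ with the linear map $\psi\colon\g^H\to V^H$, $x\mapsto x\cdot v_0$ (via the surjective evaluations $\epsilon_{v_0}$), and then invoke generic freeness---or the \'etale slice theorem---to conclude that $\rk\psi$ equals the generic rank of $\phi_G$. The paper instead restricts globally to $V^H$: by the Luna--Richardson theorem $\bbk[V]^G\simeq\bbk[V^H]^W$ with $W=N_G(H)/H$, and the restriction maps $\Mor_G(V,\g)\to\Mor_W(V^H,\g^H)$ and $\Mor_G(V,V)\to\Mor_W(V^H,V^H)$ are injective (because $\ov{G{\cdot}V^H}=V$) between modules of the same rank (by Vust, since $\gig(W{:}V^H)$ is trivial), hence become isomorphisms over the fraction field. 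Thus $\rk\Ker(\phi_G)=\rk\Ker(\psi_W)$, and one identifies $\Ker(\psi_W)$ with $\Mor_W(V^H,\h^H)$, of rank $\dim\h^H$ by one more application of Vust. Your approach is more direct and fibre-wise, at the cost of the generic-freeness bookkeeping you flag (one must also choose $v_0$ so that $\pi_{V,G}(v_0)$ lies in the open locus where the bundle map has its maximal rank, not merely where the modules are locally free). The paper's approach trades that localisation argument for the Luna--Richardson machinery, which reduces to an action with trivial generic stabiliser where Vust's formula finishes the computation cleanly.
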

\begin{proof}
The reductive group $W=N_G(H)/H$ acts on $V^H$.
By the Luna-Richardson theorem \cite{lr79}, the restriction homomorphism
$\bbk[V]\to \bbk[V^H]$ induces an isomorphism of rings of invariants
$\bbk[V]^G\simeq\bbk[V^H]^W$. This common ring will be denoted by $\mathfrak J$.
Consider the commutative diagram of $\mathfrak J$-modules
\[
\begin{array}{ccccccc}
 0 &\to&\Ker(\phi_G) &\to &\Mor_G(V,\g)&\stackrel{\phi_G}{\longrightarrow}& 
 \Mor_G(V,V)\\
    &    &     &        & \downarrow &    & \downarrow   \\
0 &\to&\Ker(\psi_W) &\to &\Mor_W(V^H,\g^H)&\stackrel{\psi_W}{\longrightarrow}& 
\Mor_W(V^H,V^H),
\end{array}
\]
where the vertical arrows denote the restriction
of  $G$-equivariant morphisms to $V^H\subset V$.
Note that the $W$-module $\g^H$ is not the Lie algebra of $W$.
However, the $\mathfrak J$-module homomorphism $\psi_W$ is being defined
similarly to $\phi_G$.
By construction, the action $(W{:}V^H)$ is again stable and 
has trivial generic isotropy groups. Therefore, using Eq.\,\eqref{vust-rang},
 we conclude that
\begin{gather*}
\rk\Mor_W(V^H,\g^H)=\dim \g^H=\rk\Mor_G(V,\g)   \\
\rk\Mor_W(V^H,V^H)=\dim V^H= \rk\Mor_G(V,V) .
\end{gather*}
Since $H$ is a generic isotropy group, $\ov{G{\cdot}V^H}=V$.
It follows that both vertical arrows are {\it injective\/} homomorphisms of $\mathfrak J$-modules
of equal ranks. Therefore, they give rise to isomorphisms over the field of fractions of $\mathfrak J$ and hence
$\rk \Ker(\psi_W)=\rk \Ker(\phi_G)$. Here
\begin{multline*}
   \Ker(\psi_W)=\{F: V^H\to \g^H \mid  F\ \text{ is $W$-equivariant and }\ 
   F(v)\in (\g^H)_v\ \ \forall v\in V^H\}=\\
 =\{ F \mid F\ \text{ is $W$-equivariant and }\ F(v)\in \h^H\ \ \forall v\in V^H\}
 \simeq \Mor_W(V^H,\h^H).
\end{multline*}
The second equality follows from the fact that $\g_v=\h$ for a generic
$v\in V^H$ and hence $F(v)\in \h$ for any $v\in V^H$.
Since  $\gig(W{:}V^H)=\{1\}$, 
Eq.\,\eqref{vust-rang} implies that 
$\rk \Ker(\psi_W)=\dim\h^H$.
\end{proof}

Comparing Eq.~\eqref{eq:rk-ker-hat-phi} and Theorem~\ref{thm:rangi} provides the following 
necessary condition:

\begin{cl}
If the action $(G:V)$ is stable and the $\bbk[V]$-module $\Ker(\phi)$ is generated by 
$G$-equivariant morphisms, then $\h=\h^H$ (i.e., the adjoint representation of $H$ is trivial).
In particular, $\h$ is a toral subalgebra of $\g$.
\end{cl}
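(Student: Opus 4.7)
My plan is to obtain the equality $\dim\h=\dim\h^H$ by comparing the rank of $\Ker(\phi)$, as a $\bbk[V]$-module, with the rank of $\Ker(\phi_G)$, as a $\bbk[V]^G$-module, and then to extract abelianness and torality from $\h=\h^H$ by a standard Lie-theoretic argument.

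First, I would translate the hypothesis into the assertion that there is a surjection of $\bbk[V]$-modules
\[
\bbk[V]\otimes_{\bbk[V]^G}\Ker(\phi_G)\twoheadrightarrow \Ker(\phi),
\]
obtained from the inclusion $\Ker(\phi_G)\subset \Ker(\phi)$ by extension of scalars. Since both modules are finitely generated over their respective rings, and both $\bbk[V]$ and $\bbk[V]^G$ are integral domains with $\bbk(V)$ faithfully flat over $\bbk(V)^G$ (where we identify fraction fields of the coordinate rings with the function fields), ranks behave as expected: the rank of the tensor product over $\bbk[V]$ equals $\rk_{\bbk[V]^G}\Ker(\phi_G)$. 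By Theorem~\ref{thm:rangi}, this rank is $\dim\h^H$. A surjection of finitely generated modules over a domain forces the rank of the target to be at most the rank of the source, so combining with Eq.\,\eqref{eq:rk-ker-hat-phi} we get $\dim\h\le\dim\h^H$. Since $\h^H\subseteq \h$ by definition, the reverse inequality is immediate, and hence $\h=\h^H$.

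Next I would interpret $\h=\h^H$ Lie-theoretically. The identity $\h=\h^H$ means that the adjoint action of $H$ on $\h$ is trivial. Restricting to the connected component $H^\circ$ and differentiating, the adjoint action of $\h$ on itself is trivial, i.e.\ $[\h,\h]=0$, so $\h$ is abelian. Finally, stability of $(G{:}V)$ was used in the statement to guarantee that a generic isotropy group $H$ is reductive (as recalled just before~\eqref{vust-rang}), so $\h$ is a reductive Lie algebra; a reductive abelian Lie algebra is toral, as desired.

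The only non-routine point is the rank comparison, which is essentially immediate once one notices that the hypothesis is tailor-made to give a $\bbk[V]$-module surjection whose source has rank governed by Theorem~\ref{thm:rangi}; the remainder is a one-line passage from "trivial adjoint action of a reductive group on its own Lie algebra" to "toral". So the corollary really is just a bookkeeping consequence of the two rank formulas $\rk_{\bbk[V]}\Ker(\phi)=\dim\h$ and $\rk_{\bbk[V]^G}\Ker(\phi_G)=\dim\h^H$ established earlier.
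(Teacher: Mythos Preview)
Your proof is correct and follows exactly the approach the paper intends: the corollary is presented there as an immediate consequence of comparing Eq.~\eqref{eq:rk-ker-hat-phi} with Theorem~\ref{thm:rangi}, and you have simply spelled out the rank comparison and the passage from $\h=\h^H$ to torality in more detail than the paper does. One cosmetic remark: you do not actually need the identification of $\mathrm{Quot}(\bbk[V]^G)$ with $\bbk(V)^G$---all that matters is that $\bbk[V]^G\hookrightarrow\bbk[V]$ is an inclusion of domains, so that tensoring up to fraction fields preserves rank.
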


There are several cases in which this condition on $\h$ is also sufficient.

$\bullet$ \ If $(G:V)$ is the isotropy representation of a symmetric variety, then the condition that
$\h$ is toral does imply that $\Ker(\phi)$ is a free\/ $\bbk[V]$-module generated 
by $G$-equivariant morphisms, see \cite[Theorem\,5.8]{coadj}. 

$\bullet$ \ If $H$ is finite, then $\Ker(\phi)$ is a trivial $\bbk[V]$-module.

Next, we provide one more good case. For  $F\in \Mor(V,N)$, let ${\eus V}(F)$ denote the 
set of zeros of $F$. If $\dim N=1$, then $F$ is a polynomial function on $V$ and
${\eus V}(F)$ is a divisor.

\begin{prop}      \label{dim=1}
Suppose that $G$ is semisimple and $\gig(G:V)$ is a one-dimensional torus. 
Then $\Ker(\phi)$ is a free\/ $\bbk[V]$-module of rank 1  generated by 
a $G$-equivariant morphism.
\end{prop}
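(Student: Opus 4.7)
The plan is to produce a nonzero $G$-equivariant element $F_0\in\Ker(\phi)$ via Theorem~\ref{thm:rangi}, then argue that $F_0$ generates the whole rank-one $\bbk[V]$-module $\Ker(\phi)$ by a codimension-two extension argument. The key technical point will be to renormalise $F_0$ so that its zero set has codimension $\ge 2$ without losing $G$-equivariance, and this is where semisimplicity of $G$ enters decisively.

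First, I would observe that since $H\simeq\bbk^\times$ is reductive, the generic $G$-orbit on $V$ is closed (Matsushima), so $(G{:}V)$ is stable. Because $H$ is abelian, it acts trivially on $\h$ via the adjoint representation, hence $\h^H=\h$ is one-dimensional. Theorem~\ref{thm:rangi} then gives $\rk_{\bbk[V]^G}\Ker(\phi_G)=1$, producing a nonzero $F_0\in \Ker(\phi_G)\subset \Mor_G(V,\g)\cap \Ker(\phi)$. At the same time, Eq.~\eqref{eq:rk-ker-hat-phi} gives $\rk_{\bbk[V]}\Ker(\phi)=1$, so any two elements of $\Ker(\phi)$ are proportional over $\bbk(V)$.

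For the renormalisation step, I would let $g_0\in \bbk[V]$ be the gcd of the coordinates of $F_0$ in any basis of $\g$. The zero set $\{F_0=0\}$ is $G$-invariant, so its codimension-one irreducible components are individually preserved by the connected group $G$, hence cut out by $G$-semi-invariant polynomials; since a semisimple $G$ has no nontrivial characters, these polynomials are in fact $G$-invariant. It follows that $g_0\in \bbk[V]^G$, so $F_0/g_0$ is again a $G$-equivariant element of $\Ker(\phi)$, but now with zero locus of codimension $\ge 2$. I would replace $F_0$ by $F_0/g_0$ and work with this normalised choice from now on.

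Finally, for any $F\in \Ker(\phi)$, rank one forces $F=cF_0$ with $c\in \bbk(V)$. The possible poles of $c$ lie in $\{F_0=0\}$, which has codimension $\ge 2$ on the smooth affine variety $V$, so $c\in\bbk[V]$ by normality, and $F\in \bbk[V]\cdot F_0$. This establishes $\Ker(\phi)=\bbk[V]\cdot F_0$ as a free $\bbk[V]$-module of rank one generated by the $G$-equivariant morphism $F_0$, as required.
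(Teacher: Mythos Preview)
Your proof is correct and follows essentially the same route as the paper's: both obtain a nonzero $F_0\in\Ker(\phi_G)$ via Theorem~\ref{thm:rangi}, renormalise it using semisimplicity of $G$ so that its coordinate gcd is a unit (equivalently, so that it is ``primitive'' in the paper's language, or so that its zero locus has codimension $\ge 2$ in yours), and then conclude that every $F\in\Ker(\phi)$ is a $\bbk[V]$-multiple of $F_0$. The only difference is cosmetic: the paper phrases the last step as a coprimality argument, you phrase it as a Hartogs/normality argument.

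One small correction: stability of $(G{:}V)$ does not follow from Matsushima's criterion alone. Matsushima gives that $G/H$ is affine when $H$ is reductive, but an affine orbit in an affine variety need not be closed (e.g.\ $\bbk^\times$ acting on $\bbk^2$ with weights $(1,-1)$: the orbit of $(1,0)$ is affine but not closed). What you need is the fact, specific to \emph{semisimple} $G$, that reductivity of a generic isotropy group implies stability of the linear action; this is the result cited in the paper as \cite[Theorem\,7.15]{VP}. Once you invoke that, the rest of your argument goes through unchanged.
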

\begin{proof}
Since $G$ is semisimple and $\gig$ is reductive, the action $(G:V)$ is stable~~\cite[Theorem\,7.15]{VP}. Hence  
$\rk \Ker(\phi_G)=1$ in view of Theorem~\ref{thm:rangi}. Then we can pick a nonzero homogeneous primitive element 
$F\in \Ker(\phi_G)$, i.e., $F$ cannot 
be written as $f\check F$, where $\check F\in \Ker(\phi_G)$ and $f\in \bbk[V]^G$ with $\deg f>0$. 
Then $F$ is also primitive as element of $\Mor(V,\g)$.
Indeed, assume that $F=f\check F$, where $\check F\in \Mor(V,\g)$, $f\in \bbk[V]$ and
$\deg f>0$. 
Because $F$ is a $G$-equivariant morphism,
${\eus V}(F)$ is $G$-stable. Since  ${\eus V}(f)\subset{\eus V}(F)$ and ${\eus V}(f)$ is a divisor, 
${\eus V}(f)$ is necessarily a $G$-stable divisor
in $V$. Because $G$ is semisimple, $f\in \bbk[V]^G$.
It follows that $\check F\in \Mor_G(V,\g)$. The relation $F=f\check F$ shows that 
$\check F(v)\in \g_v$ for any $v\in V\setminus {\eus V}(f)$.
Hence $\check F(v)\in\g_v$ for any $v\in V$, and this contradicts the primitivity of $F$ in $\Ker(\phi_G)$.

Let $\tilde F\in \Ker(\phi)$\/ be an arbitrary homogeneous element.
Since $\rk\Ker(\phi)=1$, there are coprime homogenous 
$f,\tilde f\in \bbk[V]$ such that 
$fF=\tilde f\tilde F$. If $\deg\tilde f>0$, then ${\eus V}(\tilde f)\subset{\eus V}(F)$ and, as 
in the previous paragraph, this leads to a contradiction.
Thus, $\tilde f$ is invertible, and we are done.
\end{proof}%
Using the theory to be developed in Section~\ref{sect:semi}, we provide a number of non-trivial examples
of representations with toral generic stabilisers such that $\Ker(\phi)$ is generated by $G$-equivariant morphisms, see Sections~\ref{sect:primery} and \ref{sect:primery2}.

\section{Semi-direct products  with good invariant-theoretic properties} 
\label{sect:semi}

\noindent
In this section, we describe a class of representations $(G:V)$ such that $\Ker(\phi)$ is generated by 
$G$-equivariant morphisms, $\q=\g\ltimes V^*$ satisfies the 
Kostant criterion, and $(Q:\q^*)$ has nice invariant-theoretic properties.
\\ \indent
For $F\in\Mor(V,\g)$ and  $\eta=(\xi,v)\in\q^*=\g^*\times V$, we define $\hat F\in\bbk[\q^*]$ by 
$\hat F(\eta):=\langle F(v),\xi\rangle$, where $\langle\ ,\ \rangle$ denote the pairing of dual 
spaces.

\begin{lm}     \label{lm:V-invar}
We have $\hat F\in \bbk[\q^*]^{1\ltimes V^*}$ if and only if $F(v){\cdot}v=0$ for all $v\in V$, i.e., 
$F\in\Ker(\phi)$.
\end{lm}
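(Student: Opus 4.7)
The plan is a direct computation using the explicit formula \eqref{eq:coadj-V} for the restriction of the coadjoint action of $Q$ to $1\ltimes V^*$. Since the subgroup $1\ltimes V^*$ is connected, invariance of the polynomial $\hat F$ under this subgroup is equivalent to equality $\hat F((1\ltimes \zeta)\cdot\eta) = \hat F(\eta)$ for all $\zeta\in V^*$ and $\eta=(\xi,v)\in\q^*$.

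First I would plug $\eta = (\xi,v)$ into the action formula to get $(1\ltimes \zeta)\cdot\eta = (\xi + \mu(v,\zeta), v)$, and then evaluate
\[
  \hat F\bigl((1\ltimes \zeta)\cdot\eta\bigr) = \langle F(v),\, \xi+\mu(v,\zeta)\rangle = \hat F(\eta) + \langle F(v),\mu(v,\zeta)\rangle.
\]
Hence $\hat F$ is $1\ltimes V^*$-invariant if and only if $\langle F(v),\mu(v,\zeta)\rangle = 0$ for all $v\in V$ and all $\zeta\in V^*$.

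Next, unwinding the definition of the moment map, one has $\mu(v,\zeta)(x) = \langle \zeta, x\cdot v\rangle$ for any $x\in\g$. Specialising to $x = F(v)\in\g$ gives
\[
  \langle F(v),\mu(v,\zeta)\rangle = \mu(v,\zeta)\bigl(F(v)\bigr) = \langle \zeta,\, F(v)\cdot v\rangle.
\]
Since $\zeta\in V^*$ is arbitrary and the pairing between $V^*$ and $V$ is non-degenerate, this quantity vanishes for every $\zeta$ precisely when $F(v)\cdot v = 0$. Requiring this for all $v\in V$ is the definition of $F\in\Ker(\phi)$, which concludes the equivalence.

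There is no real obstacle here: the whole argument is a two-line manipulation of \eqref{eq:coadj-V} combined with the definitions of $\hat F$, $\mu$, and $\phi$. The only point that deserves a line of comment is using the connectedness of $1\ltimes V^*$ (so that invariance at the Lie-algebra level or on the whole group is the same thing) in order to restrict to the explicit action formula rather than infinitesimalising.
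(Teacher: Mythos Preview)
Your proof is correct and follows essentially the same route as the paper: use the action formula \eqref{eq:coadj-V} to reduce invariance to $\langle F(v),\mu(v,\zeta)\rangle=0$ for all $\zeta$, then unwind the definition of $\mu$ to rewrite this as $\langle F(v){\cdot}v,\zeta\rangle=0$. The paper's version is terser, omitting the remark on connectedness, but the argument is identical.
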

\begin{proof}
By \eqref{eq:coadj-V}, the invariance with respect to $1\ltimes V^*$ means that
\[
\langle F(v),\xi\rangle=\langle F(v),\xi+\mu(v,\zeta)\rangle,   \quad (\xi,v)\in\q^* ,
\]
for any $\zeta\in V^*$. Hence $0=\langle F(v),\mu(v,\zeta)\rangle=\langle F(v){\cdot}v,\zeta\rangle$, 
and we are done.
\end{proof}

Thus, any $F\in\Ker(\phi)$ gives rise to $\hat F\in \bbk[\q^*]^{1\ltimes V^*}$. Moreover, it is clear that
if $F$ is $G$-equivariant, then $\hat F\in \bbk[\q^*]^Q$. It follows from Eq.~\eqref{eq:coadj-V} that
if $\zeta\in V^*$ is regarded as a linear function on $\q^*=\g^*\times V$, then
$\zeta$ is $1\ltimes V^*$-invariant.  Hence

\textbullet \quad both $\eus S(V^*)=\bbk[V]$ and $\{\hat F\mid F\in \Ker(\phi)\}$ belong to 
$\bbk[\q^*]^{1\ltimes V^*}$;

\textbullet \quad both $\bbk[V]^G$ and $\{\hat F\mid F\in \Ker(\phi_G)\}$ belong to 
$\bbk[\q^*]^{Q}$;
\\
We provide below certain conditions that guarantee us that $\bbk[\q^*]^{1\ltimes V^*}$ and
$\bbk[\q^*]^{Q}$ are generated by the respective subsets.

Recall some properties to the symmetric invariants of semi-direct products: 
\begin{itemize}
\item[\sf (i)] \ The decomposition $\q^*=\g^*\oplus V$ yields a bi-grading of 
$\bbk[\q^*]^Q$~\cite[Theorem\,2.3(i)]{coadj}. The same argument proves that the algebra 
$\bbk[\q^*]^{1\ltimes V^*}$ is also bi-graded.
\item[\sf (ii)] \ The algebra $\bbk[V]^G$ is contained in $\bbk[\q^*]^Q$. Moreover, a minimal generating
system for $\bbk[V]^G$ is a part of a minimal generating system of 
$\bbk[\q^*]^S$~\cite[Sect.\,2\,(A)]{coadj}. In particular, if $\bbk[\q^*]^Q$ is a polynomial ring, then so is 
$\bbk[V]^G$.
\end{itemize}
\begin{rmk}    \label{rem:iz-kota}
Note that $\hat F$ associated with $F\in\Ker(\phi)$ has degree $1$ w.r.t. $\g$. Conversely, it can be 
shown that if $f\in \bbk[\q^*]^{1\ltimes V^*}$ has degree 1 w.r.t. $\g$, then $f=\hat F$ for some 
$F\in\Ker(\phi)$, see \cite[Lemma\,2.1]{Y16}.
In other words, there is a natural bijection $\Ker(\phi) \stackrel{1:1}{\longleftrightarrow} (\g\otimes \bbk[V])^{1\ltimes V^*}$.
It is also true that $\Ker(\phi_G) \stackrel{1:1}{\longleftrightarrow }(\g\otimes \bbk[V])^{G\ltimes V^*}$.
\end{rmk}

If $G\subset GL(V)$ is reductive, then $\bbk[V]^G$ is finitely generated and $q(V\md G)$ 
stands for the minus degree of the Poincar\'e series of the graded algebra $\bbk[V]^G$. More precisely, $\bbk[V]^G=\bigoplus_{j\in \BN} \bbk[V]^G_j$ and its Poincar\'e series
is  
\[
  \eus F(\bbk[V]^G; t)=\sum_{j\in \BN} \dim \bbk[V]^G_j t^j .
\]
Here $\eus F(\bbk[V]^G; t)=P(t)/\tilde P(t)$ is a rational function and, by definition, 
$q(V\md G)=\deg \tilde P-\deg P$. In particular, if $\bbk[V]^G$ is a polynomial ring, then $q(V\md G)$ equals the sum of degrees of the basic invariants. By~\cite[Korollar\,5]{kn86}, if $G$ is semisimple, then
$q(V\md G)\le q(V)=\dim V$. The arbitrary representations of simple algebraic groups and the irreducible representations of semisimple groups such that $q(V\md G)< \dim V$ are classified in~\cite{kl87}.

\noindent
Recall some properties of the linear actions of semisimple groups. If $G\subset GL(V)$ is semisimple, then
\begin{itemize}
\item $\bbk(V)^G$ is the quotient field of $\bbk[V]^G$, hence
    $\max_{v\in V} \dim G{\cdot}v=\dim V-\dim V\md G$~\cite{VP};
\item $(G:V)$ is stable if and only if \ $\gig(G:V)$ is reductive~\cite[Theorem\,7.15]{VP}.
\end{itemize}

\begin{thm}   \label{thm:main1}
Let $G\subset GL(V)$ be semisimple and  $l=\min_{v\in V}\dim\g_v=\dim \gig(G:V)>0$.
Suppose that $\codim(V\setminus V_{\sf reg})\ge 2$ and there are linearly independent 
(over $\bbk[V]$) homogeneous morphisms $F_1,\dots,F_l\in\Ker(\phi)$ such that
\beq   \label{eq:summa}
   \textstyle \sum_{i=1}^l \deg F_i  + q(V\md G)=\dim V .
\eeq
Then 
\begin{itemize}
\item[\sf (i)] \  $F_1(v),\dots, F_l(v)\in \g$ are linearly independent for all $v\in V_{\sf reg}$ 
and $\bigwedge_{i=1}^l F_i: V\to \wedge^m\g$ is $G$-equivariant;
\item[\sf (ii)] \  $\Ker(\phi)$ is a free $\bbk[V]$-module of rank $l$, with basis $F_1,\dots,F_l$;
\item[\sf (iii)] \ $\bbk[\q^*]^{R_u(Q)}=\bbk[V][\hat F_1,\dots,\hat F_l]$, that is,
$\q^*\md {R_u(Q)}\simeq V\times \mathbb A^l$;
\item[\sf (iv)] \ The $\bbk$-linear span of $F_1,\dots,F_l$ (resp. $\hat F_1,\dots,\hat F_l$)  is a $G$-stable subspace of\/ $\Mor(V,\g)$ (resp. $\bbk[\q^*]$).
\end{itemize}
\end{thm}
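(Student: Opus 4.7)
The plan is to exploit the codimension-$2$ hypothesis to reduce everything to a reflexive-module calculation on $V_{\sf reg}$, then use the degree equality \eqref{eq:summa} to force a determinant identification that yields (i); the remaining parts should then follow formally.

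First, since $\q^* = \g^* \times V$ is smooth and $\codim(V \setminus V_{\sf reg}) \ge 2$, Hartogs gives $\bbk[\q^*]^{R_u(Q)} = \bbk[\q^*|_{V_{\sf reg}}]^{R_u(Q)}$, so I may work over $V_{\sf reg}$. There the $R_u(Q)$-orbits have constant dimension $\dim\g - l$ (namely the cosets $\xi + (\g_v)^\perp$ in $\g^*\times\{v\}$), and the geometric quotient $\q^*|_{V_{\sf reg}}/R_u(Q)$ is the rank-$l$ vector bundle $\eus F$ with fiber $\g^*/(\g_v)^\perp \simeq \g_v^*$; let $\eus E$ be the dual rank-$l$ bundle on $V_{\sf reg}$ with fiber $\g_v$. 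Via Remark~\ref{rem:iz-kota}, the $\g^*$-degree-$d$ piece of $\bbk[\q^*]^{R_u(Q)}$ identifies with $\Gamma(V_{\sf reg}, \mathrm{Sym}^d\eus E)$; in particular $\Ker(\phi) \simeq \Gamma(V_{\sf reg}, \eus E)$ as $\bbk[V]$-modules.

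The heart of the argument will be (i). Note $\Ker(\phi)$ is reflexive (as the kernel of a map of free modules over the smooth, hence Cohen--Macaulay, ring $\bbk[V]$), so $\det \Ker(\phi) = \wedge^l \Ker(\phi)$ is a rank-$1$ reflexive $\bbk[V]$-module, hence free with a homogeneous generator $\Omega$. My key numerical input will be the identity
\[
\deg \Omega = \dim V - q(V\md G),
\]
which I plan to extract from a Hilbert-series computation on the four-term exact sequence $0 \to \Ker(\phi) \to \Mor(V,\g) \xrightarrow{\phi} \Mor(V,V) \to \coker\phi \to 0$, by relating the Poincar\'e series of $\coker\phi$ (the module of polynomial vector fields on $V$ normal to generic $G$-orbits) to $q(V\md G)$. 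With this in place, hypothesis \eqref{eq:summa} reads $\sum \deg F_i = \deg \Omega$; since $\Phi := F_1 \wedge \cdots \wedge F_l \in \det\Ker(\phi)$ is nonzero (by linear independence) and of degree $\sum \deg F_i$, it must equal $c\Omega$ for some $c \in \bbk^\times$ and therefore \emph{generates} $\det\Ker(\phi)$. Restricted to $V_{\sf reg}$, $\Phi$ is a nowhere-vanishing trivialization of the line bundle $\det\eus E$, so $F_1(v),\dots,F_l(v)$ form a basis of $\g_v = \eus E_v$ for every $v \in V_{\sf reg}$. The $G$-equivariance of $\Phi$ then follows because $\eus E$, and hence $\det\eus E$, is a $G$-equivariant line bundle and the semisimple $G$ has no nontrivial characters, forcing $\Omega$ to be $G$-invariant.

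The remaining parts should follow formally. For (ii): the cokernel $T$ of $\bbk[V]\{F_1,\dots,F_l\} \hookrightarrow \Ker(\phi)$ is torsion and, by (i), supported on $V \setminus V_{\sf reg}$, hence of codimension $\ge 2$; reflexivity of $\Ker(\phi)$ combined with the vanishing $\mathrm{Ext}^i_{\bbk[V]}(T,\bbk[V])=0$ for $i\le 1$ (a standard depth argument on Cohen--Macaulay $\bbk[V]$) will force $T=0$. For (iii): the $F_i$'s trivialize $\eus E$ on $V_{\sf reg}$ and hence each $\mathrm{Sym}^d\eus E$, so $\Gamma(V_{\sf reg},\mathrm{Sym}^d\eus E) = \bbk[V] \cdot \mathrm{Sym}^d\{F_1,\dots,F_l\}$, which assembles to $\bbk[\q^*]^{R_u(Q)} = \bbk[V][\hat F_1,\dots,\hat F_l]$ as a polynomial ring. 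For (iv): by (ii), in each $V$-degree the ``leading'' piece of $\Ker(\phi)$ is $\bbk\{F_j : \deg F_j = d\}$ and is $G$-stable (since $G$ preserves $\Ker(\phi)$ and its $V$-grading), so the total $\bbk$-span of the $F_i$'s is $G$-stable; the analogous claim for the $\hat F_i$ follows from the $G$-equivariant bijection $\Ker(\phi) \leftrightarrow (\g \otimes \bbk[V])^{1\ltimes V^*}$ of Remark~\ref{rem:iz-kota}. The main obstacle will be the numerical identity $\deg\det\Ker(\phi) = \dim V - q(V\md G)$, which is precisely what makes \eqref{eq:summa} the borderline degree condition that forces everything into place.
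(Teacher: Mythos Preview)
Your approach is sound and in spirit very close to the paper's, but the packaging differs and there is one real gap you should be aware of.

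\textbf{On (i).} Both your argument and the paper's hinge on the same object: a distinguished morphism $V\to\wedge^l\g$ of degree $\dim V-q(V\md G)$, $G$-equivariant and nonvanishing on $V_{\sf reg}$. The paper imports this directly from Knop~\cite[Satz~1, Korollar~4]{kn86} as the covariant $c$, and then compares $c$ with $\tilde c:=\bigwedge F_i$ by an elementary coprime--polynomial argument (both land in the line $\wedge^l\g_v$ for generic $v$, same degree, so they differ by a unit). You instead phrase this as ``$\Phi=F_1\wedge\cdots\wedge F_l$ generates $\det\Ker(\phi)$,'' which is equivalent; but the ``key numerical input'' you flag, $\deg\Omega=\dim V-q(V\md G)$, \emph{is} Knop's theorem. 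Your proposed Hilbert-series route via $\coker\phi$ is exactly the kind of residue/Poincar\'e-series computation Knop carries out, and it is not a short formality: you would have to identify $\det\coker\phi$ with a twist governed by $q(V\md G)$, which requires the semisimplicity hypothesis in an essential way. So your plan is correct, but what you call the ``main obstacle'' is the entire content of the cited result, not a routine step. (Minor point: $\wedge^l$ of a reflexive module is not automatically reflexive; you want $(\wedge^l\Ker\phi)^{**}$, or equivalently the image in $\bbk[V]\otimes\wedge^l\g$, which is free over the UFD $\bbk[V]$.)

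\textbf{On (ii)--(iii).} Your reflexivity/Ext argument for (ii) and your bundle-plus-Hartogs argument for (iii) are correct and genuinely different from the paper's: the paper does (ii) by a two-line coprime argument and (iii) by the Igusa lemma (checking that $\pi:\q^*\to V\times\mathbb A^l$ is surjective over $V_{\sf reg}\times\mathbb A^l$ with irreducible fibers equal to single $R_u(Q)$-orbits). Your route is more conceptual; the paper's is more elementary and self-contained.

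\textbf{On (iv): a gap.} Your ``leading piece'' argument only shows that $\Ker(\phi)/\mathfrak m\Ker(\phi)\simeq\bigoplus_i\bbk\bar F_i$ is a $G$-module; it does \emph{not} show that the specific lift $\langle F_1,\dots,F_l\rangle\subset\Ker(\phi)$ is $G$-stable when the $\deg F_i$ are not all equal. Concretely, if $\deg F_1<\deg F_2$ then $g\ast F_2$ could a priori equal $F_2+p\,F_1$ with $p\in\bbk[V]_{\deg F_2-\deg F_1}$ nonconstant, and nothing in your argument rules this out. The paper argues (iv) differently: from (iii), $G=Q/R_u(Q)$ acts on $\q^*\md R_u(Q)\simeq V\times\mathbb A^l$ compatibly with the $G$-action on the factor $V$, and then reads off that $\langle\hat F_1,\dots,\hat F_l\rangle$ is $G$-stable from the structure of this action (and transfers back to the $F_i$ via $g\cdot\hat F_i=\widehat{g\ast F_i}$). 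If you want to repair your approach, use reductivity of $G$ to choose a $G$-stable graded complement $W$ to $\mathfrak m\Ker(\phi)$; then $W$ is a minimal generating space with the same degree multiset as $\{F_i\}$, so after replacing the $F_i$ by a homogeneous basis of $W$ (which preserves \eqref{eq:summa}) the span is $G$-stable. That is what is actually needed downstream in Theorem~\ref{thm:main2}.
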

\begin{proof}
(i) \ Since a generic isotropy group is $l$-dimensional,
$\max_{v\in V}\dim G{\cdot}v =\dim\g-l=:m$. By~\cite[Satz~1 \& Korollar~4]{kn86}, there is a 
$G$-equivariant map $c:V\to \wedge^m\g^*\simeq \wedge^l\g$ such that $\deg c=\dim V- q(V\md G)$ 
and if $v\in V_{\sf reg}$, then $0\ne c(v) \in \wedge^l(\g_v)\subset \wedge^l\g$. On the other hand,
the map $\tilde c=\bigwedge_{i=1}^l F_i: V\to \wedge^l\g$ has the same degree and also
$\tilde c(v) \in \wedge^l(\g_v)\subset \wedge^l\g$ for almost all $v\in V_{\sf reg}$. In other words,
$c(v)$ and $\tilde c(v)$ are proportional for almost all $v\in V$. Consequently, there are coprime 
homogeneous $f,\tilde f\in \bbk[V]$ such that  $fc=\tilde f\tilde c$. Since $\deg c=\deg\tilde c$, 
we have
$\deg f=\deg\tilde f$ as well. If $\deg \tilde f>0$, then there is $v\in V_{\sf reg}$ such that $\tilde f(v)=0$ and $f(v)\ne 0$. Then $c(v)=0$, a contradiction! Hence $f,\tilde f\in \bbk^\times$,
$\bigwedge_{i=1}^l F_i: V\to \wedge^m\g$ is $G$-equivariant,  and
$F_1(v),\dots, F_l(v)\in \g$ are linearly independent for all $v\in V_{\sf reg}$.
\\ \indent
(ii) \ As $\codim(V\setminus V_{\sf reg})\ge 2$, the last property also implies that $(F_1,\dots, F_l)$ is 
a basis for the $\bbk[V]$-module $\Ker(\phi)$.
Indeed, recall that $\rk \Ker(\phi)=\min_{v\in V}\dim\g_v=l$.
If $F\in \Ker(\phi)$, then there are $f,f_i\in \bbk[V]$ such that
$fF=\sum_{i=1}^l f_iF_i$. Again, if $\deg f>0$, then there is $v\in V_{\sf reg}$ such that $f(v)=0$ and
$f_i(v)\ne0$ for all (some) $i$. This contradicts the linear independence of  $\{F_i(v)\}$ for all 
$v\in V_{\sf reg}$. Hence $f\in \bbk^\times$, and we are done.

(iii) \ Recall that now $R_u(Q)=1\ltimes V^*$, $\mu: V\times V^*\to \g^*$ is the moment mapping, and
the $R_u(Q)$-orbits in $\q^*$ are 
\[
    R_u(Q){\cdot}(\xi,v)=(\xi+\mu(v, V^*), v)=(\xi+(\g_v)^\perp, v) .
\]
Hence $\dim R_u(Q){\cdot}(\xi,v)=\dim (\g_v)^\perp$ and 
$\max_{\eta\in\q^*} \dim R_u(Q){\cdot}\eta= \dim\g-l$.  Therefore 
$\trdeg \bbk[\q^*]^{R_u(Q)}=\dim V+l$. Let  $(\zeta_1,\dots,\zeta_n)$, $n=\dim V$, be a basis of 
$V^*$ (We regard the $\zeta_i$'s as linear functions on $\q^*$.) 
Then $\zeta_1,\dots,\zeta_n, \hat F_1,\dots, \hat F_l$ are algebraically independent and belong to 
$\bbk[\q^*]^{R_u(Q)}$. Consider the map
$\pi:\q^*=\g^*\oplus V \to V\times \mathbb A^l$ given by
\beq   \label{eq:pi}
   \bigl(\eta=(\xi, v) \in \q^* \bigr) \mapsto   \bigl((v, \hat F_1(\eta),\dots,\hat F_l(\eta)\in
   V\times \mathbb A^l\bigr). 
\eeq
By the Igusa Lemma~\cite[Theorem\,4.12]{VP}, in order to prove that $\pi$ is the quotient morphism by
$R_u(Q)$,
it suffices to verify the following two conditions:

$(\lozenge_1)$ \ The closure of $(V\times\mathbb A^l)\setminus \Ima(\pi)$ does not contain divisors;

$(\lozenge_2)$ \ There is a dense open subset $\Psi\subset V\times\mathbb A^l$ such that $\pi^{-1}(b)$ contains a dense $R_u(Q)$-orbit for all $b\in\Psi$.

\noindent
{\it\bfseries For} $(\lozenge_1)$:  If $v\in V_{\sf reg}$, then $\{F_i(v)\}$ are linearly independent in view of (i). Therefore, the system of linear equations $\langle F_i(v),\xi\rangle=a_i$, $1\le i\le l$,  has a solution $\xi$ for any 
$(a_1,\dots,a_l)\in\mathbb A^l$. Therefore, $\Ima(\pi)\supset V_{\sf reg}\times \mathbb A^l$.

\noindent
{\it\bfseries For} $(\lozenge_2)$: Suppose that $v\in V_{\sf reg}$, $\bar a=(a_1,\dots,a_l)\in \mathbb A^l$, and 
$\xi_0$ is a solution to the system $\langle F_i(v),\xi\rangle=a_i$. Then $\pi^{-1}(v,\bar a)=
(\xi_0+ (\g_v)^\perp, v)$, which is a sole $R_u(Q)$-orbit.

Thus,  $\bbk[\q^*]^{R_u(Q)}=\bbk[\zeta_1,\dots,\zeta_n, \hat F_1,\dots, \hat F_l]$ 
and the morphism  $\pi_{\q^*,R_u(Q)}$ is given by \eqref{eq:pi}.

(iv) \ Since $\q^*\md {R_u(Q)}\simeq V\times \mathbb A^l$, $G=Q/R_u(Q)$ acts on $\q^*\md {R_u(Q)}$, and $V$ is a $G$-module, the explicit form of the free generators of 
$\bbk[\q^*]^{R_u(Q)}$ shows that the $\bbk$-linear span $\langle \hat F_1,\dots, \hat F_l\rangle$ is a $G$-stable subspace of $\bbk[\q^*]$. Using the definition of $\hat F_i$, one readily verifies that 
$g{\cdot}\hat F_i=\widehat{g{\ast} F_i}$. This means that 
\[
  \text{$\langle \hat F_1,\dots, \hat F_l\rangle$ is a $G$-stable subspace}  \Leftrightarrow
  \text{$\langle F_1,\dots, F_l\rangle$ is a $G$-stable subspace} .  \qedhere
\]
\end{proof}

Note that part (ii) of this theorem is a direct consequence of (i),  and our proof of (ii), i.e., essentially the proof of the implication (i)$\Rightarrow$(ii), appears already in the proof of Theorem~1.9 in~\cite{jac}.

The condition \eqref{eq:summa} is rather strong, and all known to us instances of such a 
phenomenon occur only if $\mathsf{g.s.}(\g:V)$ is abelian, see examples in Sections~\ref{sect:primery} and
\ref{sect:primery2}. As a by-product of our proof of part (i) in Theorem~\ref{thm:main1}, we also obtain the
following assertion:

\begin{lm}   \label{lm:neravenstvo-for-summa} 
Suppose that $G\subset GL(V)$ is semisimple, $\codim(V\setminus V_{\sf reg})\ge 2$, $\dim \gig(G:V)=l$, and 
$F_1,\dots,F_l\in \Ker(\phi)$ are homogeneous and  linearly independent. Then
$\sum_{i=1}^l \deg F_i\ge \deg c= \dim V-q(V\md G)$.
\end{lm}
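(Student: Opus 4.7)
The plan is to mimic the argument of Theorem~\ref{thm:main1}(i), but in the inequality form: we compare the top-wedge $\tilde c:=\bigwedge_{i=1}^l F_i$ to Knop's canonical equivariant map $c$ via a coprime ratio, and then use \ctc on the zero divisor of the denominator to rule out $\deg f>0$.

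First, I would recall the Knop map. Since $G$ is semisimple and $\dim \gig(G:V)=l$, the maximal orbit dimension is $m=\dim\g-l$, and by \cite[Satz~1 \& Korollar~4]{kn86} there exists a nonzero $G$-equivariant morphism $c:V\to \wedge^l\g$ with $\deg c=\dim V-q(V\md G)$ such that $c(v)\in \wedge^l(\g_v)\setminus\{0\}$ for every $v\in V_{\sf reg}$. Next, form $\tilde c=\bigwedge_{i=1}^l F_i \in \Mor(V,\wedge^l\g)$, which has degree $\sum_{i=1}^l\deg F_i$. Since $F_i(v)\in \g_v$ for all $v$ and all $i$, we have $\tilde c(v)\in \wedge^l(\g_v)$ for every $v\in V$. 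For generic $v\in V_{\sf reg}$, where $\dim\g_v=l$, the space $\wedge^l(\g_v)$ is one-dimensional, and the linear independence of the $F_i$ over $\bbk[V]$ guarantees that the $F_i(v)$ are linearly independent in $\g$ at a generic point, hence span $\g_v$. Therefore $\tilde c(v)\ne 0$ for generic $v\in V_{\sf reg}$, and both $c(v)$ and $\tilde c(v)$ lie in the same line $\wedge^l(\g_v)$.

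Since $c$ and $\tilde c$ take values in a common line of $\wedge^l\g$ over a dense open subset of $V$, there exist coprime homogeneous $f,\tilde f\in\bbk[V]$ such that
\begin{equation*}
   f\,\tilde c \;=\; \tilde f\,c \qquad \text{in } \Mor(V,\wedge^l\g) .
\end{equation*}
Comparing degrees gives $\deg\tilde c-\deg c=\deg\tilde f-\deg f$, so it remains to show $\deg f=0$. Suppose, for contradiction, that $\deg f>0$. Then $\{f=0\}$ is a divisor in $V$; by hypothesis $\codim_V(V\setminus V_{\sf reg})\ge 2$, so this divisor meets $V_{\sf reg}$ in a dense open subset of itself. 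On the other hand, coprimality of $f$ and $\tilde f$ means that $\{\tilde f=0\}$ shares no component with $\{f=0\}$, so we can pick $v\in V_{\sf reg}$ with $f(v)=0$ and $\tilde f(v)\ne 0$. Evaluating the relation $f\tilde c=\tilde f c$ at $v$ gives $\tilde f(v)\,c(v)=0$, whence $c(v)=0$, contradicting $c(v)\ne 0$ for $v\in V_{\sf reg}$.

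Hence $f\in \bbk^\times$ and $\deg \tilde c=\deg\tilde f+\deg c\ge \deg c$, i.e.\ $\sum_{i=1}^l\deg F_i\ge \dim V-q(V\md G)$, as required. The main subtlety — and the only place the hypotheses are genuinely used — is the divisor step in the last paragraph: \ctc is exactly what is needed to force the zero locus of $f$ to meet $V_{\sf reg}$, and coprimality is what keeps $\tilde f$ nonzero there. Everything else is a direct comparison of the generic rank-one values of $c$ and $\tilde c$ inside $\wedge^l\g_v$.
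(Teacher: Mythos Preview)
Your proof is correct and follows essentially the same approach as the paper: the lemma is stated there as a direct by-product of the proof of Theorem~\ref{thm:main1}(i), and your argument reproduces that proof in inequality form, comparing $\tilde c=\bigwedge_i F_i$ to Knop's map $c$ via a coprime relation and using the \ctc to force the denominator to be constant. The only cosmetic difference is the labeling of $f$ and $\tilde f$ (the paper writes $fc=\tilde f\tilde c$ and rules out $\deg\tilde f>0$), but the logic is identical.
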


\begin{rmk}
The interest of Theorem~\ref{thm:main1} is in the case, where $l=\dim \mathsf{g.s.}(\g:V)> 0$, i.e., there {\bf are} certain morphisms $\{F_i\}$. If $l=0$, then the {\sl codimension-$2$ condition} for $(G:V)$ implies that $q(V\md G)=q(V)$ \cite[Korollar\,4]{kn86}. i.e., formally, Eq.~\eqref{eq:summa} holds. Then parts (i), (ii), (iv) become vacuous, but part (iii) still makes sense and remains true. 
For, in this case $\bbk[\q^*]^{R_u(Q)}\simeq \bbk[V]$, see~\cite[Theorem\,6.4]{p05}.
\end{rmk}

\begin{thm}   \label{thm:main2}
Let $G, V, F_1,\dots, F_l$ be as in Theorem~\ref{thm:main1}. Suppose also that the identity
component of $H=\gig(G:V)$ is a torus,  $H$ is not contained in a proper normal 
subgroup of $G$, and $\h^H=\h$. Then 
\begin{itemize}
\item[\sf (i)] \ If the \cnc holds for $(G:V)$ with $n\ge 2$, then it also holds for $(Q:\q^*)$;
\item[\sf (ii)] \  The morphisms $F_1,\dots, F_l$ are $G$-equivariant, 
the corresponding polynomials $\hat F_1,\dots, \hat F_l$ are $G$-invariant,
and hence $\bbk[\q^*]^Q=\bbk[V]^G[\hat F_1,\dots,\hat F_l]$, i.e.,
$\q^*\md Q \simeq V\md G \times \mathbb A^l$;
\item[\sf (iii)] \  $\bbk[\q^*]^{R_u(Q)}$ is a maximal Poisson-commutative subalgebra of $\bbk[\q^*]$;
\item[\sf (iv)] \  If \ $\bbk[V]^G$ is a polynomial algebra, then the Kostant criterion holds for $\q$.
\end{itemize}
\end{thm}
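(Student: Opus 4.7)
The plan is to handle the four assertions in the order (ii), (i), (iii), (iv), with (ii) being the crucial input from which the remaining three parts follow by bookkeeping via Theorem~\ref{thm:main1} and Lemma~\ref{lm:vspomogat}. For (ii), I would exploit Theorem~\ref{thm:main1}(iv): the $\bbk$-span $U:=\langle F_1,\dots,F_l\rangle$ is a $G$-stable subspace of $\Mor(V,\g)$, yielding a representation $\rho\colon G\to GL(U)$. Pick $v\in V_{\sf reg}$ whose isotropy subgroup is exactly $H$; by Theorem~\ref{thm:main1}(i), the values $F_1(v),\dots,F_l(v)$ form a basis of $\g_v=\h$. For $h\in H$ one computes
\[
  (h\ast F_i)(v)=h\bigl(F_i(h^{-1}v)\bigr)=\Ad(h)F_i(v)=F_i(v),
\]
using $h^{-1}v=v$ together with $F_i(v)\in\h=\h^H$. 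Writing $h\ast F_i=\sum_j a_{ij}(h)F_j$ and evaluating at $v$ forces $a_{ij}(h)=\delta_{ij}$, so $H\subseteq\Ker\rho$; since $\Ker\rho$ is a closed normal subgroup of $G$ and $H$ lies in no proper normal subgroup, $\Ker\rho=G$. Hence every $F_i$ is $G$-equivariant, every $\hat F_i$ is $Q$-invariant, and taking $G$-invariants in Theorem~\ref{thm:main1}(iii) (using $G=Q/R_u(Q)$) gives $\bbk[\q^*]^Q=\bbk[V]^G[\hat F_1,\dots,\hat F_l]$.

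For (i), a direct stabilizer computation in $\q=\g\ltimes V^*$ shows that for $\eta=(\xi,v)$ with $v\in V_{\sf reg}$ one has $\dim\q_\eta=\dim V-\dim\g+2l=\ind\q$; the crucial point is that $\g_v=\h$ is abelian, so the condition $\ads(x)\xi\in(\g_v)^\perp$ imposed on $x\in\g_v$ holds automatically. Consequently $\g^*\times V_{\sf reg}\subseteq\q^*_{\sf reg}$, and therefore
\[
  \codim_{\q^*}(\q^*\setminus\q^*_{\sf reg})\ge\codim_V(V\setminus V_{\sf reg})\ge n.
\]
For (iii), the abelianness of the generic stabiliser $\h$ makes the main result of~\cite{Y16} applicable, yielding that $\bbk[\q^*]^{R_u(Q)}$ is Poisson-commutative; as discussed after Lemma~\ref{lm:vspomogat}, this algebra is then the Poisson centraliser of $V^*$ in $\gS(\q)$ and hence a maximal Poisson-commutative subalgebra.

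For (iv), assume $\bbk[V]^G=\bbk[g_1,\dots,g_r]$; by (ii) the algebra $\bbk[\q^*]^Q$ is generated by $g_1,\dots,g_r,\hat F_1,\dots,\hat F_l$. Since $r=\dim V\md G=\dim V-(\dim\g-l)$ and Lemma~\ref{lm:vspomogat} gives $\ind\q=\dim V-\dim\g+2l$, the transcendence degree $\ind\q=r+l$ of $\bbk[\q^*]^Q$ equals the number of generators, so they are algebraically independent and $\bbk[\q^*]^Q$ is a polynomial ring. The toral hypothesis makes $b(\g_x)=l$, so Lemma~\ref{lm:vspomogat}(i) yields $b(\q)=\dim V+l$, and by~\eqref{eq:summa},
\[
  \sum_{i=1}^r\deg g_i+\sum_{j=1}^l\deg\hat F_j=q(V\md G)+\sum_{j=1}^l(\deg F_j+1)=\dim V+l.
\]
Together with \ctc for $(Q{:}\q^*)$ from (i), the criterion recalled after Definition~\ref{def:codim2} (Theorem~1.2 of~\cite{coadj}) then delivers the Kostant property for $\q$. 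The main obstacle is (ii): it is the sole step that genuinely invokes all three extra hypotheses, and the promotion from $H$-triviality on $U$ to full $G$-triviality relies essentially on the normality assumption on $H$.
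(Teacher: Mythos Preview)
Your argument is essentially the paper's own proof, with the same key mechanism in (ii): use Theorem~\ref{thm:main1}(iv) to get the representation $\rho$ on $\langle F_1,\dots,F_l\rangle$, show $H\subset\Ker\rho$ via $\h^H=\h$ and the linear independence from Theorem~\ref{thm:main1}(i), then kill $\rho$ by the normality hypothesis. Parts (iii) and (iv) likewise match the paper.

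There is one genuine slip in your treatment of (i). You assert ``$\g_v=\h$'' for $v\in V_{\sf reg}$, but this is only true on the dense open locus where $G_v$ is conjugate to $H$; for an arbitrary $v\in V_{\sf reg}$ you only know $\dim\g_v=l$, and $\g_v$ need not be conjugate to $\h$. Your stabiliser computation for $\q_\eta$ needs $\g_v$ abelian for \emph{every} $v\in V_{\sf reg}$, not just generic ones. The paper closes this by invoking the standard deformation argument: since the generic stabiliser is abelian and abelianness is a closed condition on $l$-dimensional subalgebras, every $\g_v$ with $v\in V_{\sf reg}$ is abelian. With that in hand (or equivalently by citing \cite[Prop.\,5.5]{p05}), your conclusion $\g^*\times V_{\sf reg}\subset\q^*_{\sf reg}$ goes through.
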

\begin{proof}
(i) \ Since a generic stabiliser is abelian, the standard deformation argument shows that $\g_v$ is 
abelian for any $v\in V_{\sf reg}$. It then follows from \cite[Prop.\,5.5]{p05} that $(\xi,v)$
is $Q$-regular for any $\xi\in\g^*$. Hence  $\q^*_{\sf reg}\supset \g^*\times V_{\sf reg}$.

(ii) \ By Theorem~\ref{thm:main1}(iv), the space $\langle F_1,\dots, F_l\rangle$ is $G$-stable 
and therefore
\[
    g\ast F_i =\sum_{j=1}^l a_{ij}(g) F_j   \qquad \forall g\in G .
\]
Recall that $g(F_i(g^{-1}v))=(g\ast F_i)(v)$. 
If  $g\in G_v$, then $F_i(g^{-1}v)=F_i(v)\in \g_v$. Moreover, if $G_v\sim H$, then
$g(F_i(g^{-1}v))=F_i(v)$ in view of the assumption $\h^H=\h$. Therefore
$F_i(v)=\sum_{j=1}^l a_{ij}(g) F_j$ for all $v$ such that $G_v\sim H$ and $g\in G_v$.
By Theorem~\ref{thm:main1}(i),  $\{F_i(v)\}$ are linearly independent. Hence
$a_{ij}(g)=\delta_{ij}$ for any $g\in G_v$ and $G_v\sim H$. Hence the kernel of the representation 
$\rho: G\to GL(\langle F_1,\dots, F_l\rangle)$ contains the normal subgroup generated by all
generic isotropy subgroups. Under our assumption, this implies $\Ker(\rho)=G$. Therefore, each
$F_i$ is $G$-equivariant and thereby each $\hat F_i$ is $G$-invariant and also $Q$-invariant.
Hence $G$ acts trivially on $\mathbb A^l$ and 
\[
   \q^*\md Q =(\q^*\md {R_u(Q)})\md G \simeq (V\times \mathbb A^l)\md G\simeq
   V\md G \times \mathbb A^l .
\]
\indent (iii) \ This is a particular case of more general results of \cite{Y16}. However, using the $G$-equivariance of  $\{F_i\}$ one can verify directly that the basic invariants in $\bbk[\q^*]^{R_u(Q)}$ pairwise commute w.r.t. the Poisson bracket $\{\ ,\ \}$
(cf. the proof of Theorem~3.3 in \cite{BSM14}).

(iv) \ If  $\bbk[V]^G$ is a polynomial algebra, then so is $\bbk[\q^*]^Q$ (in view of (ii)), and the sum of degrees of 
the basic invariants in $\bbk[\q^*]^Q$ equals 
$q(V\md G)+\sum_{i=1}^l \deg\hat F_i=q(V\md G)+(\sum_{i=1}^l \deg F_i)+l=\dim V+l=b(\q)$. 
Together with the \ctc for $(Q:\q^*)$, this implies that the Kostant criterion holds for $\q$,
see~\cite[Theorem\,1.2]{coadj}. 
\end{proof}

\begin{cl}  Under the assumptions of Theorems~\ref{thm:main1} and \ref{thm:main2},
the $\bbk[V]$-module $\Ker(\phi)$ is free and is generated by $G$-equivariant morphisms.
Therefore, the $\bbk[V]^G$-module $\Ker(\phi_G)$ is also free, with the "same basis" $F_1,\dots,F_l$.
That is,  $\Ker(\phi)\simeq \Ker(\phi_G)\otimes_{\bbk[V]^G}\bbk[V]$.
\end{cl}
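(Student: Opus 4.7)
The first assertion is nearly immediate from the preceding theorems. By Theorem~\ref{thm:main1}(ii), $\Ker(\phi)$ is a free $\bbk[V]$-module with basis $F_1,\dots,F_l$, and by Theorem~\ref{thm:main2}(ii), each $F_i$ is $G$-equivariant. So the plan for the first half is simply to record this and pass to the $G$-equivariant setting.

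For the second half, I would first observe that each $F_i$, being both $G$-equivariant and in $\Ker(\phi)$, lies in $\Ker(\phi_G)$. So it suffices to verify that $F_1,\dots,F_l$ form a free $\bbk[V]^G$-basis of $\Ker(\phi_G)$. The rank is right: by Theorem~\ref{thm:rangi} together with the hypothesis $\h^H=\h$ from Theorem~\ref{thm:main2}, we have
\[
\rk_{\bbk[V]^G}\Ker(\phi_G)=\dim\h^H=\dim\h=l.
\]
Free-ness over $\bbk[V]^G$ is immediate from free-ness over the larger ring $\bbk[V]$.

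The only real step is to check generation. Let $F\in\Ker(\phi_G)$. Since $F\in\Ker(\phi)$, Theorem~\ref{thm:main1}(ii) gives a unique expression $F=\sum_{i=1}^{l}f_iF_i$ with $f_i\in\bbk[V]$. For any $g\in G$, applying $g\ast$ and using that $F$ and each $F_i$ are $G$-equivariant yields
\[
F=g\ast F=\sum_{i=1}^{l}(g{\cdot}f_i)(g\ast F_i)=\sum_{i=1}^{l}(g{\cdot}f_i)F_i.
\]
Uniqueness of the expression forces $g{\cdot}f_i=f_i$ for all $g$ and all $i$, so $f_i\in\bbk[V]^G$. Hence $F_1,\dots,F_l$ generate $\Ker(\phi_G)$ over $\bbk[V]^G$, establishing a free basis.

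The tensor-product statement is then formal: both $\Ker(\phi)\simeq\bbk[V]^{\oplus l}$ and $\Ker(\phi_G)\simeq(\bbk[V]^G)^{\oplus l}$ via the same basis $F_1,\dots,F_l$, giving $\Ker(\phi)\simeq\bbk[V]\otimes_{\bbk[V]^G}\Ker(\phi_G)$. I do not anticipate any real obstacle here; the crux is the uniqueness-of-coefficients argument above, which works precisely because $F_1,\dots,F_l$ is already a $\bbk[V]$-basis of $\Ker(\phi)$ and is pointwise a $G$-stable collection.
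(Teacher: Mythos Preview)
Your argument is correct. The paper itself states this corollary without proof, treating it as immediate from Theorems~\ref{thm:main1} and~\ref{thm:main2}; your uniqueness-of-coefficients argument is precisely the natural way to supply the missing detail, and nothing more is needed.
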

\begin{ex}   \label{ex:adjoint}
Let $\g$ be a semisimple Lie algebra of rank $l$. Then $\g\simeq \g^*$ as $G$-module,
$\ind\g=l$, and $\bbk[\g]^G=\bbk[f_1,\dots,f_l]$ is a graded polynomial algebra. Set $d_i=\deg f_i$.
Then $q(\g\md G)=\sum_{i=1}^l d_i=b(\g)$ is the dimension of a Borel subalgebra. Here
$\gig(G:\g)=\BT_l$ is a maximal torus. It is known that $\Mor_G(\g,\g)$ is a free 
$\bbk[\g]^G$-module generated by the differentials $\textsl{d}f_i=:F_i$, $i=1,\dots,l$. (This is a special case of a general result of Vust~\cite[Ch.\,III, \S\,2]{vust}, see also \cite[Theorem\,4.5]{p05}.) Here
$\deg F_i=d_i-1$ and hence Eq.~\eqref{eq:summa} holds. Thus,  
Theorems~\ref{thm:main1} and \ref{thm:main2} apply to $\g$ and $\q=\g\ltimes\g$. A specific feature of 
this case is that here $\phi_G\equiv 0$ and $\Ker(\phi_G)=\Mor_G(\g,\g)$.
\end{ex}

\begin{rmk}       \label{rmk:about-ss}
The semisimplicity of $G$ is assumed in Theorems~\ref{thm:main1} and \ref{thm:main2}, because Knop's 
results in \cite{kn86} heavily rely on this assumption. Using those results and Eq.~\eqref{eq:summa}, we
then prove that $\bigwedge_{i=1}^l F_i(v)\ne 0$ for all $v\in V_{\sf reg}$ and so on\dots 
\ But, if one can directly verify that
$Z=\{v\in V\mid \bigwedge_{i=1}^l F_i(v)=0\}$ does not contain divisors, 
then the proof of Theorem~\ref{thm:main1}(iii),(iv) goes through with $V\setminus Z$ in place of 
$V_{\sf reg}$ and without the semisimplicity condition.
(See Example~\ref{gl-nk} below.)
\\ \indent
Furthermore, if we know somehow that $\{F_i\}$ are $G$-equivariant (i.e., $F_i\in\Ker(\phi_G)$), then
$F_i(v)\in (\g_v)^{G_v}$ for all $v\in V$. For $v\in (V\setminus Z)\cap V_{\sf reg}$, this implies that $\dim \g_v=
\dim (\g_v)^{G_v}$.  Hence a generic stabiliser is abelian and the \ctc for $(G:V)$ implies that for $(Q:\q^*)$, cf. Theorem~\ref{thm:main2}(i). In this situation, we also have 
$\q^*\md Q\simeq V\md G\times \mathbb A^l$, and $\{F_1,\dots, F_l\}$ is a basis for both $\Ker(\phi)$ 
and $\Ker(\phi_G)$.
\end{rmk}
\begin{rmk}     \label{rmk:essential-h}
The assumptions of Theorem~\ref{thm:main2} that the adjoint representation of $H=\gig(G:V)$ is 
trivial and that $H$ is not contained in a proper normal subgroup of $G$ are essential. We will see in Example~\ref{ex:tri-SL} that  if this is not the case, then the 
morphisms $F_1,\dots,F_l$ satisfying \eqref{eq:summa} can be not $G$-equivariant and 
$\langle F_1,\dots,F_l\rangle$ affords a nontrivial representation of (a simple factor of) $G$.
\end{rmk}
\noindent
On the other hand, if $l=\dim\gig(G:V)=1$, then the assumptions of both theorems can be simplified,
and one also obtains stronger results.

\begin{thm}   \label{thm:l=1}
Suppose that $G\subset GL(V)$ is semisimple, $\codim (V\setminus V_{\sf reg})\ge 2$, and
$\mathsf{g.s.}(\g:V)=\te_1$. As usual, $\q=\g\ltimes V^*$. Then
\begin{itemize}
\item[\sf (i)] \ The \ctc holds for $(Q:\q^*)$;
\item[\sf (ii)] \ $\bbk[\q^*]^{R_u(Q)}$ is freely generated by a basis of $V^*$ and one more polynomial
$\hat F$ such that $\deg\hat F=\dim V-q(V\md G)+1$. In particular, $\q^*\md R_u(Q)\simeq V\times\mathbb A^1$;
\item[\sf (iii)] \ $\q^*\md Q\simeq V\md G\times\mathbb A^1$;
\item[\sf (iv)] \ If\/ $\bbk[V]^G$ is a polynomial ring, then $\q$ satisfies the Kostant criterion;
\item[\sf (v)] \  Furthermore, if  $\pi_{V,G}: V\to V\md G$ is equidimensional and 
{\color{MIXT} $(\ast)$} each irreducible
component of $\N_G(V):=\pi_{V,G}^{-1}(\pi_{V,G}(0))$ contains a $G$-regular point, 
then $\pi_{\q^*,Q}:\q^*\to \q^*\md Q$ is also equidimensional and the enveloping algebra $\eus U(\q)$ is 
a free module over its centre $\eus Z(\q)$.
\end{itemize}
\end{thm}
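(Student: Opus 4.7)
The plan is to deduce all five parts from the machinery already at our disposal, specialised to $l=1$. For (ii), we first invoke Proposition~\ref{dim=1}, which furnishes a primitive homogeneous $G$-equivariant morphism $F\in\Ker(\phi)$ generating $\Ker(\phi)$ as a free $\bbk[V]$-module of rank one. To verify the degree equality~\eqref{eq:summa} needed to apply Theorem~\ref{thm:main1}, we compare $F$ with Knop's covariant $c:V\to\wedge^1\g\simeq\g$ from \cite[Satz~1]{kn86}: this $c$ lies in $\Ker(\phi)$ and has degree $\dim V-q(V\md G)$. Writing $c=hF$ with $h\in\bbk[V]$ gives $\deg F\le\deg c$, while Lemma~\ref{lm:neravenstvo-for-summa} gives the reverse inequality. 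Hence $\deg F=\dim V-q(V\md G)$ and $h\in\bbk^\times$. Theorem~\ref{thm:main1}(iii) now yields $\bbk[\q^*]^{R_u(Q)}=\bbk[V][\hat F]$, and by construction of $\hat F$ we get $\deg\hat F=\deg F+1=\dim V-q(V\md G)+1$, proving (ii).

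Parts (i) and (iii) are then essentially free. Since $F$ is already $G$-equivariant, $\hat F$ is $Q$-invariant; taking $G$-invariants in the description from (ii) gives $\bbk[\q^*]^Q=\bbk[V]^G[\hat F]$ and the factorisation $\q^*\md Q\simeq V\md G\times\mathbb A^1$ of (iii). For (i), since $\mathsf{g.s.}(\g{:}V)=\te_1$ is abelian, the standard deformation argument shows that $\g_v$ is abelian for all $v\in V_{\sf reg}$; then, exactly as in the proof of Theorem~\ref{thm:main2}(i), \cite[Prop.\,5.5]{p05} implies $\q^*_{\sf reg}\supset\g^*\times V_{\sf reg}$, which promotes the \ctc from $(G{:}V)$ to $(Q{:}\q^*)$.

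For (iv), assume $\bbk[V]^G$ is polynomial; then by (iii) so is $\bbk[\q^*]^Q$, and the sum of degrees of its basic invariants equals
\[
q(V\md G)+\deg\hat F=q(V\md G)+\bigl(\dim V-q(V\md G)+1\bigr)=\dim V+1.
\]
On the other hand, Lemma~\ref{lm:vspomogat}(i) gives $b(\q)=\dim V+b(\g_x)$, and here $\g_x=\te_1$ is a one-dimensional torus, so $b(\g_x)=1$ and $b(\q)=\dim V+1$. The sum of degrees therefore matches $b(\q)$; combined with the \ctc from (i), the criterion \cite[Theorem\,1.2]{coadj} yields the Kostant regularity criterion for $\q$.

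The main obstacle is part (v). The strategy is first to establish equidimensionality of $\pi_{\q^*,Q}:\q^*\to V\md G\times\mathbb A^1$ and then to invoke a Kostant-type freeness theorem for semi-direct products. The fibre over $(\bar v,a)$ is $\{(\xi,v)\mid \pi_{V,G}(v)=\bar v,\ \langle F(v),\xi\rangle=a\}$. Equidimensionality of $\pi_{V,G}$ fixes $\dim\pi_{V,G}^{-1}(\bar v)=\dim V-\dim V\md G$ at every point, while Theorem~\ref{thm:main1}(i) gives $F(v)\ne 0$ for $v\in V_{\sf reg}$. The delicate case is $\bar v=\pi_{V,G}(0)$, where assumption $(\ast)$ is exactly what is needed: it guarantees that each irreducible component of $\N_G(V)$ contains a $G$-regular point, so $F$ is not identically zero on any such component. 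Hence on a dense open subset of $\pi_{V,G}^{-1}(\bar v)$ the linear form $\xi\mapsto\langle F(v),\xi\rangle$ is non-trivial, cutting out a hyperplane in $\g^*$, so the fibre has the expected dimension $\dim\q-\dim\q^*\md Q=\dim\g+(\dim V-\dim V\md G)-1$. Once equidimensionality of $\pi_{\q^*,Q}$ is secured, the freeness of $\eus U(\q)$ over $\eus Z(\q)$ follows from the combination of polynomiality of $\bbk[\q^*]^Q$ and the Kostant criterion from (iv) with the equidimensional quotient map, via the semi-direct-product version of Kostant's theorem in~\cite{Y16}. The main technical hurdle will be ruling out collapse of the fibre dimension on the null-cone, which is precisely what condition $(\ast)$ is designed to prevent.
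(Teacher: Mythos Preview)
Your argument follows the paper's approach closely and is essentially correct; parts (i), (iii), (iv) match the paper's reasoning almost verbatim. Two points deserve comment.

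For (ii), your detour through Proposition~\ref{dim=1} is unnecessary and, strictly speaking, its hypothesis (that $\gig(G{:}V)$ is a one-dimensional \emph{torus}, hence connected) is formally stronger than the theorem's assumption $\mathsf{g.s.}(\g{:}V)=\te_1$. The paper simply takes $F$ to be Knop's covariant $c$ from the outset: it is $G$-equivariant, lies in $\Ker(\phi)$, has $\deg c=\dim V-q(V\md G)$, and satisfies $c(v)\ne 0$ for $v\in V_{\sf reg}$, so~\eqref{eq:summa} holds automatically and Theorem~\ref{thm:main1} applies. Your comparison $c=hF$ combined with Lemma~\ref{lm:neravenstvo-for-summa} recovers the same $F$ up to a scalar, so nothing is lost, but the direct route is cleaner.

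For (v), your fibre-by-fibre analysis leaves unexplained why the case $\bar v\ne\pi_{V,G}(0)$ is not delicate; hypothesis~{\color{MIXT}$(\ast)$} speaks only of the null-cone. The paper closes this gap by invoking the standard graded-situation fact (\cite[Eq.\,(8.1)]{VP}) that equidimensionality of $\pi_{\q^*,Q}$ is equivalent to $\dim\N_Q(\q^*)=\dim\q-\dim\q^*\md Q$, so only the zero fibre needs checking. Once equidimensionality is established, the paper does not appeal to~\cite{Y16}: since $\bbk[\q^*]^Q$ is polynomial (by (iii) and the hypothesis of (iv)), equidimensionality gives that $\gS(\q)=\bbk[\q^*]$ is free over $\gS(\q)^Q$, and the passage to $\eus U(\q)$ free over $\eus Z(\q)$ is the usual PBW-filtration (``standard deformation'') argument.
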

\begin{proof}
Since $l=1$, we have $b(\q)=\dim V+1$. Here we need only one morphism
$F: V\to \wedge^{\dim\g-1}\g^*\simeq \g$ of degree $\dim V-q(V\md G)$ such that 
$0\ne F(v)\in \g_v$ for all $v\in V_{\sf reg}$. The existence of such a $G$-equivariant morphism follows 
from Knop's theory \cite{kn86}. As the morphism $F$ is $G$-equivariant and  $F\in\Ker(\phi)$, 
the corresponding polynomial $\hat F$ lies in $\bbk[\q^*]^Q$. Then the proofs of 
Theorems~\ref{thm:main1} and \ref{thm:main2} apply and yield parts {\sf (i)-(iv)}.

{\sf (v)} \ The equidimensionality of $\pi_{V,G}$ is equivalent to that $\dim\N_G(V)=\dim V-\dim V\md G$, see~\cite[Eq.\,(8.1)]{VP}.
And for the equidimensionality of $\pi_{\q^*,Q}$, it suffices to prove that 
\[
 \dim\N_Q(\q^*)=\dim \q-\dim\q^*\md Q=\dim V+\dim\g -\dim V\md G-1=\dim\N_G(V)+\dim\g-1,
\]
where $\N_Q(\q^*)=\pi_{\q^*,Q}^{-1}(\pi_{\q^*,Q}(0))$.
It follows from {\sf (iii)} that 
\[
   \N_Q(\q^*)=\{(\xi,v)\mid v\in \N_G(V) \ \& \ 
   \hat F(\xi,v)=\langle F(v),\xi\rangle=0\} .
\]
In other words, $\N_Q(\q^*)=\bigl(\g^*\times \N_G(V)\bigr)\cap
\{\hat F=0\}$.
Under assumption {\color{MIXT}$(\ast)$}, we have 
$\dim\N_Q(\q^*)=\dim \N_G(V)+\dim\g-1$, as required. Then
$\eus S(\q)=\bbk[\q^*]$ is a free $\eus S(\q)^Q$-module; and, by a standard deformation argument, this implies that $\eus U(\q)$ is a free module over $\eus Z(\q)\simeq \eus S(\q)^Q$.
\end{proof}

Note that if $\N_G(V)$ contains finitely many $G$-orbits, then $\pi_{V,G}$ is 
equidimensional~\cite[\S\,5.2]{VP} and hence condition {\color{MIXT}$(\ast)$} is satisfied. 
\begin{rema} 
If $l\ge 1$ and $\N_G(V)$ contains finitely many $G$-orbits, then there is a general criterion for the equidimensionality of $\pi_{\q^*,Q}$ in terms of the stratification of $\N_G(V)$ determined by the covariants $F_1,\dots, F_l$. Namely, \\[.5ex]
\centerline{
  $\N_Q(\q^*)=\{(\xi,v)\mid v\in \N_G(V) \ \& \    
   \langle F_i(v),\xi\rangle=0 \quad i=1,\dots,l\}$}
\\[.5ex]
and using the  projection $\N_Q(\q^*)\to \N_G(V)$, $(\xi,v)\mapsto v$, one proves that 
\\[.5ex]
\centerline{$\pi_{\q^*,Q}$ is equidimensional \ $\Longleftrightarrow$ \  
$\dim\g_v+ \dim \langle F_1(v),\dots,F_l(v)\rangle \ge 2l$}
\\[.5ex]
for any $v\in\N_G(V)$. However, this condition is not easily verified in specific examples, if $l > 1$.
If $(G:V)$ is the isotropy representation of a symmetric variety such that $\gig$ is a torus, then a version of this condition is verified in \cite[Sect.\,5]{coadj}.
\end{rema}

\section{The codimension-$2$ condition for representations} 
\label{sect:codim2}

\noindent
In this section,  we provide some sufficient conditions for the \ctc to hold for $(G:V)$.
A $G$-stable divisor $D\subset V$ is said to be {\it bad}, if 
$\max_{v\in D}\dim G{\cdot}v< \max_{v\in V}\dim G{\cdot}v$. 
That is, if
\[
     \min_{v\in D}\dim G_v> \min_{v\in V}\dim G_v .
\]
Hence the \ctc holds for $(G:V)$  if and only if $V$ contains no bad divisors.

\begin{prop}   \label{prop:codim2-EQ}
Suppose that $G$ is reductive, the action $(G:V)$ is stable, and $\N_G(V)$ contains finitely many 
$G$-orbits. Then the \ctc holds for $(G:V)$.
\end{prop}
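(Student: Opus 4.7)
The plan is to argue by contradiction: if $V$ contains a bad divisor $D$, I will force $D$ to contain an entire irreducible component of $\N_G(V)$, and then show that no such component can lie in $V\setminus V_{\sf reg}$.

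First I would observe that a bad divisor is a cone. Indeed, $V\setminus V_{\sf reg}$ is closed, $G$-stable, and $\bbk^\times$-stable (since $G_{tv}=G_v$ for $t\in\bbk^\times$), so the connected group $\bbk^\times$ fixes each of its finitely many irreducible components. Next I would use that, by~\cite[\S\,5.2]{VP}, finiteness of the orbits in $\N_G(V)$ forces $\pi:=\pi_{V,G}$ to be equidimensional with fibre dimension $n:=\dim V-\dim V\md G$. Each irreducible component of $\N_G(V)$ is therefore $n$-dimensional and, since $\N_G(V)$ has only finitely many orbits, is the closure of a single $G$-orbit of dimension $n$, which consists of $G$-regular points. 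Hence $D$ cannot contain any irreducible component of $\N_G(V)$---this is the contradiction I aim for.

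Then I would analyse $\pi|_D\colon D\to V\md G$. If $\pi|_D$ were dominant, stability would produce a generic fibre $\pi^{-1}(y)=Gv_y$ with $v_y\in V_{\sf reg}$; being $G$-stable, $D$ would then contain the whole orbit $Gv_y$ and fail to be bad. Thus $\pi(D)\subsetneq V\md G$, and a routine fibre-dimension count using $\dim D=\dim V-1$ together with the equidimensionality of $\pi$ pins down $\dim\pi(D)=\dim V\md G-1$ and shows that, for a generic $y\in\pi(D)$, the intersection $D\cap\pi^{-1}(y)$ contains an entire $n$-dimensional irreducible component $C$ of $\pi^{-1}(y)$.

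The key move, which I expect to be the main obstacle, is to bridge from this fibre component back to the null-cone. Since $D$ is a cone, $tC\subset D$ for every $t\in\bbk^\times$, so $Z:=\overline{\bigcup_{t\in\bbk^\times}tC}$ is an irreducible $(n{+}1)$-dimensional subset of $D$ with $\pi(Z)=\overline{\bbk^\times y}\ni\pi(0)$. Upper semicontinuity of fibre dimension applied to $\pi|_Z$ then yields $\dim(Z\cap\N_G(V))\ge n$; being a closed subset of the $n$-dimensional variety $\N_G(V)$ of maximal dimension, it must contain an irreducible component of $\N_G(V)$. Since $Z\subset D$, this contradicts the conclusion of the second paragraph and completes the argument.
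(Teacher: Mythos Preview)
Your argument is correct, but it takes a longer route than the paper's at the final step. Both proofs agree up through showing that $\overline{\pi(D)}$ is a divisor in $V\md G$ and that $D\cap\pi^{-1}(y)$ has dimension $n$. The paper then finishes in one line: by \cite[\S\,5.2, Cor.\,3]{VP}, finiteness of orbits in $\N_G(V)$ forces \emph{every} fibre of $\pi$ to have finitely many $G$-orbits, so the $n$-dimensional $G$-stable closed set $D\cap\pi^{-1}(y)$ already contains an orbit of dimension $n$, hence a $G$-regular point. You avoid invoking this fact about arbitrary fibres and instead transport the problem back to the null-cone via the cone $Z=\overline{\bbk^\times C}$. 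That is a legitimate trade: your proof is more self-contained, at the price of the extra geometric step. Incidentally, for the non-dominance of $\pi|_D$ the paper cites \cite[Theorem\,1]{tainan}, whereas your direct stability argument is cleaner.

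Two places deserve a sentence of care. First, ``$\pi$ equidimensional'' only gives $\dim\N_G(V)=n$, not that every component of $\N_G(V)$ is $n$-dimensional; but your contradiction only needs that the $n$-dimensional irreducible set $Z\cap\N_G(V)$ produces an $n$-dimensional component of $\N_G(V)$, and such a component is the closure of a regular orbit---so the argument survives. Second, $\dim Z=n+1$ requires $y\ne\pi(0)$; you should remark that if $\dim V\md G\le 1$ then $\overline{\pi(D)}=\{\pi(0)\}$, whence $D\subset\N_G(V)$ and the second paragraph already gives the contradiction.
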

\begin{proof}
Since $\N_G(V)$ has finitely many orbits,  $\pi_{V,G}: V\to  V\md G$ is equidimensional and each fibre of 
$\pi_{V,G}$ also has finitely many orbits~\cite[\S\,5.2, Cor.\,3]{VP}. Assume that
$D$ is a ($G$-stable) bad divisor in $V$. 
Then $\pi_{V,G}(D)$ is a proper (closed) subvariety of $V\md G$, see e.g.~\cite[Theorem\,1]{tainan}, and 
since $\pi_{V,G}$ is equidimensional, $\pi_{V,G}(D)$ is actually a divisor
in $V\md G$. Hence $\dim D\cap \pi_{V,G}^{-1}(\xi)=\dim  \pi_{V,G}^{-1}(\xi)$ for any $\xi\in \pi_{V,G}(D)$ 
and therefore $D\cap \pi_{V,G}^{-1}(\xi)$ contains $G$-regular elements. Hence $D$ cannot be bad.
\end{proof}

\begin{ex}[Vinberg's $\theta$-groups \cite{vi76}]  \label{theta-1}
Let $\vartheta$ be an automorphism of $\g$ of finite order $k$. If $\varsigma=\sqrt[k]1$ is primitive, then 
$\g=\textstyle \bigoplus_{i\in \BZ/k\BZ}\g_i$, where $\g_i$ is the eigenspace of $\vartheta$ corresponding 
to $\varsigma^i$. The above decomposition is also called a {\it periodic grading} of $\g$.
Here $\g_0$ is reductive and each $\g_i$ is a $\g_0$-module. If $G_0$ is the connected subgroup of $G$ 
with $\Lie G_0=\g_0$, then the linear group $G_0\to GL(\g_1)$ is called a $\vartheta$-{\it group}. A 
fundamental invariant-theoretic property of $\vartheta$-groups is that $\N_{G_0}(\g_1)$ contains finitely 
many $G_0$-orbits and $\bbk[\g_1]^{G_0}$ is a polynomial ring. If $k=2$, then $(G_0:\g_1)$ is always 
stable. There are also many interesting examples of stable $\vartheta$-groups, if $k\ge 3$, 
see e.g.~\cite[\S\,9]{vi76}. 
\end{ex}

\begin{ex}[reduced $\theta$-groups]  \label{theta-2}
Let $\g=\bigoplus_{i\in\BZ}\g(i)$ be a $\BZ$-grading. Then $\g(i)=\{x\in\g \mid [h,x]=ix\}$ for a unique
semisimple element $h\in\g(0)$. Let $\g(0)'$ be the orthocomplement to $\bbk h$ in $\g(0)$ w.r.t. the Killing 
form on $\g$ and  $G(0)'\subset G(0)$ the corresponding connected subgroups of $G$. 
Here the reductive group $G(0)$ has finitely many orbits in each $\g(i)$ with $i\ne 0$~\cite{vi76}, while there is a 
dichotomy for $G(0)'$-orbits. Either the $G(0)'$-orbits in $\g(1)$ coincide with the $G(0)$-orbits, or
$\dim\g(1)\md G(0)'=1$ and the $G(0)'$-orbits in $\N_{G(0)'}\g(1)$ coincide with the $G(0)$-orbits~ \cite[Theorem\,2.9]{dG-V-Y}. In the latter case, the action $(G(0)':\g(1))$ is also stable. The linear groups of the form 
$G(0)'\to GL(\g(1))$ are called {\it reduced $\vartheta$-groups}.
\end{ex}

In the following assertion $G$ is not necessarily reductive.
\begin{thm}  \label{thm:kosik1}
Let $G:V_1\oplus V_2=V$ be a reducible representation. Suppose that generic isotropy groups
$S_i=\gig(G:V_i)$, $i=1,2$, exist and the \ctc holds for both $(S_1:V_2)$ and $(S_2:V_1)$. 
Then the \ctc holds also for $(G:V)$.
\end{thm}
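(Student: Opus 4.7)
\medskip

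\noindent\textbf{Proof plan.} Let $r=\min_{v\in V}\dim G_v$. Since the \ctc is equivalent to the statement that every irreducible $G$-stable divisor $D\subset V$ contains a $G$-regular element, the plan is to fix such a $D$ and produce a point $v\in D$ with $\dim G_v=r$. My first step is to pin down what $r$ is. For generic $v_1\in V_1$ one has $G_{v_1}\sim S_1$, and then for $v_2$ generic in $V_2$ (with respect to the $S_1$-action) the chain rule for stabilisers gives $G_{(v_1,v_2)}=(G_{v_1})_{v_2}$, whose dimension equals $\min_{w\in V_2}\dim (S_1)_w$; a straightforward upper-semicontinuity argument then identifies this common value with $r$. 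Symmetrically, $r=\min_{w\in V_1}\dim (S_2)_w$.

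Next I run a dichotomy based on the projection $p_1\colon D\to V_1$. Case A: $\overline{p_1(D)}=V_1$. Then the generic fibre $D\cap(\{v_1\}\times V_2)$ is a divisor in $V_2$; restricting to the open $G$-stable subset where $G_{v_1}$ is conjugate to $S_1$, this fibre is $G_{v_1}$-stable, hence (after conjugating so that $G_{v_1}=S_1$) it is an $S_1$-stable divisor in $V_2$. The hypothesis that \ctc holds for $(S_1:V_2)$ then supplies an $S_1$-regular $v_2$ in this divisor, so that $\dim G_{(v_1,v_2)}=\dim (S_1)_{v_2}=r$ and $(v_1,v_2)\in D$ is $G$-regular. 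Case B: $\overline{p_1(D)}\subsetneq V_1$. A dimension count forces $\dim\overline{p_1(D)}=\dim V_1-1$ and the generic fibre of $p_1|_D$ to coincide with $V_2$, so $D=D_1\times V_2$ for a $G$-stable divisor $D_1\subset V_1$. In particular $\overline{p_2(D)}=V_2$, and the symmetric argument (swapping the roles of $V_1$ and $V_2$ and invoking \ctc for $(S_2:V_1)$) yields a $G$-regular point in $D$.

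In each case $D$ contains a $G$-regular element, so $D$ is not bad, and hence \ctc holds for $(G:V)$. The only nontrivial point is the computation $r=\min_{w\in V_2}\dim(S_1)_w=\min_{w\in V_1}\dim(S_2)_w$, which is where the assumed existence of the generic isotropy groups $S_1$ and $S_2$ is used; beyond that, everything reduces to a fibrewise application of the hypothesis. The semisimplicity or reductivity of $G$ plays no role, matching the fact that $G$ is not assumed to be reductive in the statement.
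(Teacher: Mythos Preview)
Your argument is correct and follows essentially the same route as the paper's proof: project the putative bad divisor $D$ to $V_1$ (and $V_2$), and use the fibres as divisors for the restricted action of the generic isotropy group, where the assumed \ctc gives a regular point. The only differences are cosmetic: the paper splits into three cases according to which of $p_1,p_2$ is dominant (your Case~B is exactly their ``$p_1$ not dominant'' case, and your Case~A absorbs their other two), and the paper phrases the conclusion as ``$D_2$ would be bad for $(S_1:V_2)$, contradiction'' rather than directly exhibiting a $G$-regular point. Your explicit preliminary identification $r=\min_{w\in V_2}\dim(S_1)_w=\min_{w\in V_1}\dim(S_2)_w$ is left implicit in the paper but is used in the same way.
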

\begin{proof}
Assume that $D\subset V$ is a bad divisor. Consider the 
projections $p_i: D\to V_i$, {$i=1,2$}. 

\textbullet\quad If $p_1$ is dominant and $p_2$ is not, then $D=V_1\times D_2$, where $D_2\subset V_2$ is a $G$-stable divisor. Take a generic point $x_1^o\in V_1$ such that $G_{x_1^o}= S_1$. The fact that 
$D$ is bad means that 
\[
   \min_{x_2\in D_2}\dim(G_{x_1^o})_{x_2}=\min_{\bar x\in D} \dim G_{\bar x}>\min_{\bar v\in V} \dim G_{\bar v}=\min_{x_2\in V_2}\dim(G_{x_1^o})_{x_2} .
\]
That is, $D_2$ appears to be a bad divisor for $(S_1: V_2)$. Thus, this case is impossible.

\textbullet\quad If $p_2$ is dominant and $p_1$ is not, then $D=D_1\times V_2$ and the argument is
"symmetric".

\textbullet\quad If both $p_1,p_2$ are dominant, then we again can take a point $\bar x=(x_1,x_2)\in D$
such that $G_{x_1}=S_1$. Here $p_1^{-1}(x_1)=\{x_1\}\times D_2$ and the similar argument shows that
$D_2$ is a bad divisor for $(S_1:V_2)$.
\end{proof}

\begin{Not}   \label{notation-phi}
In specific examples and the tables in Appendix~\ref{sect:tables}, 
we identify the representations $V$ of semisimple groups with their highest weights, using the {\it multiplicative\/} notation and the
Vinberg--Onishchik numbering of the fundamental weights. For instance, if $\vp_1,\dots,\vp_n$ are the fundamental weights of a simple algebraic group $G$, then $V=\vp_1^2+\vp_{n-1}$ stands for the direct sum of two simple $G$-modules with highest weights $2\vp_1$ and $\vp_{n-1}$. If $G=G_1\times G_2\times\ldots$ is semisimple, then the fundamental weights of the first (resp. second) factor are denoted by $\{ \vp_i\}$ (resp. $\{ \vp'_i\}$) and so on\dots \ The dual $G$-module for $\psi$ is denoted by $\psi^*$.
We omit the index for the unique fundamental weight of $SL_2$.
\end{Not}

\begin{ex}    \label{ex:kosik1}
We provide several cases, where the last theorem allows us to check the 
{codimension}-$2$ condition.
\\ \indent 
${\mathsf 1^o}$.  $G=SL_n$, $V_1=\vp_1^2$, and $V_2=\vp_2$. Here $S_1=SO_n$ and 
$(S_1:V_2)$ is equivalent to the adjoint representation of $SO_n$ modulo a trivial summand.
If $n$ is even, then $S_2=Sp_{n}$ and $(S_2:V_1)$ is equivalent to the adjoint representation of 
$Sp_n$ modulo a trivial summand. This already shows that \ctc holds if $n$ is even. 
For $n$ odd, $S_2$ is not reductive and the only {\sf a priori} possible bad divisor is $D_1\times V_2$, where $D_1$ consists of the symmetric matrices with $\det=0$. Here a direct calculation of stabilisers shows that this divisor is not bad. Thus, the 
\ctc holds for all $n$.

${\mathsf 2^o}$.  $G=SL_n$, $V_1=\vp_1^2$, and $V_2=\vp_2^*=\vp_{n-2}$. This is similar to ${\mathsf 1^o}$.

${\mathsf 3^o}$.  $G=SL_n\times SL_n$ and $V_1=V_2=\vp_1\vp'_1$. Here $S_1=S_2=\Delta_{SL_n}\simeq SL_n$
and $(S_1: V_2)$ is equivalent to the adjoint representation of $SL_n$ modulo a trivial summand.

${\mathsf 4^o}$.  $G=Sp_6$ and $V_1=V_2=\vp_2$. Here $S_1=S_2=(SL_2)^3$ and 
and $(S_1: V_2)$ is equivalent to $(SL_2\times SL_2\times SL_2: \vp\vp'+\vp\vp''+\vp'\vp'')$
 modulo a 2-dimensional trivial summand. Applying Theorem~\ref{thm:kosik1} to the last representation, one readily obtains the \ctc.
\end{ex}

Below is a variation of Theorem~\ref{thm:kosik1} that concerns the case in which $V_1\simeq V_2$.

\begin{thm}   \label{thm:kosik2}
For any representation $G\to GL(V)$, one naturally defines the representation of $\hat G=G\times SL_2$ in 
$\hat V=V\otimes \bbk^2$. Let $G_\ast$ be a generic isotropy group for $(G:V)$. If \ctc holds for 
$(G_\ast: V)$ and $\mathsf{g.s.}(\g:V)=\mathsf{g.s.}(\hat \g:\hat V)$, then 
\ctc also holds for $(\hat G:\hat V)$. 
\end{thm}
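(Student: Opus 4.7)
The plan is to argue by contradiction along the lines of Theorem~\ref{thm:kosik1}: assume a bad $\hat G$-stable divisor $D\subset\hat V$ exists and extract from it a bad $G_\ast$-stable divisor in $V$, contradicting the assumed \ctc for $(G_\ast:V)$.

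First I would analyse $D$ via the two projections $p_i:\hat V=V\oplus V\to V$. Both are $G$-equivariant, and the element $\bigl(\begin{smallmatrix}0&1\\-1&0\end{smallmatrix}\bigr)\in SL_2$ swaps them up to sign, so $p_1(D)=p_2(D)$ as closed $G$-stable subvarieties of $V$. If both were proper subvarieties of $V$, then
\[
  \dim D\le\dim p_1(D)+\dim p_2(D)\le 2\dim V-2,
\]
contradicting $\dim D=2\dim V-1$. Hence $p_1(D)=V$, and for a generic $v_1\in V_{\sf reg}$ with $G_{v_1}=G_\ast$ the fibre $p_1^{-1}(v_1)\cap D=\{v_1\}\times D_{v_1}$ is such that $D_{v_1}\subset V$ is a divisor. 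Moreover $D_{v_1}$ is $G_\ast$-stable, because $G_\ast=G_{v_1}$ preserves the fibre $\{v_1\}\times V$ as well as $D$.

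Next I would verify that $D_{v_1}$ is bad for $(G_\ast:V)$. A direct Lie-stabiliser computation -- writing $(X,Y)\in\hat\g_{(v_1,v_2)}$ as
\[
  Xv_1+Y_{11}v_1+Y_{12}v_2=0,\qquad Xv_2+Y_{21}v_1-Y_{11}v_2=0
\]
-- shows that the natural inclusion $(\g_\ast)_{v_2}\hookrightarrow\hat\g_{(v_1,v_2)}$, $X\mapsto(X,0)$, makes the difference
\[
  \delta(v_2):=\dim\hat\g_{(v_1,v_2)}-\dim(\g_\ast)_{v_2}
\]
upper semicontinuous in $v_2$. The hypothesis $\mathsf{g.s.}(\g:V)=\mathsf{g.s.}(\hat\g:\hat V)$ translates, via an orbit-dimension count once $v_1$ is fixed generic, into the statement that $\delta$ attains its generic value already on a $G_\ast$-generic $v_2\in V$. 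Consequently the two upper-semicontinuous functions $v_2\mapsto\dim\hat\g_{(v_1,v_2)}$ and $v_2\mapsto\dim(\g_\ast)_{v_2}$ reach their minima simultaneously and jump simultaneously. Since $D$ is bad, $\dim\hat\g_{(v_1,v_2)}$ strictly exceeds its generic value everywhere on $D_{v_1}$; hence so does $\dim(\g_\ast)_{v_2}$, i.e.\ every $v_2\in D_{v_1}$ is $G_\ast$-non-regular. Thus $D_{v_1}$ is a bad $G_\ast$-stable divisor in $V$, contradicting \ctc for $(G_\ast:V)$.

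The main obstacle is the simultaneous-jump statement: a priori $\hat\g_{(v_1,v_2)}$ may acquire a nontrivial $\mathfrak{sl}_2$-component whenever an $X\in\g$ preserves $\mathrm{span}(v_1,v_2)$ and acts on it traceless-ly, so the reduction to $(\g_\ast)_{v_2}$ is not purely formal. The hypothesis $\mathsf{g.s.}(\g:V)=\mathsf{g.s.}(\hat\g:\hat V)$ is precisely what controls this extra $\mathfrak{sl}_2$-contribution and keeps $\delta(v_2)$ locally constant on a dense open subset containing the generic points of $D_{v_1}$, thereby enabling the passage from a bad divisor for $(\hat G:\hat V)$ to one for $(G_\ast:V)$.
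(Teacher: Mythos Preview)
Your overall strategy mirrors the paper's closely: project a putative bad $\hat G$-divisor $\hat D$ onto the two copies of $V$, use $SL_2$-stability to see both projections are dominant, pass to the fibre $D_{v_1}$ over a $G$-generic $v_1$, and invoke the \ctc for $(G_\ast:V)$ on that fibre. The paper then simply exhibits a $G_\ast$-regular $x_2\in D_{v_1}$---equivalently a point $(x_1,x_2)\in\hat D$ with $\dim G_{(x_1,x_2)}=\min_{\hat v}\dim G_{\hat v}$---and concludes ``This means that $\hat D$ cannot be bad,'' treating the passage from $G$-regularity in $\hat V$ to $\hat G$-regularity as immediate from the hypothesis $\mathsf{g.s.}(\g{:}V)=\mathsf{g.s.}(\hat\g{:}\hat V)$.

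You rightly identify this passage as the heart of the matter and try to make it explicit via $\delta(v_2)=\dim\hat\g_{(v_1,v_2)}-\dim(\g_\ast)_{v_2}$. But your claim that $\delta$ is upper semicontinuous is not justified: since $(\g_\ast)_{v_2}=\g_{(v_1,v_2)}$ is exactly the kernel of the projection $\hat\g_{(v_1,v_2)}\to\mathfrak{sl}_2$, the quantity $\delta(v_2)$ is the dimension of the \emph{image} of that projection, and image-dimensions in algebraic families of subspaces are in general \emph{lower} semicontinuous, not upper. So your ``simultaneous-jump'' argument does not follow, and your final paragraph remains an assertion rather than a proof. Concretely, a point can be $G$-regular in $\hat V$ yet not $\hat G$-regular whenever some $X\in\g$ preserves $\mathrm{span}(v_1,v_2)$ and acts there by a nonzero traceless matrix (for instance $(x_1,x_1)$ with $x_1$ $G$-generic, where $\hat\g_{(x_1,x_1)}$ picks up a one-dimensional $\mathfrak{sl}_2$-piece); the hypothesis guarantees this is non-generic, but you have not shown why it cannot persist along an entire divisor. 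In short, your proposal is the paper's argument with the delicate step honestly flagged but not closed.
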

\begin{proof}
Since $\hat V\vert_{G}=V\oplus V$,  Theorem~\ref{thm:kosik1} shows that the \ctc holds for
$(G:V\oplus V)$.  Let $\hat D\subset \hat V$ be a  $\hat G$-stable divisor. As above, 
consider the $G$-equivariant projections $p_i:\hat D\to V^{(i)}$, where $V^{(i)}$ is the $i$-th copy of $V$ and $i=1,2$. Since $\hat D$ 
is $SL_2$-stable, both projections must be dominant. Take $(x_1,x_2)\in \hat D$ such that  
$G_{x_1}=G_\ast$. Since $x_1$ is $G$-generic and \ctc holds for $(G_\ast: V)$,  there is $x_2\in p_2(p_1^{-1}(x_1))$ such that
\[
  \dim (G_{x_1})_{x_2}=\min_{v_2\in V^{(2)}}\dim (G_{x_1})_{v_2}=\min_{\bar v\in V\oplus V}\dim G_{\bar v}=
  \min_{\hat v\in \hat V} \dim G_{\hat v} .
\]
This means that $\hat D$ cannot be bad.
\end{proof}

\begin{ex}   \label{ex:plus-SL2}
Theorem~\ref{thm:kosik2} applies, if we add the factor $SL_2$ to $G$ in Example~\ref{ex:kosik1},
${\mathsf 3^o}$ \& ${\mathsf 4^o}$.
\end{ex}

\begin{thm}    \label{thm:kosik3}
Suppose that the  \ctc holds for $(G_1\times G_2: V_1\otimes V_2=V)$ and 
$\mathsf{g.s.}(\g_1\times \g_2: V_1\otimes V_2)=\mathsf{g.s.}(\g_1: V_1^{\oplus d})$, where 
$d=\dim V_2$. Then \ctc also holds for $(G_1: V_1^{\oplus d})$.
\end{thm}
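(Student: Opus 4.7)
The key observation is that $V = V_1\otimes V_2$, regarded as a $G_1$-module, is canonically $V_1^{\oplus d}$, and the extra factor $G_2$ acts on $V$ by $G_1$-equivariant linear automorphisms. So $G_1$ and $G_2$ commute in their actions on $V$, which means that $G_1$-stabilisers are preserved under the $G_2$-action: if $v' = g_2\cdot v$, then $(G_1)_{v'} = (G_1)_v$. This simple fact will drive everything. The plan is to assume, towards a contradiction, that $(G_1:V_1^{\oplus d})$ admits a bad divisor $D\subset V$, and to produce from it either a bad divisor for $(G_1\times G_2:V)$ (contradicting the hypothesis) or a $G_1$-generic point of $V$ with too-large $G_1$-stabiliser (contradicting genericity).

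After replacing $D$ with an irreducible component (still $G_1$-stable since $G_1$ is connected, and still bad), form $\hat D := \overline{G_2\cdot D}$. This is the closure of the image of $G_2\times D \to V$, hence irreducible; it is $G_2$-stable by construction and $G_1$-stable because $G_1$ and $G_2$ commute on $V$ and $D$ is $G_1$-stable. Thus $\hat D$ is $(G_1\times G_2)$-stable and contains $D$, so irreducibility of $V$ forces exactly one of two alternatives: either $\hat D = D$, or $\hat D = V$.

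In the first case, $D$ is a $(G_1\times G_2)$-stable divisor. For $v\in D$ one has $(\g_1\times\g_2)_v \supset (\g_1)_v$, so
\[
\dim(\g_1\times\g_2)_v \;\ge\; \dim(\g_1)_v \;>\; \dim\mathsf{g.s.}(\g_1:V_1^{\oplus d}),
\]
and by hypothesis the right-hand side equals $\dim\mathsf{g.s.}(\g_1\times\g_2:V)$. So $D$ is a bad divisor for $(G_1\times G_2:V)$, contradicting the \ctc that we assumed for this action. In the second case, $G_2\cdot D$ contains a dense open subset of $V$. Choose a $G_1$-generic $v\in V$ (so $\dim(G_1)_v = \dim\mathsf{g.s.}(\g_1:V_1^{\oplus d})$) that also lies in $G_2\cdot D$, write $v = g_2\cdot v'$ with $v'\in D$, and use the commutativity of the $G_1$- and $G_2$-actions to deduce $(G_1)_v = (G_1)_{v'}$. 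But $v'\in D$ forces $\dim(G_1)_{v'} > \dim\mathsf{g.s.}(\g_1:V_1^{\oplus d})$, contradicting the genericity of $v$.

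The only place requiring any care is verifying that the two alternatives for $\hat D$ are exhaustive, which comes down to irreducibility of $\hat D$ plus the dimension jump $\dim\hat D \ge \dim D + 1 = \dim V$ whenever $\hat D \neq D$; this is the main (and essentially only) geometric step. Everything else is bookkeeping with stabiliser dimensions, using that the hypothesis $\mathsf{g.s.}(\g_1\times\g_2:V)=\mathsf{g.s.}(\g_1:V_1^{\oplus d})$ equates precisely the two minima of stabiliser dimensions that appear in the two cases.
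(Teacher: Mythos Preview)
Your proof is correct and follows essentially the same line as the paper's: a bad divisor $D$ for $(G_1:V)$ is shown to be a bad divisor for $(G_1\times G_2:V)$, contradicting the assumed \ctc. The paper asserts in one line that $D$ is $G_1\times G_2$-stable, while you justify this via the dichotomy $\hat D = D$ versus $\hat D = V$ and rule out the second alternative directly using commutativity of the two actions; your case analysis can be read as supplying the detail behind the paper's terse claim.
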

\begin{proof}
Assume that $D\in V$ is a bad divisor for $(G_1{:} V_1^{\oplus d})$. 
Then $\dim G_1{\cdot}u< \max_{v\in V}\dim G_1{\cdot}v$  for all $u\in D$. 
The coincidence of generic stabilisers implies that $D$ is also $G_1\times G_2$-stable and then 
\[
  \dim G{\cdot}u< \max_{v\in V}\dim G_1{\cdot}v +\dim G_2=\max_{v\in V}\dim G{\cdot}v \quad \text{ for all } u\in D .
\]
Hence $D$ is bad for $(G:V)$, too. A contradiction!
\end{proof}
\begin{ex}   \label{ex:perexod}
The representation $(SL_6\times SL_3: \vp_2\vp'_1)$ is the $\vartheta$-group associated with an 
automorphism of order $3$ of $\GR{E}{7}$, see item~5 in the table in \cite[\S\,9]{vi76}. A generic isotropy 
group $H$ here is reductive (namely,  $\Lie H=\te_1$). Therefore, this action is stable and hence \ctc holds 
here (use Prop.~\ref{prop:codim2-EQ}). All assumptions of Theorem~\ref{thm:l=1} are satisfied here, and therefore $\q=(\mathfrak{sl}_6{\times}\mathfrak{sl}_3)\ltimes (\vp_2\vp'_1)^*$ satisfies the Kostant criterion and $\eus U(\q)$ is a free module over $\eus Z(\q)$.
\\ \indent
Forgetting about $SL_3$, we obtain the representation $(SL_6: 3\vp_2)$. Since both have the same
generic stabilisers (namely $\te_1$), the \ctc also holds for the latter in view of Theorem~\ref{thm:kosik3}.
Here the algebra $\bbk[3\vp_2]^{SL_6}$ is still polynomial~\cite{ag79,gerry1}, but the equidimensionality 
of the quotient morphism fails~\cite{gerry2}. Hence $\q'=\mathfrak{sl}_6\ltimes 3\vp_2^*$ satisfies the 
Kostant criterion, but $\eus U(\q')$ is {\bf not} a free $\eus Z(\q')$-module.
\end{ex}

\section{Constructing covariants for semi-direct products, I}  
\label{sect:primery}

\noindent
If an action $(G:V)$ is associated with a periodic or $\BZ$-grading of a simple Lie algebra, 
then usually most of the assumptions of Theorems~\ref{thm:main1} and \ref{thm:main2} are automatically 
satisfied for it. The most appealing and non-trivial task is to produce linearly independent morphisms 
$\{F_i\}$ in $\Ker(\phi)$ such that \eqref{eq:summa} holds.

\begin{ex}   \label{ex:tri-SL} $G=SL(V_1){\times} SL(V_2){\times} SL(V_3)$ and  
$V=V_1{\otimes} V_2{\otimes} V_3$, where $\dim V_1=\dim V_2=n$ and $\dim V_3=2$. In other words,
$G=SL_n{\times} SL_n{\times}SL_2$ and $V=\vp_1\vp'_1\vp''\simeq \bbk^n\otimes\bbk^n\otimes \bbk^2$.
\\ \indent Upon the 
restriction to $\tilde G:=SL(V_1)\times SL(V_2)$, the space $V$ splits in two copies of $V_1\otimes V_2$. We regard the $\tilde G$-module $V_1\otimes V_2$ as the space $n$ by $n$ matrices, 
equipped with the action $(g_1,g_2){\cdot}A=g_1 Ag_2^{-1}$, where $g_i\in SL(V_i)$. The 
corresponding action of $(s_1,s_2)\in\tilde \g$ is given by $(s_1,s_2){\cdot}A=s_1 A-As_2$. We think of elements
of $V$ as pairs $(A,B)$ of $n$ by $n$ matrices, where the action of 
$\left(\begin{array}{cc} \ap & \beta \\ \gamma & \delta \end{array}\right)\in SL_2=SL(V_3)$ is given by 
$(A,B)\mapsto (\ap A+\beta B, \gamma A+\delta B)$. By Examples~\ref{ex:kosik1}($3^o$) and \ref{ex:plus-SL2}, 
the \ctc holds for both $(G:V)$ and $(\tilde G:V)$.
The algebra $\bbk[V]^{\tilde G}$ is polynomial and its
basic invariants are the coefficients of the characteristic polynomial 
\[
   \det (A+\lb B)=\sum_{i=0}^n f_i(A,B)\lb^i, \text{ where  $f_0(A,B)=\det A$ and $f_n(A,B)=\det B$},
\]
see  e.g.~\cite[Theorem\,4]{pervushin02}.
Since $\deg f_i(A,B)=n$ for all $i$, $q(V\md \tilde G)=n(n+1)$. Looking at the weights of the 
polynomials $\{f_i(A,B)\}_{i=0}^n$ w.r.t. a maximal torus in $SL_2$, one realises that 
$V\md\tilde G$ is isomorphic to $(\vp'')^n$ (the space of binary forms of degree $n$) as an $SL_2$-module. 
(We also write $\sfr_n$ for this $SL_2$-module.) It is known that  
$q(\sfr _n\md SL_2)=\dim \sfr _n=n+1$ for $n\ge 3$. In our case, the coordinates in $\sfr _n=V\md \tilde G$ 
are of degree $n$ w.r.t. the initial grading of $\bbk[V]$. Therefore, 
\[
    q(V\md G)=n{\cdot}q(\sfr _n\md SL_2)= n(n+1)=q(V\md \tilde G) \ \text{ if } \ n\ge 3 . 
\]
It is easily verified that $\tilde H:=\gig(\tilde G:V)\simeq \BT_{n-1}$ for any $n\ge 2$, where the torus 
$\BT_{n-1}$ is diagonally embedded in $\tilde G\simeq SL_n\times SL_n$. Furthermore, the identity 
component of $H=\gig(G:V)$ is the same torus for $n\ge 3$. In other words, 
$\h=\mathsf{g.s.}(\g{:}V)=\mathsf{g.s.}(\tilde\g{:}V)=\tilde \h$ for $n\ge 3$. (See Example~\ref{ex:cubic-matr} for $(G:V)$ with $n=2$.)
However, $H$ can be disconnected. Using the isomorphism $V\md \tilde G\simeq \sfr_n$, one verifies that $H/H^0$ is isomorphic to $\gig(SL_2:\sfr_n)$, and the latter is isomorphic to 
\\ \indent
{\color{MIXT}\textbullet} \  $\BZ_3,$ if $n=3$; \ {\color{MIXT}\textbullet} \  $\BZ_2\ltimes\BZ_4,$ if $n=4$; 
\ {\color{MIXT}\textbullet} \  $\{1\},$ if $n\ge 5$ is odd; \ {\color{MIXT}\textbullet} \  $\BZ_2,$ if $n\ge 6$ is even.

We will compare below the coadjoint representations of the Lie algebras $\q=\g\ltimes V^*$ and 
$\tilde\q=\tilde\g\ltimes V^*$ for $n\ge 3$.
Accordingly, we consider the corresponding connected groups $Q$ and $\tilde Q$,  
two morphisms of $\bbk[V]$-modules
\[
    \phi:  \Mor(V,\g) \to \Mor(V,V) \ \text{ and } \ \tilde\phi:  \Mor(V,\tilde\g) \to \Mor(V,V) 
\]
and the corresponding morphisms $\phi_G$ and $\tilde\phi_{\tilde G}$ of modules of covariants (see 
Section~\ref{sect:rank}). Clearly, $\Mor(V,\tilde\g)\subset \Mor(V,\g)$ and 
$\tilde\phi=\phi\vert_{\Mor(V,\tilde\g)}$. Note also that $R_u(Q)=R_u(\tilde Q)$.

For $A\in\gln$, let $A^*$ denote the {\it adjugate\/} of $A$, i.e., the transpose of its cofactor matrix. (Hence 
$AA^*=A^*A=(\det A)I$.)  Note that $A\mapsto A^*$ is a polynomial mapping of degree
$n-1$.  Let $A\mapsto \bar A=A-\frac{\tr(A)}{n}I$ denote the projection from $\gln$ to $\sln$. 

Consider the morphism $F\in\Mor(V,\tilde\g)$, where
$F(A,B)=(\ov{BA^*}, \ov{A^*B})\in\tilde\g\subset\g$. Here $\ov{BA^*}$ (resp. $\ov{A^*B}$) is regarded as 
an element of $\mathfrak{sl}(V_1)$ (resp. $\mathfrak{sl}(V_2)$).
One readily verifies that $F(A,B){\cdot}(A,B)=0$, cf. the proof of 
Theorem~\ref{thm:polarisation-para-matric}(i). Hence $F\in \Ker(\tilde\phi)\subset \Ker(\phi)$.
Since the map $A\mapsto A^*$ has degree $n-1$, we obtain 
$\deg F=n$.  We will see below that the morphism $F$ is $\tilde G$-equivariant.
However, it is not $SL_2$-equivariant, hence not $G$-equivariant. Still, $F$ is a lowest weight vector in
a simple $SL_2$-module $\sfr_{n-2}$. Indeed, for any $\gamma$ we have 
\[
   F(A,\gamma A+B)=(\ov{(\gamma A+B)A^*}, \ov{A^*(\gamma A+B)})=
   \ov{\gamma AA^*+BA^*}, \ov{\gamma A^*A+A^*B})=F(A,B) ,
\]
i.e.,  the subgroup $\{\begin{pmatrix} 1 & 0\\ \gamma  & 1\end{pmatrix}\mid \gamma \in \bbk\}\subset SL_2$ stabilises  $F$. 
By a direct calculation, we also have
$g\ast F=t^{2-n}F$  for $g=\begin{pmatrix} t & 0\\ 0 & t^{-1} \end{pmatrix}$.
\\ \indent
Having at hand one suitable covariant, we perform a ``polarisation''. Consider
\[
   \mathbb F_\lb(A,B):=F(A+\lb B, B)=(\ov{B(A+\lb B)^*}, \ov{(A+\lb B)^*B})=
   \sum_{i=0}^{n-1} F_i(A,B)\lb^i .
\]
Note that $F_0=F$ and $F_{n-1}(A,B)=(\ov{BB^*}, \ov{B^*B})=0$. That is, we obtain only the 
morphisms $F_0,F_1,\dots,F_{n-2}$ in $\Mor(V,\tilde \g)$. It follows from the previous observation 
that the $\bbk$-linear span $\langle F_0,F_1,\dots,F_{n-2}\rangle$ is an $SL_2$-module 
isomorphic to $\sfr_{n-2}$.

\begin{thme}   \label{thm:polarisation-para-matric} We have 
\begin{itemize}
\item[\sf (i)]  \ $\mathbb F_\lb$ is a $\tilde G$-equivariant morphism for any $\lb\in\bbk$. Therefore,
all $\{F_i\}$ are $\tilde G$-equivariant;
\item[\sf (ii)] \ $F_i\in \Ker(\tilde\phi)$ for all $i$.
\end{itemize} 
\end{thme}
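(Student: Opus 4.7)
The plan is to deduce both parts from two simple observations: the base covariant $F$ is itself $\tilde G$-equivariant (via the transformation law for the adjugate), and the polarization $(A,B)\mapsto(A+\lambda B, B)$ is both $\tilde G$-equivariant and compatible with the $\tilde\g$-action in a way that lets membership in $\Ker(\tilde\phi)$ propagate.

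\textbf{Part (i).} First I verify that $F$ itself is $\tilde G$-equivariant. Since $g_1,g_2\in SL_n$, one has $\det(g_1 A g_2^{-1})=\det A$, and using $A^*=(\det A)A^{-1}$ for invertible $A$ (then extending polynomially) gives the identity $(g_1 A g_2^{-1})^*=g_2 A^* g_1^{-1}$. Substituting,
\[
B'(A')^* = (g_1 B g_2^{-1})(g_2 A^* g_1^{-1})=g_1(BA^*)g_1^{-1}, \quad (A')^*B' = g_2(A^*B)g_2^{-1},
\]
and the trace-removing projection $\bar{\,\cdot\,}$ commutes with these conjugations, so $F((g_1,g_2){\cdot}(A,B))=(g_1,g_2){\cdot}F(A,B)$. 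Next, the map $\sigma_\lb(A,B):=(A+\lb B, B)$ is $\tilde G$-equivariant because the $\tilde G$-action leaves the linear-combination structure on the $V_3=\bbk^2$ component untouched. Hence $\mathbb F_\lb=F\circ\sigma_\lb$ is $\tilde G$-equivariant for every $\lb$. Writing $\mathbb F_\lb=\sum_i F_i\lb^i$ and extracting the coefficient of $\lb^i$ from the equivariance identity shows that each $F_i$ is $\tilde G$-equivariant individually.

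\textbf{Part (ii).} Here I propagate the relation $F\in\Ker(\tilde\phi)$ from $(A+\lb B,B)$ back to $(A,B)$. Let $(X,Y):=\mathbb F_\lb(A,B)=F(A+\lb B, B)$. Applying the hypothesis $F\in\Ker(\tilde\phi)$ at the point $(A+\lb B, B)$ yields the two matrix identities
\[
X(A+\lb B) - (A+\lb B)Y = 0, \qquad XB - BY = 0.
\]
Subtracting $\lb$ times the second equation from the first gives $XA-AY=0$. Combined with $XB-BY=0$, this says precisely that $(X,Y){\cdot}(A,B)=0$ in $V$, so $\mathbb F_\lb(A,B)\in\tilde\g_{(A,B)}$ for every $\lb$ and every $(A,B)\in V$, i.e.\ $\mathbb F_\lb\in\Ker(\tilde\phi)$. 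Since $\mathbb F_\lb(A,B){\cdot}(A,B)=\sum_i\bigl(F_i(A,B){\cdot}(A,B)\bigr)\lb^i$ vanishes identically in $\lb$, each coefficient $F_i(A,B){\cdot}(A,B)$ is zero, which is exactly the statement $F_i\in\Ker(\tilde\phi)$.

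There is no real obstacle here: the only content is the observation that the first of the two defining equations of $F(A+\lb B,B)\in\tilde\g_{(A+\lb B,B)}$ is a $\lb$-linear combination of the two equations defining $F(A+\lb B,B)\in\tilde\g_{(A,B)}$, so one of them being an identity in $\lb$ together with the other free variable forces both. The mild subtlety to flag is that the trace-correcting projection $\bar{\,\cdot\,}$ used to land in $\sln\oplus\sln$ does not interfere: it commutes with $\Ad(g_1)\oplus\Ad(g_2)$ (settling equivariance) and the scalar part it removes acts trivially on $V$ (so removing it does not change the action on $(A,B)$).
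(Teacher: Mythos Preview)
Your proof is correct, and in fact more transparent than the paper's own argument. The paper verifies both parts by direct matrix computation: for {\sf(i)} it expands $\mathbb F_\lambda(g_1Ag_2^{-1},g_1Bg_2^{-1})$ on the open locus where $A+\lambda B$ is invertible, and for {\sf(ii)} it computes $\mathbb F_\lambda(A,B){\cdot}(A,B)$ by writing $B(A+\lambda B)^*A$ and its transpose-partner as formal power series in $\lambda A^{-1}B$ and checking that the two series coincide term by term.

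Your route is more conceptual. For {\sf(i)} you factor $\mathbb F_\lambda=F\circ\sigma_\lambda$ and observe that each factor is $\tilde G$-equivariant separately; the adjugate identity $(g_1Ag_2^{-1})^*=g_2A^*g_1^{-1}$ handles $F$, and $\sigma_\lambda$ is obviously equivariant because $\tilde G$ acts diagonally on the two copies of $V_1\otimes V_2$. For {\sf(ii)} your linear trick is the real gain: knowing $F\in\Ker(\tilde\phi)$ at the shifted point $(A+\lambda B,B)$ gives two identities, and subtracting $\lambda$ times the second from the first immediately yields $\mathbb F_\lambda(A,B){\cdot}(A,B)=0$ with no series expansion at all. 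This argument also makes it visible that the same reduction would work for any polarisation of a covariant in $\Ker(\tilde\phi)$ along a $\tilde G$-equivariant linear pencil, whereas the paper's computation is specific to this $F$. Your closing remark that the scalar removed by the bar projection is always of the form $(cI,cI)$ (since $\tr(B(A+\lambda B)^*)=\tr((A+\lambda B)^*B)$), hence acts trivially on $V$, is correct and worth keeping as a sanity check.
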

\begin{proof}
(i)  By definition,
\[
  \mathbb F_\lb(g_1Ag_2^{-1}, g_1Bg_2^{-1})=\bigl(\ov{g_1Bg_2^{-1}{\cdot}(g_1(A+\lb B)g_2^{-1})^*}, 
  \ov{(g_1(A+\lb B)g_2^{-1})^*{\cdot}g_1Bg_2^{-1}}\bigr).
\]
If $A+\lb B$ is invertible, then the first component is being transformed as follows:
\begin{multline*}
  \ov{g_1Bg_2^{-1}(g_1(A+\lb B)g_2^{-1})^*}=\det(A+\lb B)\ov{g_1Bg_2^{-1}{\cdot}(g_1(A+\lb B)g_2^{-1})^{-1}}\\
  =\det(A+\lb B)\ov{g_1B(A+\lb B)^{-1}g_1^{-1}}=g_1\ov{B(A+\lb B)^*}g_1^{-1} .
\end{multline*}
Likewise, for the second component, we obtain $g_2\bigl(\ov{(A+\lb B)^*B}\bigr)g_2^{-1}$.
Thus, \\[.3ex]
\centerline{$\mathbb F_\lb((g_1,g_2){\cdot}(A,B))=
\mathbb F_\lb(g_1Ag_2^{-1}, g_1Bg_2^{-1}) =(g_1,g_2){\cdot}
\mathbb F_\lb(A,B)$ }
\vskip.5ex\noindent
whenever $A+\lb B$ is invertible.
Since $\mathbb F_\lb$ is a polynomial mapping that is $\tilde G$-equivariant on the open subset of triples
$(A,B,\lb)$ such that $A+\lb B$ is invertible, it is always equivariant.

(ii) It suffices to verify that  $\mathbb F_\lb(A,B){\cdot}(A,B)=0$ for any $\lb$. The first component in the 
LHS equals
\begin{multline}  \label{eq:difference}
   \ov{B(A{+}\lb B)^*}A-A\ov{(A{+}\lb B)^*B} \\  
   = {B(A{+}\lb B)^*}A-\frac{\tr B(A{+}\lb B)^*}{n}A-A{(A{+}\lb B)^*B}+\frac{\tr(A{+}\lb B)^*B}{n}A .
\end{multline}
Now, if both $A$ and  $A{+}\lb B$ are invertible, then
\begin{multline*}
   {B(A+\lb B)^*}A=\det(A{+}\lb B){\cdot}B(A{+}\lb B)^{-1}A=\det(A{+}\lb B){\cdot}B(A(I{+}A^{-1}B))^{-1}A\\
   =\det(A{+}\lb B){\cdot}B(I{+}A^{-1}B)^{-1}=\det(A{+}\lb B){\cdot}(B{-}\lb BA^{-1}B{+}\lb^2 BA^{-1}BA^{-1}B{-}\cdots) .
\end{multline*}
A similar transform yields the very same formula for $A{(A{+}\lb B)^*B}$. Since the difference in 
\eqref{eq:difference} vanishes on the open subset of triples $(A,B,\lb)$, where $A$ and  $A{+}\lb B$ are invertible, it is identically zero. And likewise for the second component in $\mathbb F_\lb(A,B){\cdot}(A,B)$.
\end{proof}

\begin{rema} Permuting $A$ and $B$ in the definition of $F=F_0$, one defines the companion morphism
$\hat F\in\Mor(V,\tilde\g)$ by $\hat F(A,B)=(\ov{AB^*}, \ov{B^*A})$. Then we can prove that 
$\hat F=-F_{n-2}$.
\end{rema}

Note that $\sum_{i=0}^{n-2} \deg F_i=n(n-1)=\dim V-q(V\md \tilde G)=\dim V-q(V\md G)$.
Hence $\tilde G, V,\tilde\q,$ and the covariants $F_0,\dots,F_{n-2}$ satisfy all the assumptions of Theorems~\ref{thm:main1} and \ref{thm:main2}. Hence 
{\sl \begin{itemize}
\item \ $\tilde \q^*\md R_u(\tilde Q)\simeq V\times \mathbb A^{n-1}$ and 
   $\tilde \q^*\md \tilde Q\simeq V\md \tilde G\times \mathbb A^{n-1}\simeq \mathbb A^{2n}$;
\item \ $\Ker(\tilde\phi)$ (resp. $\Ker(\tilde\phi_{\tilde G})$) is a free $\bbk[V]$
(resp. $\bbk[V]^{\tilde G}$)  -module  with basis $F_0,\dots,F_{n-2}$;
\item \ the Kostant criterion holds for 
$\tilde \q=(\sln\times\sln) \ltimes (\bbk^n\otimes\bbk^n\otimes \bbk^2)^*$. 
\end{itemize}
}
\noindent
However, $G,V,$ and $\q=(\sln\times\sln\times\tri) \ltimes (\bbk^n\otimes\bbk^n\otimes \bbk^2)^*$ do 
{\bf not} satisfy all the assumptions of Theorem~\ref{thm:main2}. For, either 
$\h\ne \h^H$ ($n=3,4$) or $H$ is contained in a proper normal subgroup of $G$ ($n\ge 5$).
But Theorem~\ref{thm:main1} still applies, and we have
$\q^*\md R_u(Q)\simeq V\times \mathbb A^{n-1}$. Then 
$\q^*\md \tilde Q\simeq V\md\tilde G\times \mathbb A^{n-1}$, and the last variety is isomorphic to 
$\sfr_{n}\oplus \sfr _{n-2}$ as $Q/\tilde Q$-module, i.e., $SL_2$-module. Therefore, 
$\q^*\md Q\simeq (\sfr_n\oplus \sfr_{n-2})\md SL_2$, which is not an affine space for $n\ge 3$. In other 
words, $\bbk[\q^*]^Q$ is not a polynomial ring for $n\ge 3$. For instance, it is a 
hypersurface for $n=3,4$, see e.g.~\cite[3.4.3]{springer}. 

\begin{rme}   \label{rmk:sravnenie-ker}
Since $\mathsf{g.s.}(\g:V)=\mathsf{g.s.}(\tilde\g:V)=\te_{n-1}$, we have
$\rk\Ker(\phi)=\rk\Ker(\tilde\phi) =n-1$ by Eq.~\eqref{eq:rk-ker-hat-phi}.  
Moreover, because $\tilde H=\gig(\tilde G:V)$ is abelian and connected, we also get
$\rk\Ker(\tilde\phi)=\rk\Ker(\tilde\phi_{\tilde G})$. 
\\ \indent But the situation for $\phi$ and $\phi_G$ is different.
If $n=3,4$, then the component group
$H/\tilde H$ acts nontrivially on $\h$ and, actually, $\h^H=\{0\}$. Therefore, $\rk\Ker(\phi_{G})=0$.
On the other hand, if $n\ge 5$, then $\h^H=\h$, hence $\rk\Ker(\phi)=\rk\Ker(\phi_{G})$. 
However, even if $\Ker(\phi)$ and $\Ker(\phi_{G})$ have the same rank, the free generators of the former are not $G$-equivariant (they are only $\tilde G$-equivariant). In fact, we do not know the generators of the $\bbk[V]^G$-module
$\Ker(\phi_{G})$ if $n\ge 5$.
\end{rme}
\end{ex}

\begin{ex}   \label{ex:cubic-matr}
The case of $n=2$ in Example~\ref{ex:tri-SL} does not fit into the general picture with $n\ge 3$, so we 
consider it separately. Now $G=(SL_2)^3$ and $V=\vp\vp'\vp''$. This is a reduced $\vartheta$-group
(see Example~\ref{theta-2}) related to a $\BZ$-grading of $\GR{D}{4}$. Therefore $\ctc$ holds here.
We have $V\md G=\mathbb A^1$, $q(V\md G)=4$, and $\gig(G:V)\simeq \BT_2$. More precisely,
if the elements of a maximal torus 
\[
   \BT=\left\{\begin{pmatrix} t_1 & 0 \\ 0 & t_1^{-1} \end{pmatrix} ,
   \begin{pmatrix} t_2 & 0 \\ 0 & t_2^{-1} \end{pmatrix} ,
   \begin{pmatrix} t_3 & 0 \\ 0 & t_3^{-1} \end{pmatrix} \mid t_i\in \bbk^{\times} 
   \right\}\subset G
\]
are represented as triples $(t_1,t_2,t_3)$ , then $\gig(G:V)=
\{(t_1,t_2,t_3)\mid t_1t_2t_3=1\}$.
\\ \indent
The elements of $V$ can be regarded as cubic $2$-matrices with entries $a_{ijk}$, see Fig.~\ref{fig1}, where the $i$-th factor of $G$ acts along the $i$-th coordinate, $i=1,2,3$.
\begin{figure}[htbp]
\setlength{\unitlength}{0.035in}
\caption{A cubic 2-matrix}   \label{fig1}
\begin{center}
\begin{picture}(60,60)(0,0)
\put(-15,20){$M=$}   
\put(65, 20){$\in V=\vp\vp'\vp''$}
\multiput(0,0)(35,0){2}{\circle{10}}
\multiput(0,35)(35,0){2}{\circle{10}}
\multiput(15,15)(35,0){2}{\circle{10}}
\multiput(15,50)(35,0){2}{\circle{10}}

\multiput(5,0)(0,35){2}{\line(1,0){25}}
\multiput(20,15)(0,35){2}{\line(1,0){25}}
\multiput(0,5)(35,0){2}{\line(0,1){25}}
\multiput(15,20)(35,0){2}{\line(0,1){25}}

\multiput(3.5,3.5)(35,0){2}{\line(1,1){8}}
\multiput(3.5,38.5)(35,0){2}{\line(1,1){8}}

\put(-3.3,-.5){\small $a_{000}$}
\put(31.7,-.5){\small $a_{100}$}
\put(-3.3,34.5){\small $a_{001}$}
\put(31.7,34.5){\small $a_{101}$}
\put(11.7,14.5){\small $a_{010}$}
\put(46.7,14.5){\small $a_{110}$}
\put(11.7,49.5){\small $a_{011}$}
\put(46.7,49.5){\small $a_{111}$}
\end{picture}
\end{center}
\end{figure}
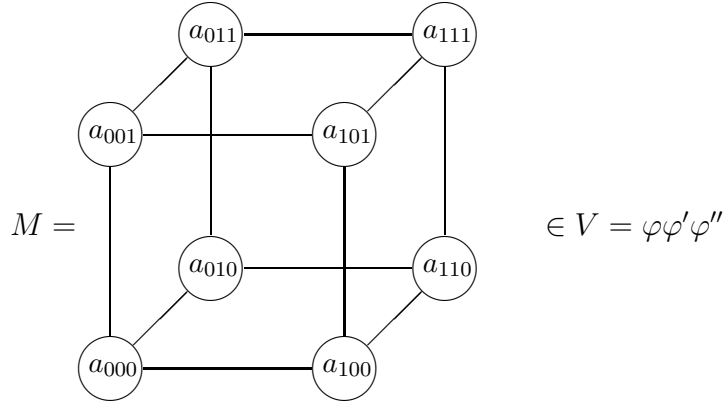

We provide below three morphisms from $V$ to $\tri$ that are thought of as morphisms to the consecutive 
factors of $\g$, where the column $\left[\begin{smallmatrix} m \\ n \\ p\end{smallmatrix}\right]$ represents 
the matrix $\bigl(\begin{smallmatrix} n & m \\ p & -n \end{smallmatrix}\bigr)$:
\begin{gather*}   
  \tilde F_1(M)=
  \begin{bmatrix} a_{111}a_{100}-a_{101}a_{110} \\
  a_{111}a_{000}+a_{011}a_{100} -a_{001}a_{110}-a_{101}a_{010}\\
  a_{011}a_{000}-a_{001}a_{010} 
  \end{bmatrix}  \\
  \tilde F_2( M)=
  \begin{bmatrix} a_{111}a_{010}-a_{011}a_{110} \\
  a_{111}a_{000}-a_{011}a_{100} -a_{001}a_{110}+a_{101}a_{010}\\
  a_{101}a_{000}-a_{001}a_{100} 
  \end{bmatrix}  \\
  \tilde F_3(M)=
  \begin{bmatrix} a_{111}a_{001}-a_{101}a_{011} \\
  a_{111}a_{000}-a_{011}a_{100} +a_{001}a_{110}-a_{101}a_{010}\\
  a_{110}a_{000}-a_{100}a_{010} 
  \end{bmatrix}
\end{gather*}
Letting $\mathbb F_{\lb,\mu,\nu}(M){=}(\lb \tilde F_1(M), \mu \tilde F_2(M), \nu \tilde F_3(M))$ with 
$\lb,\mu,\nu\in\bbk$, we 
obtain a 3-dimensional subspace of $\Mor(V,\g)$, and one verifies directly that 
$\mathbb F_{\lb,\mu,\nu}\in \Ker(\phi)$ if and only if $\lb+\mu+\nu=0$.
Then $F_1=\mathbb F_{\lb,-\lb,0}$ and $F_2=\mathbb F_{0,\mu,-\mu}$ satisfy~\eqref{eq:summa} and 
Theorems~~\ref{thm:main1} and \ref{thm:main2} apply. 
\\ \indent {\sl Hence $\Ker(\phi)$ (resp. $\Ker(\phi_G)$)
is a free\/ $\bbk[\vp\vp'\vp'']$-module (resp. $\bbk[\vp\vp'\vp'']^{(SL_2)^3}$-module) and $\q=(\tri)^3\ltimes \vp\vp'\vp''$ satisfies the Kostant 
criterion. Furthermore, using the explicit classification of $G$-orbits in $V$, one can prove that $\ctrc$ holds 
for $(G:V)$ and hence for $(Q:\q^*)$,  and also that
$\eus U(\q)$ is a free $\eus Z(\q)$-module.}
\end{ex}     

\begin{ex}  \label{gl-nk}
$G=\prod_{i=1}^k GL(\BU_i)$ and $V=\bigoplus_{i=1}^k \BU_i\otimes \BU_{i+1}^*$, where $\BU_{k+1}=\BU_1$.
\\ 
Assume that $\dim\BU_i=n$ for all $i$. Then $(G:V)$ is a $\vartheta$-group related to an 
automorphism of order $k$ of $\tilde\g=\mathfrak{gl}(\BV)=\mathfrak{gl}_{nk}$, where $V=\BU_1\oplus\dots\oplus\BU_k$. Namely, if $\varsigma=\sqrt[k]1$ and 
\[
  t=\mathsf{diag}(\underbrace{\varsigma^{k-1},\dots,\varsigma^{k-1}}_n, \dots,
  \underbrace{\varsigma,\dots,\varsigma}_n, \underbrace{1,\dots,1}_n 
   ) ,
\]
then $\vartheta=\Ad(t)$, $G=\tilde G_0$, and $V=\tilde \g_1$. In the matrix form, we have \\
\[ \g=\begin{pmatrix} \mathfrak{gl}(\BU_1) & 0 & \cdots & 0 \\
 0 & \mathfrak{gl}(\BU_2) &  & \\
  \vdots& & \ddots & \\
 0 &  & & \mathfrak{gl}(\BU_k) 
 \end{pmatrix},  \ \text{ and } \  
 \eus M=\begin{pmatrix} 0 & M_1 & \cdots & 0\\
 \cdots & 0 & \ddots & & \\
   & & \ddots & M_{k-1}\\
 M_k &  & \cdots & 0 
 \end{pmatrix} 
\]
is a typical element of $V=\tilde\g_1$. We also write $\eus M\doteqdot (M_1,\dots,M_k)$. 
Here $\dim\g=kn^2=\dim V$,  $\mathsf{g.s.}(\g:V)=\te_n$, and 
$V\md G\simeq \mathbb A^n$. The centre of $\tilde G=GL(\BV)$ belongs to $G$ and acts trivially on everything. Therefore, without any harm, we can replace $\tilde\g=\mathfrak{gl}_{nk}$ with 
$\mathfrak{sl}_{nk}$. But, it is notationally simpler to deal with $\mathfrak{gl}_{nk}$.

If $g_i\in GL(\BU_i)$, $\boldsymbol{g}=(g_1,\dots,g_k)\in G$, and $\eus M\doteqdot (M_1,\dots,M_k)$, then
the $G$-action on $V$ is given by 
\[   
   \boldsymbol{g}{\cdot}\eus M\doteqdot(g_1M_1g_2^{-1},g_2M_2g_3^{-1},\dots, g_kM_kg_1^{-1}) .
\]
Accordingly, for $\boldsymbol{s}=(s_1,\dots,s_k)\in \g$, we have
\beq     \label{eq:s:M}
   \boldsymbol{s}{\cdot}\eus M\doteqdot(s_1M_1-M_1s_2,\ s_2M_2-M_2s_3,\dots,\ s_k M_k-M_k s_1) .
\eeq
Vinberg's theory (Example~\ref{theta-1}) implies that here $\bbk[V]^G$ is a polynomial ring and 
$\N_G(V)$ contains finitely many $G$-orbits. But in this case, one can explicitly describe the basic invariants and thereby compute $q(V\md G)$. The representation $(G:V)$ is a quiver representation related
to the extended Dynkin quiver $\GRt{A}{nk-1}$, and the algebra $\bbk[V]^G$ is well known. But we prefer an
"elementary" invariant-theoretic point of view in our exposition.
\setcounter{rmke}{0}
\begin{thme}   \label{thm:gl-nk1}
The algebra $\bbk[V]^G$ is freely generated by the coefficients of the characteristic polynomial of the
matrix $M_1\cdots M_k$ (or any cyclic permutation of this product). In particular, the degrees of the basic invariants are $k,2k,\dots,nk$ and $\dim V-q(V\md G)= k\genfrac{(}{)}{0pt}{}{n}{2}$.
\end{thme}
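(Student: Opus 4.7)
The plan is to exploit the $\vartheta$-group structure: view $\eus M \in V = \tilde\g_1 \subset \tilde\g = \mathfrak{gl}_{nk}$ and pull the invariants back from Chevalley's description of $\bbk[\tilde\g]^{\tilde G}$.

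First I would verify that the functions $f_j := \tr\bigl((M_1 M_2 \cdots M_k)^j\bigr)$, $1 \le j \le n$, really are $G$-invariant. Indeed, from the $G$-action formula, under $\boldsymbol{g} = (g_1,\dots,g_k)$, the product $M_1 M_2 \cdots M_k$ transforms by conjugation $g_1 (M_1 \cdots M_k) g_1^{-1}$, since consecutive $g_i$'s cancel telescopically. The cyclic invariance of the trace then shows that replacing the product by any cyclic permutation yields the same function; in particular, the coefficients of the characteristic polynomial of $M_1 \cdots M_k$ are $G$-invariant, of degrees $k, 2k, \dots, nk$.

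The heart of the argument is a block computation. Write $\eus M$ as in the statement, so $\eus M^m$ is a block matrix whose non-zero blocks lie in positions $(i, i + m \bmod k)$. Hence $\tr(\eus M^m) = 0$ unless $k \mid m$, and for $m = jk$ the $(i,i)$-block of $\eus M^{jk}$ is the $j$-th power of the cyclic product $M_i M_{i+1}\cdots M_{i+k-1}$. By cyclicity of trace, all $k$ of these contribute equally, so
\[
   \tr(\eus M^{jk}) = k\, f_j \quad (1 \le j \le n), \qquad \tr(\eus M^m) = 0 \text{ if } k \nmid m.
\]
By Newton's identities, the coefficients of the characteristic polynomial $\det(\lambda\cdot I - \eus M)$ on $\tilde\g = \mathfrak{gl}_{nk}$ are polynomials in $\{\tr(\eus M^m)\}_{m=1}^{nk}$; restricted to $V$, they thus lie in $\bbk[f_1,\dots,f_n]$.

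To finish, I would invoke Vinberg's restriction theorem for $\vartheta$-groups: every $G$-invariant on $\tilde\g_1 = V$ is the restriction of a $\tilde G$-invariant on $\tilde\g$. Since $\bbk[\tilde\g]^{\tilde G}$ is generated by the coefficients of the characteristic polynomial of $\eus M$, the computation above yields $\bbk[V]^G \subseteq \bbk[f_1,\dots,f_n]$. For algebraic independence, specialise to $M_2 = \cdots = M_k = I$: then $f_j$ becomes $\tr(M_1^j)$, and these are algebraically independent functions of $M_1 \in \mathfrak{gl}(\BU_1)$. Equivalently, $\bbk[V]^G$ is polynomial of Krull dimension $n$ by Vinberg's theorem, and $n$ algebraically independent generators in a polynomial ring of dimension $n$ must freely generate it. The degrees being $k,2k,\dots,nk$ gives
\[
   q(V \md G) = k + 2k + \cdots + nk = k\,\tfrac{n(n+1)}{2},
\]
and hence $\dim V - q(V\md G) = kn^2 - k\tfrac{n(n+1)}{2} = k\binom{n}{2}$.

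The only non-routine step is the invocation of Vinberg's restriction theorem that makes the surjection $\bbk[\tilde\g]^{\tilde G} \twoheadrightarrow \bbk[V]^G$ available; everything else is book-keeping with the block matrix and Newton's identities.
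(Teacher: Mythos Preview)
Your approach is genuinely different from the paper's. The paper never touches the ambient $\tilde G$-invariants: instead it takes the quotient in two steps, first by $G'=\prod_{i\ge 2}GL(\BU_i)$ and then by $GL(\BU_1)$. The First Fundamental Theorem (or Igusa's lemma) gives $V\md G'\simeq \mathsf{Mat}_n(\bbk)$ via $\eus M\mapsto M_1\cdots M_k$, and the residual $GL(\BU_1)$-action on $\mathsf{Mat}_n$ is conjugation; hence $\bbk[V]^G=\bbk[\mathsf{Mat}_n]^{GL_n}$ is freely generated by the characteristic polynomial coefficients. This is shorter and uses only classical invariant theory, with no appeal to the $\vartheta$-group machinery beyond the setup.

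Your route through the $\vartheta$-group structure is workable, and the block computation $\tr(\eus M^{jk})=k f_j$ is clean, but two points need care. First, the sentence beginning ``Equivalently'' is not an equivalent argument: $n$ algebraically independent homogeneous elements in a graded polynomial ring of Krull dimension $n$ need \emph{not} generate it (e.g.\ $x^2,xy$ in $\bbk[x,y]$), so that clause by itself proves nothing. Second, the surjectivity of the restriction $\bbk[\tilde\g]^{\tilde G}\to\bbk[\tilde\g_1]^{G}$ is true here, but it is not what is usually packaged as ``Vinberg's restriction theorem''; Vinberg's theorem identifies $\bbk[\tilde\g_1]^{G}$ with $\bbk[\mathfrak c]^{W_{\mathfrak c}}$. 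The surjectivity follows once one checks that the little Weyl group on a Cartan subspace $\mathfrak c\simeq\bbk^n$ is exactly $G(k,1,n)=S_n\ltimes(\BZ_k)^n$ (so its invariants are the power sums in $d_j^k$, which are precisely what the $\tr(\eus M^{jk})$ restrict to), or alternatively from Vinberg's result that $G$-conjugacy and $\tilde G$-conjugacy coincide on $\tilde\g_1$ together with a normality argument. Either way a line of justification is needed; as written, the key surjection is asserted rather than proved.
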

\begin{proof}
Using the {\sf First Fundamental Theorem of Invariant Theory} or the
Igusa lemma~\cite[Theorem\,4.12]{VP}, one readily verifies that the quotient of $V$ by $G'=\prod_{i=2}^k GL(\BU_i)$ is given by the mapping $\eus M\mapsto M_1\cdots M_k\in \mathsf{Mat}_n(\bbk)$. Since
$g_1{\cdot}(M_1\cdots M_k)=g_1M_1\cdots M_kg_1^{-1}$, the induced action of $GL(\BU_1)=G/G'$ on 
$V\md G'\simeq \mathsf{Mat}_n(\bbk)$ is equivalent to the adjoint representation.
\end{proof}
Define the morphism $F_i\in \Mor(V,\g)$ by $F_i(\eus M)=\eus M^{ki}$ (the usual matrix power in
$\mathfrak{gl}_{nk}$).

\begin{thme}   \label{thm:gl-nk2}
We have 
\begin{itemize}
\item[\sf (i)] \ each $F_i$ is $G$-equivariant, lies in $\Ker(\phi)$, and $\sum_{i=0}^{n-1}\deg F_i=k\genfrac{(}{)}{0pt}{}{n}{2}$;
\item[\sf (ii)] \  For $Z=\{ \eus M\in V\mid \bigwedge_{i=0}^{n-1}F_i(\eus M)=0\}$, we have
$\codim_VZ\ge 2$. 
\end{itemize}
\end{thme}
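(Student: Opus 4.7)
The plan for part (i) is to embed everything into $\tilde\g = \mathfrak{gl}_{nk}$ via the periodic $\BZ/k$-grading from Example~\ref{theta-1}, so that $V = \tilde\g_1$, $\g = \tilde\g_0$, and the $G$-action on $V$ is the restriction of conjugation by the block-diagonal subgroup $\mathsf{diag}(g_1,\ldots,g_k) \subset GL_{nk}$. Since this grading is respected by matrix multiplication, $\eus M^{ki} \in \tilde\g_0 = \g$, so $F_i$ takes values in $\g$. Conjugation commutes with taking matrix powers, which immediately gives $G$-equivariance; and since the $\g$-action on $V$ is the restriction of the commutator in $\tilde\g$ (this is what \eqref{eq:s:M} says block-by-block), we have $F_i(\eus M)\cdot\eus M = [\eus M^{ki}, \eus M] = 0$, so $F_i \in \Ker(\phi)$. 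The degree count $\sum_{i=0}^{n-1} ki = k\binom{n}{2}$ is then immediate.

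For part (ii), I first describe $F_i$ explicitly: setting $P_j = M_j M_{j+1}\cdots M_{j-1}$ (cyclic product), the block-diagonal matrix $\eus M^{ki}$ has blocks $P_1^i,\ldots,P_k^i$. Hence $\eus M \in Z$ precisely when some nonzero polynomial of degree $< n$ annihilates every $P_j$ simultaneously, equivalently when the minimal polynomial of $\bigoplus_j P_j$ has degree $< n$. To show that $Z$ contains no divisor, I split $V = U \sqcup (V\setminus U)$, where $U = \{\det M_j \ne 0 \ \forall j\}$ and $V\setminus U = \bigcup_j D_j$ with $D_j = \{\det M_j = 0\}$; each $D_j$ is irreducible since $\det$ is an irreducible polynomial on $\mathsf{Mat}_n$. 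Any hypothetical irreducible divisor contained in $Z$ either meets $U$ in a divisor of $U$, or lies entirely in $V\setminus U$ and hence coincides with some $D_j$, so it suffices to rule out both possibilities.

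On $U$, the identity $M_j P_{j+1} = P_j M_j$ shows that all $P_j$'s are conjugate and thus share one minimal polynomial, so $Z\cap U$ is the preimage of the derogatory locus of $GL_n$ under the smooth surjection $U \to GL_n$, $\eus M \mapsto P_1$. Since the non-regular locus of $\mathfrak{gl}_n$ has codimension $3$ (by the example after Definition~\ref{def:codim2}), $Z \cap U$ has codimension $\ge 3$ in $U$. For each $D_j$ I exhibit a point of $D_j \setminus Z$ by taking $M_j = \mathsf{diag}(0,1,\ldots,n-1)$ and $M_i = I_n$ for $i\ne j$: then every $P_\ell = M_j$, which has $n$ distinct eigenvalues and is non-derogatory, so $F_0,\ldots,F_{n-1}$ are linearly independent at this point. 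The only mildly delicate step is verifying that $\eus M\mapsto P_1$ is a smooth surjection on $U$ (so that preimages of codimension-$\ge 2$ sets remain codimension $\ge 2$), but this is routine: fixing $M_2,\ldots,M_k\in GL_n$ reduces the map to right multiplication on $GL_n$ by an invertible element.
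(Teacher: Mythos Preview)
Your proof is correct. Part~{\sf(i)} is essentially the paper's argument, phrased a bit more conceptually: you invoke the associative $\BZ/k$-grading of $\mathsf{Mat}_{nk}$ directly to see that $\eus M^{ki}\in\tilde\g_0=\g$ and that $F_i(\eus M){\cdot}\eus M=[\eus M^{ki},\eus M]=0$, whereas the paper spells this out block by block via~\eqref{eq:s:M}.

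For part~{\sf(ii)} your route is genuinely different. The paper observes that $Z\subset\pi_{G'}^{-1}(Y)$, where $Y\subset\mathsf{Mat}_n$ is the derogatory locus and $\pi_{G'}:\eus M\mapsto M_{[1,k]}$, and then bounds $\codim\pi_{G'}^{-1}(Y)$ by exhibiting a concrete $2$-plane $\mathcal P\subset V$ (spanned by one $\eus M$ with a generic diagonal block and one with a regular nilpotent block) such that every nonzero point of $\mathcal P$ maps to a non-derogatory matrix. Your argument instead stratifies $V$ into the open set $U=\{\det M_j\ne 0\ \forall j\}$ and the irreducible boundary divisors $D_j=\{\det M_j=0\}$. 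On $U$ you identify $Z\cap U$ with the pullback of the derogatory locus under the smooth surjection $U\to GL_n$, which gives $\codim(Z\cap U)\ge 3$; and you rule out each $D_j$ by an explicit non-derogatory point with $\det M_j=0$. Your approach yields a slightly sharper bound on the open part and avoids the $2$-plane trick, at the cost of the extra case analysis on the boundary; the paper's $2$-plane argument is shorter and handles the singular $M_j$'s automatically (since $\eus M(2)$ already has a non-invertible block).
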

\begin{proof}
(i) It is clear from the definition that all $F_i$ are $G$-equivariant. Next, $\eus M^k$ is a block-diagonal 
matrix, where the first block is $M_{[1,k]}:=M_1\cdots M_k$ and the subsequent blocks are cyclic permutations of this product. The equality $F_1(\eus M){\cdot}\eus M=0$ readily
follows from this observation and \eqref{eq:s:M}. And likewise for $F_i$ ($i\ge 2$). The case of $i=0$ is obvious.
\\ \indent
(ii) We have the commutative diagram
\[
\xymatrix{ V\ar[r]^{\pi_{G'}}\ar[rd]^{\pi_G} & V\md G'\ar[d]^{\pi_{G/G'}}  \\ 
& V\md G\simeq \mathbb A^n} 
\qquad 
\xymatrix{ \eus M\ar@{|->}[r] & M_{[1,k]}\ar@{|->}[d]   \\ 
& \bigl(\sigma_1 (M_{[1,k]}),\dots, \sigma_n(M_{[1,k]})\bigr)
}
\]
If $M_{[1,k]}$ is a $G/G'$-regular (=\,{\sl non-derogatory}) matrix, then $\{F_i(\eus M)\}_{i=0}^{n-1}$ are 
linearly independent. Let $Y$ denote the variety of all derogatory matrices in $\mathsf{Mat}_n(\bbk)$.
Then $Z\subset \pi_{G'}^{-1}(Y)$, and it suffices to prove that $\codim \pi_{G'}^{-1}(Y)\ge 2$. Consider the matrices
$\eus M(1)\doteqdot (I_n,\dots,I_n,A)$ and $\eus M(2)\doteqdot (I_n,\dots,I_n,E,I_n)$, where
$A=\mathsf{diag}(a_1,\dots,a_n)$ with $a_i\ne a_j$ and $E={\small \begin{pmatrix} 0 & 1 & \cdots & 0\\
 \cdots & 0 & \ddots & & \\
   & & \ddots & 1\\
 0 &  & \cdots & 0 
\end{pmatrix}}$ is a regular nilpotent element of $\gln$. The plane
$\mathcal P=\{\ap\eus M(1)+\beta\eus M(2)\mid \ap,\beta\in\bbk\}$ 
has the property that, for any nonzero $\eus M\in \mathcal P$, the corresponding matrix $M_{[1,k]}$ is non-derogatory. Hence
 $\eus P\cap \pi_{G'}^{-1}(Y)=\{0\}$, and we are done.
\end{proof}
\end{ex}
\begin{rema} If we work with $\tilde G=SL(\BV)$ in place of $GL(\BV)$, then a generic stabiliser
becomes $\te_{n-1}$. Here the constant morphism $F_0$ should be omitted and the matrices 
$\eus M^{ki}$, $i\ge 1$, should be replaced with their projections to $\mathfrak{sl}(\BV)$.
\end{rema}
\noindent Thus, {\sl by Remark~\ref{rmk:about-ss} and Theorem~\ref{thm:gl-nk2}, the proof of
Theorems~\ref{thm:main1} and \ref{thm:main2} can be adjusted to the present case. 
Therefore, $\Ker(\phi)$ (resp. $\Ker(\phi_G)$) is a free $\bbk[V]$-module (resp. $\bbk[V]^G$-module) with basis $F_0,F_1,\dots,F_{n-1}$ and 
$\q=(\prod_{i=1}^k \mathfrak{gl}(\BU_i))\ltimes \bigl(\bigoplus_{i=1}^k \BU_i^*\otimes\BU_{i+1}\bigr)$ satisfies the Kostant criterion.}

It is worth noting that the special case of the involutive automorphism $\vartheta$ (i.e., if $k=2$) has already been settled in \cite[Sect.\,5]{coadj}.

\section{Constructing covariants for semi-direct products, II} 
\label{sect:primery2}

\noindent
\begin{ex}   \label{ex:sl-2-slag*}
$G=SL_n=SL(\BU)$, $V=\vp_1^2+ \vp_2^*
=\gS ^2(\BU)\oplus \wedge^2(\BU^*)$.
\\ \indent
We regard $V$ as the space of pairs of matrices: $V=\{(A,B)\mid A^t=A \ \& \ B^t=-B\}$, where the action of $g\in G$ is given by
\beq  \label{eq:pary-matriz1}
   g{\cdot}(A,B)=(gAg^t, (g^t)^{-1}Bg^{-1}) .
\eeq
and the corresponding action of $s\in\g=\sln$ is
\beq  \label{eq:pary-matriz2}
   s{\cdot}(A,B)=(sA+As^t, -s^tB-Bs) .
\eeq
In what follows, one has to distinguish the cases of even or odd $n$.
The algebra $\bbk[V]^G$ is (bi)graded polynomial and the (bi)degrees of the basic invariants 
are~\cite{ag79,gerry1}:

$\begin{cases}  (2,2), (4,4),\dots, (n-2,n-2), (n,0), (0, n/2),  & \text{ if } \ n=2k , \\
(2,2), (4,4),\dots, (n-1,n-1), (n,0) & \text{ if } \ n=2k+1 .
\end{cases}$
Here  the invariant of degree $(n,0)$ is $\det A$, and the invariant of degree $(0,n/2)$ is
$\text{Pf}\, B$. While the invariants of degree $(2i,2i)$ are just $\tr(AB)^{2i}$, $2i< n$.

A generic isotropy group is $H\simeq \BT_{[n/2]}$~\cite[Table\,2]{alela1}.
For instance, one can take
 
 $H=\begin{cases}
 \mathsf{diag}(t_1,\dots,t_k,    t_1^{-1},\dots, t_k^{-1})\mid t_i\in \bbk^\times \}, & \text{ if } \ n=2k \\
 \mathsf{diag}(t_1,\dots,t_k,1, t_1^{-1},\dots, t_k^{-1})\mid t_i\in \bbk^\times \}, & \text{ if } \ n=2k+1.
 \end{cases} $

\noindent We have to construct $[n/2]$ morphisms $\{F_i\}$ in $\Ker(\phi)$. To begin with, take
$F_1(A,B)=AB$. Since $(AB)^t=-BA$, we have $\tr(AB)=0$, and it follows from \eqref{eq:pary-matriz1} that  $g{\cdot}AB=g(AB)g^{-1}$. Hence $F_1\in\Mor_G(V,\g)$. We continue  by letting
$F_i(A,B)=(AB)^{2i-1}$, $i=1,2,\dots,[n/2]$. To ensure that the resulting matrix is traceless, we must consider only the odd powers of $AB$. Using \eqref{eq:pary-matriz2}, one verifies that
$F_i(A,B){\cdot}(A,B)=0$, hence $F_i\in \Ker (\phi)$. 
The corresponding $Q$-invariants in $\bbk[\q^*]$ are
$\hat F_i(\xi,A,B)=\tr(\xi(AB)^{2i-1})$.   Let $I_k(\bar d)$ denote the diagonal $k$ by $k$ matrix with diagonal entries $\bar d=(d_1,\dots,d_k)$. Taking
$A=\begin{pmatrix} 0_k & I_k(\bar d) \\ I_k(\bar d) & 0_k \end{pmatrix}$ and 
$B=\begin{pmatrix} 0_k & I_k(\bar c) \\ -I_k(\bar c) & 0_k \end{pmatrix}$ shows that the matrices
$(AB)^{2i-1}$, $1\le i\le k$, are linearly independent whenever the elements $\{c_jd_j\}$ are different.
Hence $F_1,\dots,F_k$ are linearly independent for $n=2k$. This construction can easily be adjusted to 
$n=2k+1$.

Having the degrees of all basic invariants and covariants, one verifies that 
$\sum_{i=1}^{[n/2]} \deg F_i+ q(V\md G)=\dim V$ if $n$ is odd; while for $n$ even
one obtains $\sum_{i=1}^{[n/2]} \deg F_i=\dim V- q(V\md G)+ [n/2]$.
Since the \ctc holds here (Example~\ref{ex:kosik1}($2^0$)), we have 
\\ \indent
{\sl if $n$ is odd, then the assumptions of Theorems~\ref{thm:main1} and \ref{thm:main2} 
are satisfied. Therefore, $\Ker(\phi)$ (resp. $\Ker(\phi_G)$) is a free $\bbk[V]$-module (resp. $\bbk[V]^G$-module) with basis $F_1,F_2,\dots,F_{[n/2]}$ and $\q:=\sln\ltimes (\vp_1^2+\vp_2^*)^*$ satisfies the Kostant criterion.}

{If $n$ is even, then the same conclusion is still true, but one have to modify the constructed covariants 
$\{F_i\}$ in order to obtain a new family such that Equality~\eqref{eq:summa} to be satisfied. This will appear in a forthcoming paper.}
\end{ex}

\begin{ex}   \label{ex:sl-2-slag}
If we slightly change $V$ of Example~\ref{ex:sl-2-slag}, i.e., take $G=SL_n$ and 
$\tilde V=\vp_1^2+\vp_2=\gS ^2(\BU)\oplus \wedge^2(\BU)$, 
then the action $(G:\tilde V)$  has similar properties. Namely, $\bbk[\tilde V]^G$ is polynomial~\cite{ag79,gerry1} 
and $\gig(G:\tilde V)=\BT_{[n/2]}$~\cite[Table\,2]{alela1}. However, the 
construction of covariants in $\Ker(\phi)$ becomes totally different and more involved.
We regard $\tilde V$ as the space of pairs of matrices: $\tilde V=\{(A,B)\mid A^t=A \ \& \ B^t=-B\}$, where the action of $g\in G$ is given by
\beq  \label{eq:pary-matriz3}
   g{\cdot}(A,B)=(gAg^t, gBg^t) .
\eeq
and the corresponding action of $s\in\g=\sln$ is
\beq  \label{eq:pary-matriz4}
   s{\cdot}(A,B)=(sA+As^t, sB+Bs^t) .
\eeq
Consider the ``characteristic polynomial''
\[
   \eus F(\lb)=\det(A+\lb B)=\sum_{i=0}^n f_i(A,B)\lb^i .
\]
Since $(A+\lb B)^t=A-\lb B$, we have $\eus F(\lb)=\eus F(-\lb)$, i.e., $\eus F(\lb)=P(\lb^2)$ and 
$f_i(A,B)\equiv 0$ unless $i$ is even.  If $n$ is odd, then $\bbk[\tilde V]^G$ is freely generated by the 
$f_{2i}$'s. For $n$ even, the only correction is that $f_{n}(A,B)=\det B$ should be replaced with $\text{Pf}\, B$~\cite{ag79,gerry1}. Therefore,
$\dim\tilde V-q(\tilde V\md G)=\begin{cases} 
2k^2-k, & \text{ if }\ n=2k 
\\ 2k^2+k, & \text{ if }\ n=2k+1
\end{cases}$. \par\noindent 
We provide below a construction of the required covariants in $\Ker(\phi)$. As in Example~\ref{ex:tri-SL}, let $A^*$ be the adjugate of $A$. Consider the morphism $F:\tilde V\to \gln$, 
$F(A,B)=BA^*$.  
\begin{lme}   \label{lm:1-for-ex-tilda-V}  We have
\begin{itemize}
\item[\sf (a)]   \ $\tr(BA^*)=0$, i.e., $F(A,B)\in\g=\sln$; 
\item[\sf (b)]   \ $F$ is $G$-equivariant;
\item[\sf (c)]   \ $F\in \Ker(\phi)$.
\end{itemize}
\end{lme}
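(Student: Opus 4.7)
The plan is to handle each of the three parts by elementary matrix manipulations, using only the classical adjugate identity $A A^\ast = A^\ast A = (\det A) I_n$, which holds as a polynomial identity in the entries of $A$, together with the observation that the adjugate commutes with transposition: $(A^t)^\ast = (A^\ast)^t$. In particular, since $A$ is symmetric, so is $A^\ast$.

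For part (a), I would note that $F(A,B) = BA^\ast$ is the product of an antisymmetric matrix $B$ and a symmetric matrix $A^\ast$. A standard argument using the cyclic invariance of trace and the identity $\tr(X) = \tr(X^t)$ then yields $\tr(BA^\ast) = \tr((BA^\ast)^t) = \tr(A^\ast(-B)) = -\tr(BA^\ast)$, forcing the trace to vanish.

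For part (b), the key auxiliary identity is
\[
   (g A g^t)^\ast = (g^t)^{-1} A^\ast g^{-1}, \qquad g \in SL_n.
\]
I would first verify this on the dense open subset where $A$ is invertible, using $A^\ast = (\det A)\, A^{-1}$ and $\det(g)=1$; since both sides are polynomial in $A$, the identity extends everywhere. Substituting into $F(gAg^t, gBg^t) = (gBg^t)(gAg^t)^\ast$ immediately gives $g(BA^\ast)g^{-1}$, which is the adjoint action of $g$ on $F(A,B) \in \sln$.

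For part (c), I would plug $s = F(A,B) = BA^\ast$ into the formulas \eqref{eq:pary-matriz4}. The first component becomes $BA^\ast A + A(BA^\ast)^t = BA^\ast A - A A^\ast B$, which vanishes identically by the adjugate identity $A A^\ast = A^\ast A = (\det A) I$. Likewise, the second component is $BA^\ast B + B(BA^\ast)^t = BA^\ast B - B A^\ast B = 0$, where now one uses antisymmetry of $B$ together with symmetry of $A^\ast$. No density argument is needed for (c) since these are polynomial identities. The only mild subtlety is bookkeeping with transposes in the symmetric/antisymmetric setup, but it is entirely routine; I do not expect any substantive obstacle.
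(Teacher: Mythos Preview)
Your proposal is correct and follows essentially the same approach as the paper's own proof: part (a) via the symmetry of $A^\ast$ and antisymmetry of $B$, part (b) via the adjugate identity on the dense locus of invertible $A$ followed by polynomial extension, and part (c) by direct substitution using $AA^\ast=A^\ast A=(\det A)I$ and $(BA^\ast)^t=-A^\ast B$. The only cosmetic difference is that you isolate the identity $(gAg^t)^\ast=(g^t)^{-1}A^\ast g^{-1}$ as a separate preliminary step, whereas the paper performs the computation inline.
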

\begin{proof}   (a) Since $A^t=A$, we have $(A^*)^t=A^*$. Hence $(BA^*)^t=-A^*B$. 
\\ \indent
(b) By definition, $F(g{\cdot}(A,B))=gBg^t\,(gAg^t)^*$. If $\det A\ne 0$, then the RHS equals
\[
   \det(gAg^t){\cdot}gBg^t{\cdot}(gAg^t)^{-1}=\det A{\cdot}gBA^{-1}g^{-1}=gBA^*g^{-1} .
\] 
Hence $F$ is a $G$-equivariant mapping from $\tilde V$ to $\g=\sln$ on the dense open subset of 
$\tilde V$, where $A$ is invertible. Since $F$ is a polynomial morphism, this holds on the whole of $\tilde V$.
\\ \indent
(c)  We have $F(A,B){\cdot}(A,B)=(BA^*A+A(BA^*)^t, BA^*B+B(BA^*)^t)=$ \\
\phantom{a} \hfill $(BA^*A-AA^*B, BA^*B-BA^*B)=0$.
\end{proof}
Having constructed one suitable covariant, we perform a ``polarisation''. Consider
\[
   \mathbb H_\lb(A,B):=F(A+\lb B, B)=B(A+\lb B)^*=\sum_{i=0}^{n-1} F_i(A,B)\lb^i .
\]
Clearly $F_0(A,B)=F(A,B)$ and if $n$ is odd, then $\det B=0$ and the coefficient of $\lb^{n-1}$ 
equals $BB^*=(\det B)I=0$.

\begin{thme}   \label{thm:sl-2-slag*} We have
\begin{itemize}
\item[\sf (a)]  \ $\tr F_{2i}(A,B)=0$ for all $i$;
\item[\sf (b)]  \ $\mathbb H_\lb$ is a $G$-equivariant mapping from $\tilde V$ to $\gln$. In particular, 
$F_{2i}\in \Mor_G(\tilde V,\g)$ for all $i$; 
\item[\sf (c)]  \ $F_{2i}\in \Ker(\phi)$ for all $i$.
\end{itemize}
\end{thme}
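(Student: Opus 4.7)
The plan is to prove (b) first by polarizing the argument used for $F_0=F$ in Lemma~\ref{lm:1-for-ex-tilda-V}(b), and then derive (a) and (c) from a single parity argument under $\lb\mapsto -\lb$. The central observation is that if we set $u_\pm(\lb):=(A\pm\lb B)^*$, so that $\mathbb H_\lb(A,B)=Bu_+(\lb)$, then the identity $(A+\lb B)^t=A-\lb B$ immediately gives $u_+(\lb)^t=u_-(\lb)$, or equivalently $u_+(-\lb)=u_+(\lb)^t$. This single transpose relation, together with $B^t=-B$, drives everything.

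For part (b), on the dense open subset of $\tilde V\times\bbk$ where $A+\lb B$ is invertible the computation from Lemma~\ref{lm:1-for-ex-tilda-V}(b) goes through verbatim with $A+\lb B$ in place of $A$, since it uses only $\det g=1$ and the adjugate formula for invertible matrices. This yields $\mathbb H_\lb(g{\cdot}(A,B))=g\,\mathbb H_\lb(A,B)\,g^{-1}$ on a dense subset, and polynomiality extends the identity everywhere; reading off coefficients of $\lb^i$ places every $F_i$ in $\Mor_G(\tilde V,\gln)$. For part (a), using $B^t=-B$ and cyclicity of trace gives $\tr\mathbb H_{-\lb}(A,B)=\tr(Bu_+(\lb)^t)=\tr(u_+(\lb)B^t)=-\tr(Bu_+(\lb))=-\tr\mathbb H_\lb(A,B)$, so $\tr\mathbb H_\lb$ is an odd polynomial in $\lb$ and the coefficients $\tr F_{2i}$ vanish; combined with (b) this upgrades each $F_{2i}$ to an element of $\Mor_G(\tilde V,\g)$.

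For part (c), compute both components of $\mathbb H_\lb(A,B)\cdot(A,B)$ via \eqref{eq:pary-matriz4}. The second component is immediate: $\mathbb H_\lb B+B\mathbb H_\lb^t=Bu_+B+Bu_+^tB^t=B(u_+-u_-)B$. The first component $Bu_+A-Au_-B$ requires the adjugate identities $Xu_\pm=u_\pm X=\det(X)I$ with $X=A\pm\lb B$ to move $A$ past $u_\pm$, together with $\det(A+\lb B)=\det(A-\lb B)$ to cancel the bulk terms; this collapses to $-\lb B(u_++u_-)B$. Now expand $u_+(\lb)=\sum_j u_j\lb^j$: the symmetry $u_+(-\lb)=u_+(\lb)^t$ forces $u_j^t=(-1)^j u_j$, so $u_++u_-=2\sum_{j\text{ even}}u_j\lb^j$ is even in $\lb$ while $u_+-u_-=2\sum_{j\text{ odd}}u_j\lb^j$ is odd. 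Hence both components of $\mathbb H_\lb\cdot(A,B)$ are odd polynomials in $\lb$, the coefficient of $\lb^{2i}$ vanishes, and therefore $F_{2i}\in\Ker(\phi)$.

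The main obstacle I anticipate is the algebraic reduction of the first component in (c) to $-\lb B(u_++u_-)B$: one has to apply the adjugate identity from both sides of $u_\pm$ and exploit the even dependence of $\det(A+\lb B)$ on $\lb$ to expose the cancellation. Once this reduction is in hand, parts (a) and (c) both follow from the same $\lb\mapsto -\lb$ parity bookkeeping, so the proof is essentially uniform and the structural input is concentrated in the single identity $u_+(-\lb)=u_+(\lb)^t$.
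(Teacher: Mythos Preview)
Your proof is correct and takes a genuinely different route from the paper's. The paper works on the dense open locus where $A$ and $A+\lb B$ are invertible, expands $B(A+\lb B)^*$ via geometric series in $\lb A^{-1}B$ and $\lb BA^{-1}$, and argues that the relevant traces vanish and that $(\mathbf{F1})+(\mathbf{F2})$ contains only odd powers of $\lb$ by inspecting those series and the evenness of $\det(A+\lb B)$; the identities are then extended by polynomiality. You instead isolate the single transpose identity $u_+(\lb)^t=u_+(-\lb)$, which is valid everywhere, and derive both (a) and (c) from pure $\lb\mapsto-\lb$ parity: part (a) becomes a one-line trace computation, and for (c) you reduce the two components to $-\lb B(u_++u_-)B$ and $B(u_+-u_-)B$ using only the universal adjugate identity $(A\pm\lb B)u_\pm=\det(A\pm\lb B)I$ together with $\det(A+\lb B)=\det(A-\lb B)$, with no invertibility hypothesis needed. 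Your argument is tighter and more uniform; the paper's expansion has the minor advantage of making the individual matrix entries more visible, but at the cost of passing through a dense-subset argument twice.
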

\begin{proof}
(a) If both $A$ and $A+\lb B$ are invertible, then 
\begin{multline*}
B(A+\lb B)^*=\det(A+\lb B){\cdot}B(A+\lb B)^{-1} \\
=\det(A+\lb B){\cdot}B\bigl(A(I+\lb A^{-1}B)\bigr)^{-1}= 
\det(A+\lb B){\cdot}B(I+\lb A^{-1}B)^{-1}A^{-1}= \\
\det(A+\lb B){\cdot}\bigl(BA^{-1}+\lb (BA^{-1})^2+\lb^2(BA^{-1})^3+\dots\bigr)
\end{multline*}
Since $A$ is symmetric, so is $A^{-1}$ and therefore $(BA^{-1})^{2i+1}$ is a product of a symmetric and a 
skew-symmetric matrices. Hence $\tr(BA^{-1})^{2i+1}=0$. As $\det(A+\lb B)=P(\lb^2)$, the total 
coefficient of $\lb^{2i}$ is a traceless matrix. Since this is true for a dense open subset of triples
$(A,B,\lb)$ such that $A$ and $A+\lb B$ are invertible, and $\mathbb H_\lb$ is a polynomial mapping, this holds for all triples.
\\ \indent
(b) The proof is the same as in Lemma~\ref{lm:1-for-ex-tilda-V}(b).
\\ \indent
(c)   We prove that $\mathbb H_\lb^{even}:=\sum_{i}F_{2i}\lb^{2i}\in \Ker(\phi)$ for all $\lb$. Equivalently, 
only odd powers of $\lb$ survive in $\mathbb H_\lb(A,B){\cdot}(A,B)$. By definition, 
\[
  \mathbb H_\lb(A,B){\cdot}(A,B)=(B(A{+}\lb B)^*A{+}A(B(A{+}\lb B)^*)^t, B(A{+}\lb B)^*B{+}B(B(A{+}\lb B)^*)^t).
\]
Let us transform the first component in the RHS. Again, assuming first that $A$ and $A+\lb B$ are invertible, one obtains:
\begin{multline*}
(\mathbf{F1})=
B(A{+}\lb B)^*A=\det(A{+}\lb B)B(A+\lb B)^{-1}A\\
=\det(A{+}\lb B)B\bigl(A(I{+}\lb A^{-1} B)\bigr)^{-1}A
=\det(A{+}\lb B)B(I{+}\lb A^{-1} B)^{-1}\\
=\det(A{+}\lb B)(B{-}\lb(BA^{-1}B){+}\lb^2(BA^{-1}BA^{-1}B)-\dots) 
\end{multline*}
and
\begin{multline*}  (\mathbf{F2})=
A(B(A{+}\lb B)^*)^t=-\det(A{+}\lb B)(A(A{-}\lb B)^{-1}B)\\
=-\det(A{+}\lb B)A\bigl((I{-}\lb BA^{-1})A\bigr)^{-1}B 
=-\det(A{+}\lb B)(I{-}\lb BA^{-1})^{-1}B \\
=-\det(A{+}\lb B)(B{+}\lb(BA^{-1}B){+}\lb^2(BA^{-1}BA^{-1}B)+\dots)
\end{multline*}
Because $\det(A{+}\lb B)=P(\lb^2)$, the sum $(\mathbf{F1}){+}(\mathbf{F2})$ contains only odd powers of 
$\lb$. Again, using the polynomiality of $\mathbb H_\lb$, we conclude that this property holds for all 
$A,B,\lb$.
\\  \indent
The argument for the second component is similar.
\end{proof}
\noindent
Thus, we have constructed $[n/2]$ covariants $F_{2i}$ ($0\le 2i \le n-2$) in $\Ker(\phi)$. These 
covariants are linearly independent, because their bi-degrees are different. Since $\deg F_{2i}=n$ 
for all $i$, we have
$\sum_{i=0}^{[n/2]-1}\deg F_{2i}-\dim V+q(V\md G)=0$ if $n$ is odd (and $=[n/2]$ if $n$ is even).
Since the \ctc holds here (Example~\ref{ex:kosik1}($1^o$)), we conclude that 
\\ \indent
{\sl if $n$ is odd, then the assumptions of Theorems~\ref{thm:main1} and \ref{thm:main2} are satisfied. Therefore, $\Ker(\phi)$ (resp. $\Ker(\phi_G)$) is a free $\bbk[V]$-module (resp. $\bbk[V]^G$-module) 
with basis $F_0,F_2,\dots,F_{2[n/2]-2}$ and $\q:=\sln\ltimes (\vp_1^2+\vp_2)^*$ satisfies the Kostant 
criterion.}

{If $n$ is even, then the same conclusion is still true, but one have to modify the constructed covariants 
$\{F_i\}$ in order to obtain a new family such that Equality~\eqref{eq:summa} to be satisfied. This will 
appear in a forthcoming paper.}
\end{ex}

\begin{ex}  \label{ex:sp-so}
$G=Sp(\BU)\times SO(\BV)$, $V=\BU\otimes \BV$. \\ 
\indent
This representation is a $\vartheta$-group associated with an outer automorphism of 
order $4$ of $\mathfrak{sl}(\BU\oplus\BV)$. Therefore  $\bbk[V]^G$ is polynomial and
$\N_G(V)$ contains finitely many $G$-orbits, cf. Example~\ref{theta-1}. Furthermore, a generic stabiliser is 
reductive if and only if either $\dim \BU\ge \dim \BV$ or $\dim\BV-\dim\BU\in 2\BN$. In these cases, the 
action is stable and hence \ctc holds.

Set $2m=\dim \BU$ and $n=\dim \BV$.
Let $\gI$ (resp. $\gJ$) be a symmetric (resp. skew-symmetric) matrix of order $n$ (resp. $2m$) such 
that $\gI^2=I$ (resp. $\gJ^2=-I$).
We regard $SO(\BV)$ (resp. $Sp(\BU)$) as the group that preserves the bilinear form with matrix $\gI$ (resp. $\gJ$). Then 
\begin{equation}   \label{eq:so-sp}
\begin{array}{c} 
\mathfrak{sp}(\BU)=\mathfrak{sp}_{2m}=\{X\in \mathsf{Mat}_{2m} \mid 
X^t\gJ+\gJ X=0\}=\{X \mid (\gJ X)^t=\gJ X\};   \\[.6ex]
\mathfrak{so}(\BV)=\mathfrak{so}_{n}=\{Y\in \mathsf{Mat}_n 
\mid Y^t\gI+\gI Y=0\}=\{Y \mid (\gI Y)^t=-\gI Y\} .
\end{array}
\end{equation}
We identify $\BU\otimes \BV$ with the space of $2m$ by $n$ matrices, where the action of 
$\mathfrak{sp}(\BU)\times \mathfrak{so}(\BV)$  is 
given by $(s_1,s_2){\cdot}M =s_1M -M s_2$. Here a generic isotropy group is a torus if and only if $0\le \dim \BV-\dim \BU\le 2$ (more precisely, if $n\ge 2m$, then $\gig(G:V)\simeq SO_{n-2m}\times \BT_m$).
The corresponding possibilities are considered below.
(Whenever it is convenient, we may assume that $\gI=I$; and then $\mathfrak{so}(\BV)$ consists of the usual skew-symmetric matrices.)

(i)  
\un{Assume that $\dim \BU=\dim \BV=2m$}. Here $\gig(G:V)=\BT_m$ and this torus is embedded diagonally in $Sp(\BU)\times SO(\BV)$.
The degrees of the basic invariants of $\bbk[V]^G$ are 
$4,8,\dots, 4m-4, 2m$~\cite{litt89}. Hence $\dim V-q(V\md G)=4m^2-2m-2m(m-1)=2m^2$.

Define the covariant $F_1: \BU\otimes \BV  \to \mathsf{Mat}_n\times \mathsf{Mat}_n$
by $F_1(M )=(M\gI M^t \gJ, \gI M ^t \gJ M )$. Using Eq.~\eqref{eq:so-sp}, one verifies that 
$F_1(M )\in \g=\mathfrak{sp}(\BU)\times \mathfrak{so}(\BV)$. Moreover,  
$F_1$ is $G$-equivariant, and 
$F_1(M ){\cdot}M =0$, i.e., $F_1\in \Ker(\phi)$. 
If a matrix $R$ is either symplectic or orthogonal, then so is $R^{2i-1}$ for any $i$. Therefore, the 
covariants 
\[
   F_i(M)=F_1(M)^{2i-1}=\bigl( (M\gI M^t \gJ)^{2i-1}, (\gI M ^t \gJ M)^{2i-1}\bigr)  .
\] 
are well-defined. Moreover, $F_1,\dots,F_m$ are linearly 
independent. (Assuming for simplicity that $\gI=I$, one easily verifies that $F_1(D),\dots, F_m(D)$ are 
linearly independent for a generic diagonal matrix $D$.) 
Here $\deg F_i=2(2i-1)$. Hence 
$\sum_{i=1}^m\deg F_i=2m^2$, so that \eqref{eq:summa} holds.
Thus, Theorems~\ref{thm:main1} and  \ref{thm:main2} apply here. 

(ii)   
\un{Assume that $\dim \BV=2m+1=\dim \BU+1$}. Here again $\gig(G:V)=\BT_m$ 
and this torus is embedded diagonally in $Sp(\BU)\times SO(\BV)$. The degrees of the basic 
invariants of $\bbk[V]^G$ are $4,8,\dots , 4m$~\cite{litt89}. Hence 
$\dim V-q(V\md G)=2m(2m+1)-2m(m+1)=2m^2$, as in (i). The formulae for
$F_i$, $i=1,\dots,m$, also remain the same.  Note only that now $\gI$ and $\gJ$ have different order
and therefore the matrices $M\gI M^t \gJ$ and $\gI M ^t \gJ M$ are of order $2m$ and $2m+1$, 
respectively.

(iii)  
\un{Assume that $\dim \BV=2m+2=\dim \BU+2$}. Here  $\gig(G:V)=\BT_{m+1}$, but only an $m$-dimensional subtorus is embedded diagonally in $G$, whereas a complementary 
$1$-dimensional torus belongs to $SO(\BV)$. (This is not surprising, since $\rk Sp(\BU)=m$.)
The degrees of the basic invariants of $\bbk[V]^G$ are
$4,8,\dots, 4m$, as in (ii). Hence $\dim V-q(V\md G)=2m(2m+2)-m(2m+2)=2m^2+2m$.

As in (i), we construct the linearly independent covariants $F_1,\dots,F_m$ with 
$\sum_{i=1}^m\deg F_i=2m^2$, but this is not 
sufficient now. These $m$ covariants take a generic $G$-regular element $M\in V$ to the diagonally 
embedded $m$-dimensional torus in the stabiliser $\g_M\simeq \te_{m+1}$.
We need one more covariant (of degree $2m$) that takes $M$ to a 1-dimensional subtorus sitting 
only in $\mathfrak{so}(\BV)$. In other words, starting with a $2m$ by $2m+2$ matrix $M$, we wish 
to construct, in a natural way, a skew-symmetric matrix of order $2m+2$. Here is the solution:
Let $M_{ij}$ be the square matrix of order $2m$ obtained by deleting the $i$-th and $j$-th columns 
from $M$, $1\le i<j\le 2m+2$. 
We then set 
\\
$a_{ij}=\begin{cases}   (-1)^{i+j}\det M_{ij}, &  \ \text{ if } \ i<j \\
0, &  \ \text{ if } \ i=j \\
-a_{ji}, &  \ \text{ if } \ i>j .
\end{cases}$.  Clearly, $A_M=(a_{ij})$ is a skew-symmetric matrix of order $2m+2$, and we define
$F_{m+1}(M)=(0, A_M)\in \mathfrak{sp}(\BU)\times \mathfrak{so}(\BV)$. It is easily seen that $F_{m+1}$ is equivariant, $\deg F_{m+1}=2m$, and 
$F_{m+1}(M){\cdot}M=(0,-MA_M)=0$. 
Thus, 
\\ \indent 
{\sl if\/ $0\le \dim\BV-\dim\BU\le 2$, then 
the assumptions of Theorems~\ref{thm:main1} and \ref{thm:main2} 
are satisfied. Therefore, $\Ker(\phi)$ (resp. $\Ker(\phi_G)$) is a free $\bbk[V]$-module (resp. $\bbk[V]^G$-module) with basis $\{F_i\}$ and
$\q=(\mathfrak{sp}(\BU)\times \mathfrak{so}(\BV))\ltimes (\BU\otimes \BV)^*$
satisfies
the Kostant criterion.}
\end{ex}  

\begin{ex}   \label{ex:so-mnogo}
$G=SO(\BV)=SO_{n+2}$ and $V=n\BV$, the sum of $n$ copies of the defining representation of 
$SO_{n+2}$. 
\\ \indent
Here $\gig(G:V)=SO_2\simeq \BT_1$,
$V\md G\simeq \mathbb A^{(n+1)n/2}$, and $q(V\md G)=(n+1)n$. 
The explicit construction of the unique covariant of degree $\dim V-q(V\md G)=n$ in 
$\Ker(\phi)$ is similar to the construction of $F_{m+1}$ in Example~\ref{ex:sp-so}(iii).
We regard an element of $V$ as $n+2$ by $n$ matrix $M$ and consider its minors of order $n$,  
$\det M_{ij}$, where $1\le i< j\le n+2$. Then $F(M)=(a_{ij})$, where $a_{ij}=(-1)^{i+j}\det M_{ij}$ for 
$i<j$, etc. 
\end{ex}

\appendix
\section{Tables of representations with toral generic stabilisers}  
\label{sect:tables}

\noindent
Using classification results of Elashvili~\cite{alela1,alela2}, one can write down the arbitrary 
representations of simple algebraic groups or the irreducible representations of semisimple groups 
whose generic stabiliser is toral. The subsequent four tables include {\bf all} such representations.
But their content is not disjoint. Recall that $\q=\g\ltimes V^*$ and we are interested in the symmetric invariants of $\q$.
\\  \indent
In Table~\ref{table-te1}, we gather all representations with $1$-dimensional generic stabiliser.
The column {\sf (FA)} (resp. ({\sf Eq)}) refers to the presence of the property that $\bbk[V]^G$ is a polynomial ring (resp.
$\pi_{V,G}:V\to V\md G$ is equidimensional). This information is inferred from tables in \cite{ag79, litt89, gerry1, gerry2}.
Results of Section~\ref{sect:codim2} imply that \ctc holds for all these representations. By Theorem~\ref{thm:l=1}, $\bbk[\q^*]^{R_u(Q)}$ is a polynomial ring for all items of this table. Furthermore, if ({\sf FA})
holds, then $\bbk[\q^*]^{Q}$ is a polynomial ring. Finally, if ({\sf Eq}) holds, then $\eus U(\q)$ is a free module over $\eus Z(\q)$.
\begin{table}[h]
\caption{Representations $(G:V)$ with $\mathsf{g.s.}(\g:V)=\te_1$}   \label{table-te1}
\begin{center}
\begin{tabular}{>{\sf}c<{}|>{$}c<{$}>{$}c<{$}>{$}c<{$}>{$}c<{$}>{$}c<{$}cc}
 {\rus N0}&  G & V & \dim V & \dim V\md G & q(V\md G) &   ({\sf FA}) & ({\sf Eq})    \\ \hline \hline
1a & SO_{n+2}{\times} SO_{n} & \vp_1\vp'_1 & n(n{+}2) & n & n(n{+}1) &  {\it yes} &  {\it yes}\\ 
1b & SO_{n+2} & n\vp_1 & n(n{+}2) & \genfrac{(}{)}{0pt}{}{n+1}{2} & n(n{+}1) &  {\it yes} & {\it yes}, 
if $n{\le} 3$ \\[.5ex] \hline
2 & SL_6 & 2\vp_2{+}\vp^*_2 & 45 & 11 & 36 &  {\it yes} &  {no} \\ \hline
3a & SL_6{\times} SL_3 & \vp_2\vp'_1 & 45 & 3 & 36 &  {\it yes} & {\it yes} \\  
3b & SL_6{\times} SL_2 & \vp_2\vp'^2 & 45 & 8 & 36 &  no & no \\
3c & SL_6 & 3\vp_2 & 45 & 11 & 36 &  {\it yes} &  {no}\\ \hline
4a & Sp_6{\times} SL_2 & \vp_2\vp' & 28 & 5 & 22 &  no & no \\
4b & Sp_6 & 2\vp_2 & 28 & 8 & 22 &  {\it yes} & {\it yes}\\ \hline
5 & SL_4{\times} SL_2 & \vp_2\vp'^3 & 24 & 7 & 20  &  no & no\\  
\hline
\end{tabular}
\end{center}
\end{table}

{\small {\bf Remarks.}  1) In {\rus N0}\,1(a,b), we have $V\md SO_{n+2}\simeq S^2\vp_1'$ as $SO_n$-module.

2) \ In {\rus N0}\,3(a,b,c), we have $V\md SL_6\simeq \sfr_6+\sfr_2+\sfr_0$ as $SL_2$-module.

3) \ In {\rus N0}\,4(a,b), we have $V\md Sp_6\simeq \sfr_3+\sfr_2+\sfr_0$ as $SL_2$-module.

4) \ In {\rus N0}\,5, we have $V\md SL_4\simeq \wedge^2(\sfr_3)=\sfr_6+\sfr_2$ as $SL_2$-module.
\\
This explains why $\bbk[V]^G$ is not polynomial in 3b, 4a, 5.}


\begin{table}[h]
\caption{$\vartheta$-groups with toral generic stabiliser and their ``restrictions''}   \label{table-2}
\begin{center}
\begin{tabular}{c | >{$}c<{$} >{$}c<{$} >{$}c<{$} >{$}c<{$} c} 
 {\rus N0} &  G_0 & \g_1 & \h & \vartheta  & Ref.\\  \hline\hline 
1 & SO_{2m}{\times} Sp_{2m} & \vp_1\vp_1' & \te_m & (\GR{A}{4m-1}^{(2)}, 4) & Example~\ref{ex:sp-so}(i) \\[1ex]
2 & SO_{2m+1}{\times} Sp_{2m} & \vp_1\vp_1' & \te_m & (\GR{A}{4m}^{(2)}, 4) & Example~\ref{ex:sp-so}(ii)\\[1ex]
3 & SO_{2m+2}{\times} Sp_{2m} & \vp_1\vp_1' & \te_{m+1} 
&(\GR{A}{4m+1}^{(2)},4) & Example~\ref{ex:sp-so}(iii) \\[1ex]
4 & (SL_n)^k\times \BT_{k-1} & \displaystyle\sum_{i=1}^n \vp_1^{(i)}\vp_{n-1}^{(i+1)} & \te_{n-1} & (\GR{A}{kn-1}, k) &
Example~\ref{gl-nk} \\[.5ex]
5 & SL_4{\times} SL_4{\times} SL_2 & \vp_1\vp_1'\vp'' & \te_3 & (\GR{E}{7}, 4) & Example~\ref{ex:tri-SL} \\[1ex]
6 & SL_6{\times} SL_3 & \vp_2\vp'_1 &  \te_1 & (\GR{E}{7}, 3) &  Example~\ref{ex:perexod} \\[.5ex]  
6a & SL_6 & 3\vp_2 &  \te_1 & - &  Example~\ref{ex:perexod} \\[.5ex]  
7 & SL_6{\times} SL_2 & \vp_3\vp' & \te_2 & (\GR{E}{6}, 2) & \cite[Sect.\,5]{coadj} \\ 
7a & SL_6 & 2\vp_3 & \te_2 & - & \\  
8 & SO_{n+2}{\times} SO_{n} & \vp_1\vp'_1 & \te_1 & (\GR{D}{n+1}, 2) & \cite[Sect.\,5]{coadj} \\  
8a & SO_{n+2} & n\vp_1 &  \te_1 & - & Example~\ref{ex:so-mnogo} \\
\hline
\end{tabular}
\end{center}
\end{table}

In Table~\ref{table-2}, we gather all representations with a toral generic stabiliser that are 
$\vartheta$-groups in the sense of Vinberg (Example~\ref{theta-1}) and related restrictions.
Namely, items 6a, 7a, and 8a (which are not  $\vartheta$-groups!)  are obtained from the genuine 
$\vartheta$-groups (items 6,7,8) by omitting the second factor of $G_0$ and we say that these are 
"restrictions" of $\vartheta$-groups. It appears that this passage does not change generic isotropy groups, 
which are always contained in the first factor of $G_0$. Moreover, the number $q(\g_1\md G_0)$ is not 
affected, too. Therefore the the covariants $\{F_i\}$ produced for these $\vartheta$-groups, as described in
the respective examples,  are also 
suitable for their ``restrictions''.
The symbol $(\GR{X}{n}^{(k)}, m)$ in column ``$\vartheta$'' represents the following information on the 
automorphism $\vartheta$ of $\g$. Here $\GR{X}{n}$ is the Cartan type of $\g$, $m$ is the order of 
$\vartheta$, and $k$ is the minimal integer such that $\vartheta^k$ is inner (this number is omitted if it 
equals $1$).
\begin{table}[h]
\caption{Reduced $\vartheta$-groups  with toral generic stabilisers}   \label{table-3}
\begin{center}
\begin{tabular}{c | >{$}c<{$} >{$}c<{$} >{$}c<{$} |>{$}c<{$} |c} 
 {\rus N0} &  G(0)' & \g(1) & \h & \BZ\text{-grading of}\ \g  & Ref.\\  \hline\hline 
1 & SL_2{\times} SL_2{\times} SL_2 & \vp\vp'\vp'' & \te_2 & (\GR{D}{4}, \ap_2) 
\begin{picture}(75,23)(0,7)
\setlength{\unitlength}{0.015in}
\put(35,18){\circle*{5}} 
\put(35,3){\circle{5}}
\multiput(20,18)(15,0){3}{\circle{5}}
\multiput(23,18)(15,0){2}{\line(1,0){9}}
\put(35,6){\line(0,1){9}}
\end{picture}
&
Example~\ref{ex:cubic-matr} \\
2 & SL_3{\times} SL_3{\times} SL_2 & \vp_1\vp'_1\vp'' & \te_2 & (\GR{E}{6}, \ap_3) 
\begin{picture}(75,28)(0,7)
\setlength{\unitlength}{0.015in}
\put(35,18){\circle*{5}} 
\put(35,3){\circle{5}}
\multiput(5,18)(15,0){5}{\circle{5}}
\multiput(8,18)(15,0){4}{\line(1,0){9}}
\put(35,6){\line(0,1){9}}
\end{picture}
& 
Example~\ref{ex:tri-SL} \\[.5ex]  \hline
\end{tabular}
\end{center}
\end{table}

\noindent
The theory of $\vartheta$-groups implies that, for all items of Tables~\ref{table-2} and \ref{table-3}, 
$\bbk[V]^G$ is a polynomial ring. Our description of the corresponding covariants shows that, for all items
except {\rus N0}5 in Table~\ref{table-2} and {\rus N0}2 in Table~\ref{table-3},
$\bbk[\q^*]^Q$ is a polynomial ring, the Kostant criterion is always satisfied for $\q$, and $\Ker(\phi)$ is 
a free $\bbk[V]$-module generated by $G$-equivariant morphisms. The explicit construction of covariants 
$F_i\in\Ker(\phi)$ is given in the examples mentioned in the column ``Ref.'' 
\begin{table}[h]
\caption{The remaining representations with toral generic stabiliser}   \label{table-4}
\begin{center}
\begin{tabular}{ >{\sf}c<{} | >{$}c<{$} >{$}c<{$} >{$}c<{$} cc c} 
 {\rus N0} &  G & V & \h & Rem.  & ({\sf FA}) & Ref.\\  \hline\hline 
1 & SL_n &  \vp_1^2+\vp_2^* & \te_{[n/2]} & $n\ge 4$ 
& {\it yes} & Example~\ref{ex:sl-2-slag*} \\[.5ex]
2 & SL_n &  \vp_1^2+\vp_2 & \te_{[n/2]} &  $n\ge 4$ & {\it yes} & Example~\ref{ex:sl-2-slag}\\ 
3 & SL_n{\times} SL_n{\times} SL_2 & \vp_1\vp_1'\vp'' & \te_{n-1} & $n\ge 5$ & no & Example~\ref{ex:tri-SL}  \\ 
3a & SL_n{\times} SL_n & \vp_1\vp_1'+\vp_1\vp'_1 & \te_{n-1} & $n\ge 3$ & {\it yes} & Example~\ref{ex:tri-SL}  \\ 
4 & SL_8 &  \vp_3+\vp_7 &  \te_2 & - 
& {\it yes} & \\
5 & SL_8 &  \vp_3+\vp_1 &  \te_2 & - & {\it yes} & \\
6 & Sp_4{\times} SO_7 & \vp_1\vp'_3 & \te_2 & - &  {\it yes} & \\
\hline
\end{tabular}
\end{center}
\end{table}

\noindent
The column {\sf (FA)} in Table~\ref{table-4} refers to the presence of the property that $\bbk[V]^G$ is a 
polynomial ring. For items {\sf 1,2,3a}, $\bbk[\q^*]^Q$ is a polynomial ring, while in case {\sf 3}, only 
$\bbk[\q^*]^{R_u(Q)}$ is a polynomial ring. (A more precise information can be found in the respective examples.)
\\ \indent
We do not know whether it is possible to construct  covariants $F_1,F_2\in \Ker(\phi)$ for 
items 4-6 of Table~\ref{table-4} such that $\deg F_1+\deg F_2=\dim V -q(V\md G)$ and whether the $\bbk[V]$-module $\Ker(\phi)$ is free or $\bbk[\q^*]^Q$ is 
a polynomial ring in these cases. Nevertheless, using Theorem\,2.8 in \cite{Y16}, Remark~\ref{rem:iz-kota}, 
and the fact that $H=\gig(G:V)\simeq \BT_2$ is connected, one can prove that there do exist certain linearly 
independent $G$-equivariant morphisms $F_1, F_2\in \Ker(\phi)$. However, this existence assertion says
nothing about their degrees.


\end{document}